\newtheorem{theorem}{Theorem}[section]
\newtheorem{definition}[theorem]{Definition}
\newtheorem{proposition}[theorem]{Proposition}
\newtheorem{lemma}[theorem]{Lemma}
\theoremstyle{remark}
\newtheorem*{remark}{Remark}
\begin{document}

	\title[minimal degree and base size]{On the minimal degree and base size of finite primitive groups}

	\author[F. Mastrogiacomo]{Fabio Mastrogiacomo}
	\address{Fabio Mastrogiacomo, Dipartimento di Matematica ``Felice Casorati", University of Pavia, Via Ferrata 5, 27100 Pavia, Italy} 
	\email{fabio.mastrogiacomo01@universitadipavia.it}
	\subjclass[2010]{primary 20B15}
	\keywords{Base size, minimal degree, primitive groups, almost simple, Lie group}

	\maketitle
	\begin{abstract}
		Let $G$ be a finite permutation group acting on $\Omega$. A base for $G$ is a subset $B \subseteq \Omega$ such that the pointwise stabilizer $G_{(B)}$ is the identity. The base size of $G$, denoted by $b(G)$, is the cardinality of the smallest possible base. The minimal degree of $G$, denoted by $\mu(G)$, is the smallest cardinality of the support of a non trivial element of $G$. In this paper, we establish a new upper bound for $b(G)$ when $G$ is primitive, and subsequently prove that if $G$ is a primitive group different from the Mathieu group of degree $24$, then $\mu(G)b(G)\leq n \log n$, where $n$ is the degree of $G$. This bound is best possible, up to a multiplicative constant.
	\end{abstract}
	\section{introduction}
	Let $G$ be a finite permutation group acting on a set $\Omega$. The \textit{\textbf{minimal degree}} of $G$, denoted by $\mu(G)$, is the smallest number of elements of $\Omega$ that are moved by any non-identity element of $G$. A base for $G$ is a subset $B \subseteq \Omega$ such that $G_{(B)}=1$. The \textit{\textbf{base size}} of $G$, denoted by $b(G)$, is the smallest cardinality of a base. These two invariants are among the most important ones studied in permutation group theory. In particular, the base size plays a crucial role in the investigation of many properties of the group $G$. Many authors have consequently studied this invariant and provided  upper bounds for it. On one side, the famous Pyber's conjecture, now a theorem (see \cite{BS} and \cite{DHM}), gives an upper bound in term of the size of the group and the degree. On the other side, some upper bound has been given for the base size depending only on the degree of the group. Some results have also been obtained regarding the minimal degree, as seen in \cite{BG} and \cite{KPS}. However, it appears that these two ingredients  have not been studied together. Considering, for example, their product, the only known result about it is the following elementary lower bound, which can be found in \cite[Excercise~$3.3.7$]{dixon_mortimer}. If $G$ is a transitive group of degree $n$, then
	\[
		n \leq b(G)\mu(G).
	\]
	The main topic explored in this paper is the existence of a function $f :\mathbb{N} \to \mathbb{R}$ satisfying $\mu(G) b(G) \leq f(n)$.\\
	As an initial approach to this problem, one might consider leveraging existing results on the base size of primitive groups. For example,  using the classification of finite simple groups, Liebeck \cite[Theorem~$1$]{Lieb} has proved that if $G$ is a primitive group of degree $n$, which is not large-base (see Section~\ref{sec2:sub3} for the definition of large-base group), then $b(G) \leq 9 \log n$ (in this paper, all the logarithms are taken in base $2$).  Subsequently, this result was improved by Moscatiello and Roney-Dougal in \cite[Theorem~$5$]{MRD}; their result proves that, if $G$ is not large-base, then $b(G) \leq 2+\log n$. Using these results, one may establish inequalities of the form 
	\[
		\mu(G) b(G) \leq 9n\log n,
	\]
	or 
	\[
		\mu(G) b(G) \leq n(2+\log n).
	\]
	However, bounds of this form are far from sharp and do not apply to large-base groups. The main result of this paper presents an improvement on these earlier inequalities and provides a uniform bound.
	\begin{theorem} \label{mainthm:1}
		Let $G$ be a primitive permutation group of degree $n$. If $G$ is not the Mathieu group $M_{24}$ in its action of degree $24$, then
		\[
			\mu(G) b(G) \leq n\log n.
		\]
	\end{theorem}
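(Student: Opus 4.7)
The plan is to proceed via the O'Nan--Scott classification of primitive groups, handling the affine, almost simple, simple diagonal, product action, and twisted wreath types in turn. The trivial lower bound $n \leq b(G)\mu(G)$ noted in the introduction already shows the desired inequality is tight up to a logarithmic factor, so every argument must be calibrated carefully to achieve the clean constant $1$ in front of $n \log n$.

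First I would address the non-large-base almost simple case, where the theorem of Moscatiello and Roney-Dougal already provides $b(G) \leq 2 + \log n$. Since trivially $\mu(G) \leq n$, this yields $\mu(G) b(G) \leq n(2 + \log n)$, which exceeds the target by roughly $2n$. To close this gap, the key observation is that if $\mu(G)$ is close to $n$ then few non-identity elements have many fixed points, and a direct probabilistic argument bounding the number of non-bases of size $k$ by $\sum_{g \neq 1} |\mathrm{Fix}(g)|^k$ yields a strictly better estimate on $b(G)$; this is presumably the new upper bound for $b(G)$ announced in the abstract. In the complementary regime $\mu(G) \leq n \log n / (2 + \log n)$ the Moscatiello--Roney-Dougal bound already suffices directly.

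Next I would treat the large-base groups, that is, subgroups of $S_m \wr S_r$ acting on $k$-subsets or partitions of a product set. These require direct computation: for the $k$-subset action of $S_m$ one has $\mu(S_m) = 2\binom{m-2}{k-1}$ (the displacement of a transposition) and an estimate of the form $b(S_m) \leq \log m / \log(m/k) + O(1)$; multiplying and comparing to $\binom{m}{k}\log \binom{m}{k}$ yields the bound, with analogous calculations in the wreathed case. For the remaining O'Nan--Scott types (affine, simple diagonal, product, twisted wreath) I would combine known upper bounds on $b(G)$ from the Burness-style literature with structural bounds on $\mu(G)$; for affine groups in particular $\mu(G)$ is controlled by the smallest non-trivial orbit of the linear part on the underlying vector space.

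The main obstacle is controlling the nearly-tight regime where the constant must be exactly $1$. The Mathieu group $M_{24}$ on $24$ points satisfies $\mu(M_{24})b(M_{24}) = 16 \cdot 7 = 112$, while $24 \log_2 24 \approx 110.04$, which is precisely why it must be excluded. Verifying that no other sporadic or small-degree exception intrudes, and that no borderline large-base parameter choice pushes $\mu(G)b(G)$ above $n\log n$, will require a finite computer check of primitive groups of small degree dovetailed with the asymptotic estimates above some threshold; this detailed case analysis, rather than any single conceptual step, is expected to constitute the bulk of the work.
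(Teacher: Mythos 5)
Your high-level strategy --- improve the base-size bound for most primitive groups, treat the tight cases separately, and finish with a computer check at small degree --- matches the shape of the paper's argument, but the technical core is missing. The paper's engine is the new bound $b(G)\le \frac{1}{2}\log n+6$ (Theorem~\ref{mainthm:2}), proved by an exhaustive O'Nan--Scott and subspace-action analysis; this gives $\mu(G)b(G)\le \frac{1}{2}n\log n+6n\le n\log n$ for $n\ge 2^{12}$ at once, with GAP covering $n\le 4095$. You instead guess that the announced new bound is a probabilistic one: if $\mu(G)$ is close to $n$, bound the non-bases by $\sum_{g\ne 1}|\mathrm{Fix}(g)|^k$. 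That argument yields $b(G)\lesssim \log|G|/\log(n/f)$, where $f$ is the maximal number of fixed points, and to conclude $b(G)\le \log n$ you would need $|G|\le (n/f)^{\log n}$. You do not establish this, and it is precisely in the standard (subspace) actions and the affine groups over $\mathbb{F}_2,\mathbb{F}_3$ --- the cases the paper isolates in Table~\ref{tab:md} and Section~\ref{sec2} --- that $f$ is of order $n/2$ or $n/3$ while $\log|G|$ is much larger than $\log n$, so neither branch of your dichotomy closes without exactly the kind of case-by-case work the paper carries out (explicit elements realizing $\mu(G)$, e.g.\ products of reflections in the orthogonal cases, together with sharp base-size estimates such as $b(G)\le\log n$).

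Two further concrete slips. For large-base groups the socle is $(A_{m,k})^r$, so the only non-trivial elements guaranteed to lie in $G$ are even permutations; the correct upper bound is $\mu\le 3\binom{m-2}{k-1}$ from a $3$-cycle (Lemma~\ref{lemmaAltSubsets}), not $2\binom{m-2}{k-1}$ from a transposition. Since the theorem is tight up to small constants (witness $M_{24}$), such constants cannot be fudged. For affine groups $G=V{:}H$, the minimal degree is $|V|$ minus the largest fixed-point \emph{subspace} of a non-trivial element of $H$, i.e.\ $q^d-q^t$; it is not controlled by the smallest non-trivial orbit of $H$ on $V$, and the paper's argument for $q=2$ hinges on the interplay between that $t$ and the resulting base $\{v_1,\dots,v_{t+1}\}$ of size $t+2$. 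Finally, your computer-check threshold is unspecified; in the paper it is dictated by the constant $6$ in Theorem~\ref{mainthm:2}, which your outline never derives.
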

	Observe that if $G = M_{24}$ is the Mathieu group of degree $24$, then $b(G) = 7$ and $\mu(G) = 16$, so $b(G) \mu(G) = 112$, while $n \log n \approx 110$.\\
	Our proof of Theorem~\ref{mainthm:1} heavily relies on bounds on the base size of primitive groups, since more information is known about it. In particular, we found it necessary to derive new bounds for the base size of certain primitive groups. The bound we have established to prove Theorem~\ref{mainthm:1}, which is of independent interest, improves upon the result of Moscatiello and Roney-Dougal and is stated in the following theorem.
	\begin{theorem}\label{mainthm:2}
		Let $G$ be a finite primitive group of degree $n$, and suppose that $G$ is not large-base. Suppose moreover that $G$ is neither an almost simple group with socle $G_0$ acting on the set $\Omega$, with $(G_0,\Omega)$ as in Table \ref{tab:md}, nor a subgroup of $\mathrm{AGL}_d(q)$, with $q=2,3$. Then,
		\[
			b(G) \leq \frac{1}{2}\log n +6.
		\]
	\end{theorem}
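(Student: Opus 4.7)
The plan is to follow the O'Nan--Scott classification of primitive groups and verify the inequality $b(G) \le \frac{1}{2}\log n + 6$ in each type, using the stated exclusions to rule out precisely the cases where it would fail.

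I would first handle the almost simple case. The base sizes of almost simple primitive groups have been determined, or tightly bounded, in a long sequence of works by Burness and collaborators, covering sporadic, alternating, classical and exceptional families. In nearly every non large-base action one obtains $b(G) \le 7$, and the finite list of exceptional actions where $b(G)$ can exceed $\frac{1}{2}\log n + 6$ is exactly what is collected in Table~\ref{tab:md}. For the remaining small-degree actions where the asymptotic bound is close to sharp, I would verify the inequality directly from the known exact values of $b(G)$.

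For affine $G = V \rtimes G_0$ with $V = \mathbb{F}_q^d$ and $q \ge 4$, a base for $G$ is obtained from the zero vector together with a base for $G_0 \le \mathrm{GL}_d(q)$ on $V\setminus\{0\}$; hence $b(G) \le b(G_0) + 1$. Invoking the available bounds on base sizes of linear groups (of Halasi--Podoski, Liebeck--Shalev, and Burness--Guralnick--Saxl), one obtains $b(G_0) \le d + c$ for a small absolute constant $c$. Since $d = \log_q n \le \frac{1}{2}\log n$ whenever $q \ge 4$, the conclusion follows; this is exactly why $q\in\{2,3\}$ must be excluded from the statement.

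For the remaining families (simple diagonal, twisted wreath, and product action), Fawcett's bounds give an absolute-constant bound in the first two, trivially implying the inequality. The product action case $G \le H \wr S_k$ acting on $\Delta^k$ with $|\Delta| = m$ and $n = m^k$ is the main obstacle. One constructs a base for $G$ out of a base for $H$ on $\Delta$ together with a small set of elements that separate the $k$ coordinates, yielding something like $b(G) \le b(H) + \lceil \log_m k\rceil + O(1)$. Since $H$ is almost simple (or diagonal) and inherits the non large-base and non-exceptional hypotheses, the almost simple bound $b(H) \le \frac{1}{2}\log m + 6$ can be fed back in, and combined with $\log n = k\log m$ gives the desired estimate. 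The hardest part will be propagating the exclusions correctly through the wreath product and controlling the boundary cases where $m$ or $k$ is small --- in particular when $|\Delta|\in\{5,6,7\}$, where $\log m$ is only barely above $2$ and the saving factor is tight.
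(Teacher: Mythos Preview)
Your outline has a substantial gap in the almost simple case. You write that ``in nearly every non large-base action one obtains $b(G)\le 7$'', with only a short exceptional list remaining. That is true only for \emph{non-standard} actions. The standard actions that are not large-base --- the action of $\mathrm{Alt}(m)$ or $\mathrm{Sym}(m)$ on uniform partitions, and the subspace actions of classical groups --- have base size growing with the rank: for instance $b(\mathrm{PSL}_d(q))$ on $P_1$ is roughly $d$, not bounded by $7$. These are exactly the actions where the inequality $b(G)\le\frac{1}{2}\log n+6$ is delicate, and the paper devotes the bulk of Section~\ref{sec1:sub1} to them: separate propositions for alternating-on-partitions, and then one proposition per family ($\mathrm{PSL}$, $\mathrm{PSU}$, $\mathrm{PSp}$, $\mathrm{P}\Omega^\varepsilon$, $\mathrm{P}\Omega$) on each of $S_k$, $N_k$, etc., using the explicit bounds of \cite{HLM} and \cite{MRD} together with careful degree estimates and some computer checks for small parameters. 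This is not a matter of tidying up constants; without this case analysis you have no argument for the standard almost simple actions at all, and Table~\ref{tab:md} arises precisely as the residue of this analysis where $\frac{1}{2}\log n+6$ genuinely fails.

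There is a second, smaller gap in your product-action step. You want to feed the inductive bound $b(H)\le\frac{1}{2}\log m+6$ back in, asserting that the component $H$ ``inherits the non large-base and non-exceptional hypotheses''. The non-large-base part is fine, but there is no reason $H$ cannot lie in Table~\ref{tab:md}: nothing about $G\le H\wr S_k$ being outside the table forces $H$ outside it. The paper sidesteps this by not using the inductive bound at all; instead it invokes the Moscatiello--Roney-Dougal bound $b(H)\le \log m+2$ (valid for every non-large-base almost simple $H$, including those in Table~\ref{tab:md}) together with~\eqref{basesizewrpr}, and checks that this weaker input still suffices for $b(G)\le\frac{1}{2}\log n^k+6$.
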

	The bound in Theorem~\ref{mainthm:2} is asymptotically best possible. For instance, consider the affine group $G = \mathrm{AGL}_d(4)$. The base size of this group is $d+1$\footnote{In general, $b(\mathrm{AGL}_d(q)) = d+1$. The stabilizer of a point, that we may suppose to be the zero vector, is $\mathrm{GL}_d(q)$ acting on the non-zero vectors, and a base for this action is a basis for the vector space.}, while $n = 4^d$, and $d+1 \sim (\log n)/2+6$. Although the constant $6$ could potentially be reduced in certain cases, it is sufficient for our purposes to keep it as $6$.\\ 
	Regarding the sharpness of the bound in Theorem~\ref{mainthm:1}, this is asymptotically optimal up to a multiplicative constant. Indeed, there are infinitely many primitive groups $G$ of degree $n$ such that $\mu(G) b(G) \geq (n \log n)/2$. For example, take $G = \mathrm{AGL}_d(2)$ in its natural action of degree $2^n$. Here, we have $\mu(G) = n/2$ (see for example \cite[Excercise~$3.3.3$]{dixon_mortimer}) and $b(G) = 1+\log n$, so that $\mu(G) b(G) > (n \log n)/2$. \\	
	
	The proof of Theorem~\ref{mainthm:2} is carried out through a case by case analysis using the O'Nan-Scott Theorem.\\
	For what regards Theorem~\ref{mainthm:1}, if $G$ is a primitive group of degree $n$ that is not among the exceptions listed in Theorem~\ref{mainthm:2}, the inequality $\mu(G) b(G) \leq n \log n$ will easily follow from Theorem~\ref{mainthm:2}. In cases where $G$ is one of the exceptions listed in Theorem~\ref{mainthm:2}, the result is established by examining the exceptions listed in Table~\ref{tab:md}.\\

	The paper is organized as follows. In Section~\ref{sec1} we prove Theorem~\ref{mainthm:2}. In particular, in Subsection~\ref{sec1:sub1}, we prove Theorem~\ref{mainthm:2} for almost simple groups. In Subsection~\ref{sec1:sub2} we prove Theorem~\ref{mainthm:2} for diagonal groups. In Subsection~\ref{sec1:sub3} we prove Theorem~\ref{mainthm:2} for affine groups. In Subsection~\ref{sec1:sub4} we prove Theorem~\ref{mainthm:2} for product type groups. \\
	Section~\ref{sec2} is devoted to the proof of Theorem~\ref{mainthm:1}. In particular, in Subsection~\ref{sec2:sub1} we prove Theorem~\ref{mainthm:1} for affine groups with underlying field with $2$ or $3$ elements. In Subsection~\ref{sec2:sub2} we prove Theorem~\ref{mainthm:1} for almost simple groups having socle $G_0$ acting on $\Omega$, with $(G_0,\Omega)$ as in Table~\ref{tab:md}. In Subsection~\ref{sec2:sub3} we prove Theorem~\ref{mainthm:1} for large-base groups.
		\begin{table}
			\[
			\begin{array}{lllll} \hline
				G_0 & \Omega & \mbox{Conditions} \\ \hline
				{\rm PSL}_{d}(q) & P_1  & d \geq 3, q=2,3   \\
				\hline  
				{\rm PSp}_d(q) & P_1 & d \geq 6, q=2,3 \\
				&[G: N_G(\mathrm{GO}_d^\pm(q))] & d \geq 6, q=2 \\
				\hline 
				\mathrm{P}\Omega_d^\varepsilon(q) & S_1 & d \geq 8, q=2,3 		\\	
				 & N_1 &  d \geq 8, q=2,3 		\\	
				 \hline 
				\mathrm{P}\Omega_d(q) & S_1 & d \geq 7, q=3 \\
				& N_{d-1} & d \geq 7, q=3\\
				\hline
			\end{array}
			\]
			\caption{The subspace actions  of Theorem \ref{mainthm:2}}
			\label{tab:md}
		\end{table}\\

	\noindent {\bf Acknowledgements.} {\sl This work was conducted during a visit at the Alfréd Rényi Institute of Mathematics in Budapest. The author expresses gratitude to Attila Maróti for numerous helpful discussions on the topic and to the Rényi Institute for its hospitality.\\
	\noindent The author is a member of the GNSAGA INdAM research group and kindly acknowledges their support.}
	\section{Proof of Theorem~\ref{mainthm:2}}\label{sec1}
	In this section, we prove Theorem~\ref{mainthm:2}. We divide the proof according to the O'Nan-Scott theorem.
	\subsection{Almost simple groups}\label{sec1:sub1}
	Let $G$ be an almost simple group with socle $G_0$ in a primitive action of degree $n$. From now on, we will use the following notations for actions of classical groups.\\

	\noindent {\bf Notation.} {\sl
		Let $G$ be a classical group, with natural module $V$. We write $P_k$ to refer to the set of all $k$-dimensional subspaces of $V$. Moreover, we write $S_k$ for a $G$-orbit of totally singular subspaces of $V$ of dimension $k$, and $N_k$ for a $G$-orbit of non-degenerate or
		non-singular subspaces of $V$ of dimension $k$. For the orthogonal groups of even dimension, let $W$ be a space in the orbit if $dk$ is even, and the orthogonal complement of such a space if $dk$ is odd. Then we write $N_k^\epsilon$, with
		$\epsilon\in\{+,-\}$ to indicate that the restriction of the form to $W$ is of type $\epsilon$. \\
		We let $e_1,\ldots,e_d$ be the canonical basis of $V=\mathbb{F}_q^d$.\\
	}

	Before embarking on the proof of Theorem~\ref{mainthm:2}, we first recall the definition of subspace subgroup.
	\begin{definition}\label{defintionsubspacesubgrp}
		{\rm
			Let $G$ be an almost simple classical group over $\mathbb{F}_q$, where $q=p^f$ and $p$ is prime, with socle $G_0$ and associated natural
			module $V$. A subgroup $H$ of $G$ not containing $G_0$ is a \textit{\textbf{subspace subgroup}} if, for each maximal
			subgroup $M$ of $G_0$ containing $H \cap G_0$, one of the following holds:
			\begin{enumerate} 
				\item\label{definition6.2:1} $M$ is the stabilizer in $G_0$ of a proper nonzero subspace $U$ of $V$, where $U$ is totally
				singular, non-degenerate, or, if $G_0$ is orthogonal and $p = 2$, a nonsingular $1$-space ($U$ can be any subspace if $G_0 = \mathrm{PSL}(V )$).
				\item\label{definition6.2:2} $G_0 = \mathrm{Sp}'_{d} (q)$, $p = 2$, and 
				$M = \mathrm{GO}^\pm_{d} (q)$.
			\end{enumerate}
		}
	\end{definition}
	A \textbf{\textit{subspace action}} of the classical group $G$ is the action of $G$ on the coset space $[G : H]$,
	where $H$ is a subspace subgroup of $G$.
	
	\begin{definition}\label{definition6.3}
		{\rm
			A transitive action of $G$ on a set $\Omega$ is said to be standard if, up to
			equivalence of actions, one of the following holds:
			\begin{enumerate}
				\item $G_0 = \mathrm{Alt}(m)$ and $\Omega$ is an orbit of subsets or uniform partitions of $\{1, \ldots , m\}$.
				\item $G$ is a classical group in a subspace action.
			\end{enumerate}
		}
	\end{definition}
	For an almost simple primitive permutation group in a non-standard action, the base
	size is bounded by an absolute constant. This was conjectured by Cameron and Kantor
	(see~\cite{Cam92,CK97}) and then settled in the aﬃrmative by Liebeck and Shalev in~\cite[Theorem~1.3]{LS99}. The constant was then made explicit in subsequent work~\cite{BGS11,BLS09,BOW10,Bur07,Bur2018}. The following theorem summarizes these results.
	
	\begin{theorem}\label{thrm:6.4} Let $G$ be a finite almost simple group in a primitive faithful non-standard
		action with socle $G_0$. Then, $b(G) \le 7$, with equality if and only if $G$ is the Mathieu group $M_{24}$ in its natural action of degree $24$. 
	\end{theorem}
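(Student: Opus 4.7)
The plan is to assemble the statement from the previously published works on base sizes of non-standard primitive almost simple groups, partitioning by the isomorphism type of the socle $G_0$ via the Classification of Finite Simple Groups. Since Liebeck and Shalev \cite{LS99} already showed that $b(G)$ is bounded by an absolute constant in the non-standard case, the remaining task is to extract the explicit value $7$, and the uniqueness of the extremal example, from the case-by-case refinements in \cite{BGS11,BLS09,BOW10,Bur07,Bur2018}.

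First I would dispose of the sporadic socles using Burness, O'Brien, and Wilson \cite{BOW10}, who compute $b(G)$ exactly for every almost simple group with sporadic socle in every primitive action; from their tables one reads off that $b(G) = 7$ precisely when $G$ is $M_{24}$ acting on $24$ points and $b(G) \leq 6$ otherwise. Next, for an alternating socle $G_0 = \mathrm{Alt}(m)$ in a non-standard action (so the action is neither on subsets nor on uniform partitions of $\{1,\ldots,m\}$), I would cite \cite{BGS11}, which yields $b(G) \leq 2$ away from a small explicit list, all of which are verified by direct computation to satisfy $b(G)\leq 7$. For classical socles in non-subspace actions I would invoke Burness, Liebeck, and Shalev \cite{BLS09}, and for exceptional groups of Lie type the papers \cite{Bur07,Bur2018}; each of these gives $b(G)\leq 6$ outside a few low-rank, small-field configurations that are handled by machine computation recorded in the same references.

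The main obstacle, and the historically hardest part, is the classical non-subspace case: the possible actions proliferate with rank and field size, and uniform bounds of the form $b(G)\leq 6$ are proved by probabilistic fixed-point-ratio estimates of the type pioneered in \cite{LS99} and sharpened in \cite{BLS09}. The strategy there is to show that the expected number of regular orbits of $G$ on ordered $b$-tuples from $\Omega$ is positive, by establishing
\[
	Q(G,b) := \sum_{x \in G, \, x \neq 1} \frac{|\mathrm{fix}_\Omega(x)|^b}{|\Omega|^b} < 1,
\]
via bounds on conjugacy class sizes and on fixed-point ratios $|\mathrm{fix}_\Omega(x)|/|\Omega|$ coming from the subgroup structure of classical groups. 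Combining the four case outputs yields $b(G)\leq 7$ uniformly, and comparing with the explicit tables of \cite{BOW10} pins down $M_{24}$ in degree $24$ as the unique group attaining equality.
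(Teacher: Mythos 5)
Your proposal is correct and matches the paper's treatment: the paper offers no independent proof of this theorem, stating explicitly that it merely summarizes the explicit constants obtained in \cite{BGS11,BLS09,BOW10,Bur07,Bur2018} following the Liebeck--Shalev resolution of the Cameron--Kantor conjecture, which is exactly the case-by-case assembly you describe. The only quibble is a swapped attribution: \cite{Bur07} treats the classical groups and \cite{BLS09} the exceptional groups of Lie type, not the other way around.
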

	With this strong result, we can easily prove Theorem~\ref{mainthm:2} for almost simple groups in a non-standard action. 
	\begin{proposition}
		Let $G$ be an almost simple group, equipped with a non-standard action of degree $n$. Then, $b(G) \leq (\log n)/2 + 6$.
	\end{proposition}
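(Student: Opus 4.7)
The plan is to reduce the proposition to the absolute bound provided by Theorem~\ref{thrm:6.4} and then verify a trivial numerical inequality. First, I would note that since $G$ is an almost simple group in a non-standard action, the hypotheses of Theorem~\ref{thrm:6.4} apply verbatim, so I may conclude that $b(G)\le 7$, with equality if and only if $G=M_{24}$ in its natural action of degree $24$.

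The argument then splits into two cases. If $b(G)\le 6$, the required inequality $b(G)\le (\log n)/2+6$ is immediate, since $(\log n)/2\ge 0$ whenever $n\ge 1$. If instead $b(G)=7$, the equality clause of Theorem~\ref{thrm:6.4} forces $G=M_{24}$ and $n=24$, and a direct computation gives $(\log 24)/2+6 > 2+6 = 8 > 7$, since $\log 24 > 4$, so the bound still holds.

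There is essentially no obstacle here: the only thing to check is that the hypothesis ``$G$ is in a non-standard action'' is precisely the hypothesis under which Theorem~\ref{thrm:6.4} is stated, and all of the genuinely difficult work is absorbed into that theorem, which itself rests on a long series of deep results. Once this identification is made, the proposition collapses to the observation that $(\log n)/2+6$ already exceeds $7$ at the single extremal degree $n=24$, so no further case analysis is needed.
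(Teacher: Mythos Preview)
Your proof is correct and follows essentially the same route as the paper: invoke Theorem~\ref{thrm:6.4} to obtain $b(G)\le 7$ and then verify the numerical inequality. The paper's version is slightly more compact, observing in one line that $7\le (\log n)/2+6$ because $n\ge 4$, without the case split on whether $b(G)\le 6$ or $b(G)=7$.
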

	\begin{proof}
		We have $b(G) \leq 7 \leq (\log n)/2+6$ since $n \geq 4$.
	\end{proof}
	
	We now take into account the standard actions of almost simple groups. To do this, we shall divide case by case, depending on the simple group $G_0 = \mathrm{soc}(G)$.
	\subsubsection{The socle of $G$ is an alternating group}
	Let $G$ be an almost simple group with socle $G_0 = A_m$. Since we are assuming that $G$ is not a large-base group, the actions we need to consider are those on uniform partitions of the set $\{1,\dots,m\}$. Moreover, since $b(G) \leq b(S_m)$, we can suppose that $G = S_m$.
	\begin{proposition}
		Let $a,b \geq 2$, with $m=ab$ and let $G$ be $\mathrm{Sym}(m)$. Let $\Omega$ be the set of partitions of $\{1,2,\dots,m\}$ into $b$ subsets of size $a$, and let $n = |\Omega|$. Then, $b(G) \leq (\log n)/2 + 6$.
	\end{proposition}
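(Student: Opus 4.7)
The plan is to combine an explicit upper bound for $b(G)$ on uniform partitions (available in the literature) with a lower estimate for $\log n$, then compare the two. For the first ingredient, I would invoke a sharp bound for the base size of $\mathrm{Sym}(m)$ acting on partitions of $\{1,\ldots,m\}$ into $b$ blocks of size $a$; results in the literature (in the style of Benbenishty, James, or Morris--Praeger) give a bound of the shape
$$b(G) \leq \lceil \log m / \log b \rceil + C_0$$
for some small absolute constant $C_0$. Heuristically, each generically chosen partition supplies roughly $\log b$ bits of information for distinguishing the blocks, so that $O(\log m / \log b)$ partitions should suffice to trivialise the pointwise stabiliser.

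For the second ingredient, I would estimate $n = m!/((a!)^b b!)$ from below using Stirling's formula, obtaining
$$\log n \geq (m-b)\log b - O(\log m).$$
Since $a \geq 2$ forces $b \leq m/2$, this yields $\log n \geq (m/2)\log b - O(\log m)$, and in particular $(\log n)/2 + 6 \geq (m/4)\log b + 6 - O(\log m)$. Combining with the first ingredient, the required inequality reduces to checking
$$\lceil \log m / \log b \rceil + C_0 \leq (m/4)\log b + 6 - O(\log m),$$
which, since $\log b \geq 1$, is immediate for all $m$ above an explicit threshold.

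The main obstacle is handling small values of $m$ and the boundary regimes $b = 2$ (bipartitions) and $a = 2$ (perfect matchings), where $\log n$ grows slowest and the asymptotic slack in the argument is tightest. I would address these either by invoking sharper tailored bounds for the base size of $\mathrm{Sym}(m)$ on bipartitions or perfect matchings, both of which are classical and explicit, or by constructing an ad hoc base partition-by-partition together with a finite verification for the remaining small triples $(m,a,b)$. Because the constant $6$ on the right-hand side is generous, once the ranges $b \leq 3$ and $a \leq 3$ are disposed of, the generic Stirling comparison in the previous paragraph leaves substantial margin and the remaining cases collapse to a short finite check.
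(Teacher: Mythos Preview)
Your approach is essentially the same as the paper's: both invoke an explicit base-size bound for $\mathrm{Sym}(m)$ on uniform partitions (the precise reference is Morris--Spiga, \textit{J.~Algebra} \textbf{587} (2021), which gives the exact formula $b(G)=\lceil \log_b(a+2)\rceil+1$ outside a short list of exceptions, rather than the authors you name), bound $\log n$ from below via Stirling-type estimates, compare, and then treat the boundary regimes $a=2$, $b=2$, $b=3$ and residual small $(a,b)$ by direct computation. The paper simply carries this out with explicit constants in place of your $O(\cdot)$ and $C_0$; since the target inequality involves the exact constant $6$, you would in any case have to unwind those hidden constants and perform the finite check, at which point your argument becomes the paper's.
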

	\begin{proof}
		We set $\mathrm{bz}(a,b):=b(G)$ for convenience. We will make frequent use of \cite[Theorem~$1.1$]{MS}, which precisely computes $bz(a,b)$ for any $a,b \in \mathbb{N}$, without restating it here.\\Suppose that $a=2$. Then, $\mathrm{bz}(2,3)=4$ while $\mathrm{bz}(2,b)=3$ for all $b \geq 4$. Thus, $b(G) \leq 4 \leq (\log n)/2+6$ in both cases.	\\
		Suppose now that $b=2$. If $a=4$, $\mathrm{bz}(4,2)=5\leq (\log n)/2+6$ while for $a>4$,  $\mathrm{bz}(a,2) = \lceil \log_2(a+3) \rceil +1$. In this latter case, we have that $n \geq 2^a$, so $\log n \geq a$. Thus,
		\[
			b(G) = \lceil \log(a+3)\rceil+1 \leq  \frac{a}{2}+6 \leq 	\frac{1}{2}\log n+6 .
		\]
		Suppose now that $a,b \geq 3$. If $(a,b) \in \{(3,6),(3,7),(4,7),(7,3)\}$ or $b = a+2$, then $\mathrm{bz}(a,b) \leq 4\leq (\log n)/2+6$. Therefore, the result holds. So suppose that this is not the case. Then, 
		\[
		\mathrm{bz}(a,b) = \lceil \log_b(a+2) \rceil +1.
		\]
		We recall the following inequality, which can be easily obtained by the Stirling formula.
		\[
			x! \geq \left(\frac{x}{3}\right)^x.
		\]
		We have 
		\begin{align*}
			n = \frac{(ab)!}{(a!)^bb!} \geq \frac{\left(\frac{ab}{3}\right)^{ab}}{a^{ab}b^b} = \frac{\left(\frac{b}{3}\right)^{ab}}{b^b} =\frac{b^{(a-1)b}}{3^{ab}} \geq \left(\frac{b}{3^{3/2}}\right)^{(a-1)b}.
		\end{align*}
		Suppose now that $b \geq 11$. In particular, $\log n \geq (a-1)b \log(b/3^{3/2}) \geq 11(a-1)$. Moreover, we have that $\mathrm{bz}(a,b) \leq \lceil \log_{11}(a+2) \rceil +1$. Thus,
		\[
			b(G) \leq \lceil \log_{11}(a+2) \rceil +1\leq a+4 \leq \frac{11}{2}(a-1)+6 \leq \frac{1}{2}\log n+6.
		\]
		Suppose now that $4 \leq b \leq 10$. In this case,
		\[
		n \geq \frac{\left(\frac{b}{3}\right)^{ab}}{b!} \geq \frac{\left(\frac{4}{3}\right)^{ab}}{10!}.
		\]
		In particular, $\log n \geq 4a\log(4/3) -\log 10!$, while $b(G) \leq \lceil \log_b(a+2) \rceil +1 \leq \lceil \log_4(a+2) \rceil +1$. In order to establish the inequality $b(G)\leq (\log n)/2+6$, it suffices to show that 
		\[
		\lceil \log_4(a+2) \rceil +1 \leq \frac{1}{2}\left( 4a\log \frac{4}{3} - \log 10! \right)+6,
		\]
		or, in other words, that the function 
		\[
		f(a) = \frac{1}{2}\left(4a\log \frac{4}{3} - \log 10! \right)+6 - 	\lceil \log_4(a+2) \rceil -1
		\]
		is greater than $0$. Now this is an increasing function and for for $a=10$ its value is greater than $0$, while for $a=9$ it is not. So the initially inequality holds for $a \geq 10$. We can now suppose that $a \leq 9$. 
		\\
		Thus, we have that $4 \leq b \leq 10$ and $ a \leq 9$, with $b \neq a+2$. A computer verification confirmed that the inequality $b(G) \leq (\log n)/2+6$ holds in all these cases.
		\\
		Finally, suppose that $b=3$. Recall the inequality 
		\[
		\binom{m}{k} \geq \left( \frac{m}{k}\right)^k.
		\]
		Using this, we have
		\[
		n = \frac{(3a)!}{(a!)^3 6} \geq \frac{(3a)!}{a!(2a)!6} = \frac{1}{6} \binom{3a}{a} \geq \frac{1}{6} 3^a 
		\]
		so that $\log n \geq a \log3 - \log 6$. Moreover, $\mathrm{bz}(a,3) = \lceil \log_3(a+2) \rceil +1$. Then,
		\[
			b(G) \leq \lceil \log_3(a+2) \rceil +1 \leq \frac{1}{2}(a+4)+1 \leq \frac{1}{2}a\log3-\frac{1}{2}\log 6 +6 \leq \frac{1}{2}\log n+6.
		\]
	\end{proof}
	\subsubsection{The socle of $G$ is a classical group}
	In this section, we take into account all the almost simple groups having socle a classical group $G_0$ acting on $\Omega$, with $(G_0,\Omega)$ not listed in Table \ref{tab:md}.\\
	We will use the following useful results.
	\begin{lemma}$\mathrm{(}$\cite[Lemma~$11$]{MRD}$\mathrm{)}$\label{lemmaQuotCyc}
		Let $G$ be a finite almost simple primitive permutation group acting on $\Omega$ with socle $G_0$, a non-abelian simple classical group, and let $G_0 \trianglelefteq G_1 \trianglelefteq G \leq \mathrm{Sym}(\Omega)$. If $G/G_1$ has a subnormal series of length $s$ with all quotient cyclic, then $b(G) \leq b(G_1)+s$.
	\end{lemma}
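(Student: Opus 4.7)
The plan is to induct on $s$. The base case $s=0$ is immediate since $G = G_1$. For the inductive step, fix a subnormal series $G = K_s \rhd K_{s-1} \rhd \cdots \rhd K_0 = G_1$ with each $K_i/K_{i-1}$ cyclic. Applying the inductive hypothesis to the tail ending at $G_1$ yields $b(K_{s-1}) \leq b(G_1) + (s-1)$, so the whole statement reduces to the \emph{cyclic-extension step}: whenever $N \triangleleft H \leq \mathrm{Sym}(\Omega)$ with $H/N$ cyclic, one has $b(H) \leq b(N) + 1$.

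To establish this step, I would pick a base $B$ for $N$ of size $b(N)$ and set $K := H_{(B)}$, the pointwise $H$-stabilizer of $B$. From $N_{(B)} = 1$ we obtain $K \cap N = 1$, so the quotient map $H \twoheadrightarrow H/N$ restricts to an injection $K \hookrightarrow H/N$; in particular $K$ is cyclic. If $K = 1$ we are done, as $B$ is already a base for $H$. Otherwise I look for a single point $\omega \in \Omega$ with $K_\omega = 1$, for then $B \cup \{\omega\}$ is a base for $H$. When $K$ has prime order this is immediate: $K$ acts faithfully on $\Omega$ (as $H$ does), so some $\omega$ is moved by $K$, and then $K_\omega$ is a proper subgroup of a prime-order cyclic group, hence trivial.

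For the composite cyclic case I would refine the original subnormal series so that every quotient has prime order (every cyclic group admits such a refinement, and the refined chain still runs from $G_1$ up to $G$), then apply the prime-order step at each level, chaining the estimates via the inductive bookkeeping above. The main obstacle is precisely this composite cyclic case: a faithful cyclic action on $\Omega$ need not possess a regular orbit, so a single added point may fail to trivialize a composite cyclic $K$ in one stroke. Refining to prime-order quotients is the clean way around this difficulty, and it is the step that decisively uses the hypothesis that every factor in the given series is cyclic.
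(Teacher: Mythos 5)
First, note that the paper does not prove this lemma at all: it is quoted verbatim from \cite[Lemma~11]{MRD}, so there is no in-paper argument to compare with. Judged on its own, your proposal has a genuine gap, and you have correctly located where the difficulty lies but your proposed repair does not close it. The reduction to the single cyclic-extension step and the prime-order case of that step are both fine: $K=H_{(B)}$ intersects $N$ trivially, embeds in $H/N$, and a faithful group of prime order has trivial stabilizer at any moved point. The problem is the composite case. Refining the subnormal series so that every quotient has prime order replaces the given series of length $s$ by one of length $s'=\Omega(|G/G_1|)$ (the number of prime divisors of $|G/G_1|$ counted with multiplicity), which is in general strictly larger than $s$. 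Chaining the prime-order step along the refined series therefore yields only $b(G)\leq b(G_1)+s'$, not the claimed $b(G)\leq b(G_1)+s$. This weaker bound would not suffice for the way the lemma is used in the paper: for instance, the deduction $b(G)\leq b(\mathrm{PGU}_d(q))+1$ in Proposition~\ref{PSUtotsing} rests on $\mathrm{Aut}(G_0)/\mathrm{PGU}_d(q)$ being cyclic of \emph{composite} order $2f$ in general, so one genuinely needs a single extra point to kill a composite cyclic $K$.

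The missing idea is precisely how to find one point $\omega$ with $K_\omega=1$ when $K=\langle h\rangle$ has composite order, and this is where the hypotheses you never use --- that $G$ is primitive and almost simple with non-abelian simple classical socle $G_0$ --- must enter. One needs $\Omega\neq\bigcup_{p\mid |h|}\mathrm{fix}(h^{|h|/p})$, which follows from fixed-point-ratio estimates for elements of prime order in primitive almost simple classical groups (or from a more ad hoc structural argument); it is false for arbitrary faithful cyclic groups, as your own $C_6$-type example shows. As a secondary point, your top-down induction applies the lemma to $K_{s-1}$, which need not be primitive on $\Omega$; this is easily repaired by running the induction bottom-up, proving $b(K_i)\leq b(K_{i-1})+1$ at each level, but each such step again requires the composite cyclic case.
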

	The following theorem summarizes the results obtained in the proof of \cite[Theorem~$3.3$]{HLM}.
	\begin{theorem}\label{basesizeHLM}
		Let $G_0$ be a simple classical group acting primitively on $\Omega$.
		\begin{enumerate}
			\item If $(G_0,k) \neq (\mathrm{P}\Omega^+_{2m},m)$ and $\Omega= S_k$, then $b(G_0) \leq d/k+10$.
			\item If $\Omega= N_k$, then $b(G_0) \leq d/k+11$.
			\item If $G_0 = \mathrm{PSL}_d(q)$ and $\Omega = P_k$, then $b(G_0) \leq d/k+5$.
		\end{enumerate}
	\end{theorem}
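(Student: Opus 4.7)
The plan is to extract the stated bounds directly from the proof of \cite[Theorem~3.3]{HLM}. That theorem already produces upper bounds on $b(G)$ for almost simple classical groups $G$ in subspace actions; what Theorem~\ref{basesizeHLM} records is the simplified form of the sub-bounds that appear inside HLM's case analysis for the socle $G_0$ itself, rounded up to uniform additive constants $5$, $10$, and $11$. So the proof is essentially bookkeeping: one traces through each sub-case of the HLM argument and verifies that the base for $G_0$ constructed there (possibly after dropping the extra elements needed to handle outer automorphisms) has size at most $d/k + c$ with the claimed constant $c$.

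If one wished to reprove the estimates independently, the natural approach is constructive. For case~(3), choose a decomposition $V = U_1 \oplus \cdots \oplus U_r \oplus U'$ with each $U_i$ of dimension $k$ and $U'$ of dimension $d-rk$, where $r = \lfloor d/k \rfloor$; the $U_i$ pin the stabilizer into a block-diagonal Levi subgroup, and then a bounded number of further $k$-subspaces placed in general position relative to this flag kill the remaining block factors together with scalars and field automorphisms. Cases~(1) and~(2) follow the same template, but the decomposition of $V$ must be adapted to the form preserved by $G_0$: either as a sum of hyperbolic pairs (when $\Omega = S_k$) or as an orthogonal sum of non-degenerate $k$-spaces of the appropriate type (when $\Omega = N_k$). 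The slight increase in the additive constant, from $+5$ to $+10$ or $+11$, reflects the extra subspaces one must add to cut the stabilizer down inside the isometry group rather than inside the full general linear group.

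The principal technical obstacle is case~(1), where the pair $(\mathrm{P}\Omega_{2m}^+, m)$ is genuinely excluded: on maximal totally singular subspaces of the plus-type orthogonal group, $G_0$ has two orbits and the stabilizers of tuples are inflated by triality in small dimensions, so the $d/k + 10$ bound fails there and the case has to be treated separately. Once this pair is set aside, the probabilistic fixed-point arguments underlying HLM (in the tradition of Liebeck--Shalev) show that a random tuple of subspaces of the shape described above is a base with high probability, and therefore the deterministic bounds recorded in the three items follow.
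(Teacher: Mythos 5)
Your proposal matches the paper exactly: the paper offers no proof of this theorem beyond the remark that it ``summarizes the results obtained in the proof of \cite[Theorem~$3.3$]{HLM}'', which is precisely the bookkeeping extraction you describe in your first paragraph. Your additional sketch of how one might reprove the bounds constructively goes beyond what the paper does and is not needed (and note that the paper does later use the bound $b\leq 12 = d/k+10$ even in the excluded case $(\mathrm{P}\Omega_{2m}^{+},m)$, citing HLM separately, so the exclusion reflects a separate treatment in HLM rather than a failure of the numerical bound).
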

	\begin{remark}
		In this and in the following sections, we will use \cite[Table~$4.1.2$]{BGiu} to identify the degrees of all the subspace actions that will appear.
	\end{remark}
	\begin{proposition}
		Let $G$ be an almost simple group having socle $G_0=\mathrm{PSL}_d(q)$, with $d\geq 2$, acting on $P_k$, for $1 \leq k \leq d/2$, and let $n=|P_k|$. Suppose moreover that if $k=1$ then $q>3$. Then, $b(G) \leq (\log n)/2 + 6$.
	\end{proposition}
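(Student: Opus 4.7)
The plan is to combine Theorem \ref{basesizeHLM}(3), which gives $b(G_0) \leq d/k + 5$, with Lemma \ref{lemmaQuotCyc}. Since $\mathrm{Out}(\mathrm{PSL}_d(q))$ decomposes via diagonal, field, and graph automorphisms into a subnormal series with cyclic quotients of length at most $3$, Lemma \ref{lemmaQuotCyc} immediately yields $b(G) \leq d/k + 8$. For a matching lower bound on $n$, I would use the standard estimate $n = \binom{d}{k}_q \geq q^{k(d-k)}$, giving $\log n \geq k(d-k)\log q$.

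Combining these, the desired inequality $b(G) \leq (\log n)/2 + 6$ reduces to $k^2(d-k)\log q \geq 2d + 4k$. Since $k \leq d/2$ gives $d - k \geq d/2$, it suffices to verify $k^2 \log q \geq 4 + 8k/d$, whose right-hand side is at most $8$. Hence the crude argument succeeds whenever $k^2 \log q \geq 8$, and in particular for all $k \geq 3$, and for $k = 2$ with $q \geq 4$. Two families of cases remain to be treated separately: $k = 1$ (with $q \geq 4$), and $k = 2$ with $q \in \{2, 3\}$ and $d$ small.

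For $k = 1$, I would sharpen the upper bound on $b(G)$. Setting $G_1 = G \cap \mathrm{PGL}_d(q)$, we have $G_0 \trianglelefteq G_1 \trianglelefteq G$, and $G/G_1$ embeds into $\mathrm{Aut}(\mathrm{PSL}_d(q))/\mathrm{PGL}_d(q)$, which has a cyclic subnormal series of length at most $2$ (coming from field and, when $d \geq 3$, graph automorphisms). Using the classical fact that $b(\mathrm{PGL}_d(q)) = d + 1$ on $P_1$ (witnessed by $d+1$ projective points in general position), Lemma \ref{lemmaQuotCyc} yields $b(G) \leq d + 3$. Since $q \geq 4$ and $n \geq q^{d-1}$, we get $\log n \geq 2(d-1) \geq 2d - 6$, from which $b(G) \leq d + 3 \leq (\log n)/2 + 6$ follows. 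The finitely many exceptional cases with $k = 2$ and $q \in \{2, 3\}$ (namely $q = 2$ with $d \in \{4,5,6\}$, and $q = 3$ with $d = 4$) involve small groups whose base sizes can be computed or looked up directly in \textsc{Magma} or \textsc{GAP}, verifying the inequality by hand.

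The main obstacle is the $k = 1$ case: the generic bound $b(G_0) \leq d + 5$ from Theorem \ref{basesizeHLM}(3) is substantially weaker than the true value $b(\mathrm{PGL}_d(q)) = d + 1$ for the projective action, and this extra slack cannot be absorbed against $\log n \geq (d-1)\log q$ when $q$ is small. Circumventing it requires the action-specific refinement above together with an explicit analysis of the outer automorphism structure relative to $\mathrm{PGL}_d(q)$.
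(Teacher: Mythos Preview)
Your approach is essentially the paper's: both start from Theorem~\ref{basesizeHLM}(3) together with Lemma~\ref{lemmaQuotCyc} to obtain $b(G)\le d/k+8$, and then play this off against a lower bound on $\log n$ of the form $\log n\ge k(d-k)\log q$. The treatment of $k=1$ via $b(\mathrm{PGL}_d(q))=d+1$ on $P_1$ plus Lemma~\ref{lemmaQuotCyc} is likewise what the paper does.

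There is, however, a bookkeeping gap in your handling of $k=2$ with $q\in\{2,3\}$. The sufficient condition you state, $k^2\log q\ge 8$, does not depend on $d$, so for $k=2$ and $q\in\{2,3\}$ it fails for \emph{every} $d$, not just for small $d$. To reduce to finitely many $d$ you must revert to the unsimplified inequality $k^2(d-k)\log q\ge 2d+4k$; for $k=2$, $q=2$ this reads $4(d-2)\ge 2d+8$, i.e.\ $d\ge 8$, so the exceptional list is $d\in\{4,5,6,7\}$, not $\{4,5,6\}$ as you wrote. (Your claim that only $d=4$ remains for $q=3$ is correct.) This is a minor slip, easily repaired by adding $d=7$, $q=2$ to the computer check and spelling out the reversion to the unsimplified inequality.

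By way of comparison, the paper sidesteps this by citing a sharper bound specific to $k=2$ from \cite[Lemma~4]{MRD}, namely $b(G)\le\lceil d/2\rceil+5$ for $d>4$; combined with $\log n\ge 2d-4$ this handles all $d>4$ uniformly in $q$, leaving only $(d,q)\in\{(4,2),(4,3)\}$ for GAP. Your route, relying only on the generic bound of Theorem~\ref{basesizeHLM}, is more self-contained but buys you a slightly longer list of residual cases.
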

	\begin{proof}
		By Theorem~\ref{basesizeHLM}, we have
		\[
			b(G_0) \leq \frac{d}{k}+5.
		\]
		The group $\mathrm{Out}(G_0)$ has a normal series with all cyclic quotients of length at most three, $G/G_0$	has such a series with length at most two if $k = d/2$, or if $d = 2$, or if $q$ is prime; and is cyclic if more than one of these conditions holds. Thus, by Lemma~\ref{lemmaQuotCyc}, $b(G) \leq b(G_0) + 3$, so that 
		\[
			b(G) \leq \frac{d}{k}+8.
		\]
		Firstly, suppose that $k=1$, so that $q>3$. It is easy to see that $G_0$ admits a base of cardinality $d+1$. In particular, by Lemma~\ref{lemmaQuotCyc}, $b(G) \leq b(G_0)+1 \leq d+4$. Moreover, since $n$ is an increasing function of $q$ with $q\geq 4$, and $d>1$, we have that
		\[
			\log n \geq \log \left( \frac{4^d-1}{3}\right) \geq  \log \left(\frac{4^d-4^{d-1}}{3}\right) = 2d-2.
		\]
		Thus,
		\[
			b(G) \leq d+4 \leq \frac{1}{2}(2d-2)+6 \leq \frac{1}{2}\log n +6.
		\]
		Suppose now that $k=2$, so that $d \geq 4$. Firstly suppose that $d=4$. If $q=2$ or $q=3$, a GAP \cite{GAP} computation shows that our statement holds. Suppose $q>3$. In this case, by \cite[Lemma $4$]{MRD}, we have $b(G) \leq 8$, and $8 \leq (\log n)/2+6$ if and only if $n \geq 16$. But $n=(q^2+1)(q^2+q+1)$ and for $q\geq 4$ we have $n \geq 16$. Finally, suppose that $d >4$. Again by \cite[Lemma $4$]{MRD}, we have 
		\[
		b(G) \leq \left \lceil \frac{d}{2} \right\rceil + 5,
		\]
		and $\log n \geq 2d-4$, by \cite[Proposition $6$]{MRD}. Thus,
		\[
			b(G) \leq \left\lceil \frac{d}{2} \right\rceil + 5 \leq \frac{d}{2}+6 \leq d+4 =\frac{1}{2}(2d-4)+6\leq \frac{1}{2}\log n+6.
		\]
		Suppose now that $k \geq 3$, so that $d \geq 6$. We have seen that $b(G) \leq d/k+8 \leq d/3+8$. Moreover, $\log n \geq d+3$ (see \cite[Proposition $6$]{MRD}). Thus,
		\[
			b(G) \leq \frac{d}{3}+8 \leq \frac{1}{2}(d+3)+6 \leq \frac{1}{2}\log n+6.
		\]
	\end{proof}
	\begin{proposition}\label{PSUtotsing}
		Let $G$ be an almost simple group having socle $G_0=\mathrm{PSU}_d(q)$, with $d\geq 3$, acting on $S_k$, for $1 \leq k \leq d/2$, and let $n=|S_k|$. Then, $b(G) \leq (\log n)/2+6$.
	\end{proposition}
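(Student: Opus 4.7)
The plan is to mimic the argument used in the preceding proposition for $\mathrm{PSL}_d(q)$ acting on $P_k$. Since $G_0 = \mathrm{PSU}_d(q)$ is not of the excluded type $\mathrm{P}\Omega^+_{2m}$, part (1) of Theorem~\ref{basesizeHLM} gives $b(G_0) \leq d/k + 10$. The group $\mathrm{Out}(\mathrm{PSU}_d(q))$ admits a subnormal series with cyclic quotients of length at most three (for the diagonal, field, and graph-field components), so Lemma~\ref{lemmaQuotCyc} upgrades this to $b(G) \leq d/k + 13$. In several sub-cases (for example when $q$ is prime, or when $k = \lfloor d/2 \rfloor$) the length of the series drops and one can shave a constant.

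Next I would read off $n = |S_k|$ from \cite[Table~4.1.2]{BGiu} and, after replacing each factor $q^j \pm 1$ by $q^j/2$, derive a lower bound of the shape $\log n \geq k(2d-3k)\log q - c_0$ for an explicit absolute constant $c_0$; in particular $\log n \geq 2(d-k) - c_0$ for every admissible $k$, with substantially better estimates when $k$ is small.

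The argument then splits on the value of $k$, just as in the $\mathrm{PSL}$ case. For $k = 1$ I would observe directly that $G_0$ has a base of size at most $d+1$ (an orthonormal-style basis argument in the Hermitian geometry), and combine this with $\log n \geq 2d \log q - O(1)$, which suffices since $q \geq 2$. For $k = 2$ the few small cases ($d = 4$ with $q$ small) are checked in GAP, and for $d \geq 5$ one compares $b(G) \leq d/2 + 13$ with the explicit lower bound on $\log n$. For $k \geq 3$ (so $d \geq 6$) the target inequality follows from $b(G) \leq d/3 + 13$ and $\log n \geq (2d-9)\log q - O(1)$ once $d$ is sufficiently large, with the finitely many residual small configurations checked directly.

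The main obstacle I anticipate lies in the regime $k \approx d/2$ with $q = 2$, where $n$ is close to its smallest possible value and the additive constant $10$ in Theorem~\ref{basesizeHLM}(1) leaves very little slack. In that corner one may need to bypass Theorem~\ref{basesizeHLM} and invoke directly the sharper estimates for $b(G_0)$ from \cite{HLM} or \cite{MRD}, or use the exact product formula for $|S_k|$ rather than a uniform bound, in order to absorb the constants and reach the target $(\log n)/2 + 6$.
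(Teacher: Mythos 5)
Your proposal follows the paper's proof almost exactly: Theorem~\ref{basesizeHLM}(1) combined with Lemma~\ref{lemmaQuotCyc} for the generic bound $b(G)\leq d/k+O(1)$, sharper estimates for $k=1,2$, lower bounds on $\log n$ read off from \cite[Table~4.1.2]{BGiu}, and direct/computational verification of the residual configurations near $k=d/2$ with $q$ small --- which is precisely where the paper resorts to GAP and FinInG, namely $(d,k)\in\{(6,3),(7,3),(8,3),(8,4)\}$. Two caveats. First, $\mathrm{Out}(\mathrm{PSU}_d(q))$ has a normal series with only \emph{two} cyclic quotients (diagonal and field; there is no separate graph factor to count), so the paper gets $b(G)\leq b(G_0)+2$ and $b(G)\leq b(\mathrm{PGU}_d(q))+1$; your $+3$ is not wrong, but it costs you at the boundary (e.g.\ at $d=9$, $k=3$ the inequality $d/3+12\leq\frac{1}{2}(3d-9)+6$ holds with equality, while $d/3+13$ does not), so your residual case list grows. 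Second, and more substantively: for $k=1$ you genuinely need $b(G)\leq d+4$, since the generic $d/k+13=d+13$ is hopeless against $\log n\approx 2d-4$; but your proposed justification --- an ``orthonormal-style basis argument'' --- does not apply, because the domain $S_1$ consists of totally singular points and an orthonormal basis of the Hermitian space is not contained in $S_1$. The fact you need, $b(\mathrm{PGU}_d(q))\leq d$ on $S_1$, is true and is exactly \cite[Lemma~3]{MRD}, which is what the paper cites; with that substitution (and the corresponding \cite[Lemma~5]{MRD} for $k=2$, which spares you the long list of small-$q$ checks at $d\leq 6$) your argument goes through.
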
 
	\begin{proof}
	By Theorem~\ref{basesizeHLM},
		\begin{equation}\label{dk-ineq}
			b(G_0) \leq \frac{d}{k}+10.
		\end{equation}
		Moreover, $\mathrm{Aut}(G_0)/\mathrm{PGU}_d(q)$ is cyclic, while $\mathrm{Out}(G_0)$ has a normal series with two cyclic quotients. Thus, by Lemma~\ref{lemmaQuotCyc}, $b(G) \leq b(G_0)+2$, and $b(G) \leq b(\mathrm{PGU}_d(q)) +1$.\\
		Firstly, suppose that $k=1$. By \cite[Lemma~$3$]{MRD}, we have $b(\mathrm{PGU}(d,q)) \leq d$, so that $b(G) \leq d+1$. From \cite[Table~$4.1.2$]{BGiu}, we see that
		\[
		n = \frac{(q^{d-1}-(-1)^{d-1})(q^d-(-1)^d)}{q^2-1}.
		\]
		Suppose first that $d$ is even. Hence, we have
		\begin{align*}
			n &= \frac{(q^{d-1}+1)(q^d-1)}{q^2-1} \geq \frac{q^{d-1}}{q^2}(q^d-q^{d-1}) =q^{d-3}q^{d-1}(q-1) \geq q^{2d-4}.
		\end{align*}
		Suppose now that $d$ is odd, so that
		\begin{align*}
			n = \frac{(q^{d-1}-1)(q^d+1)}{q^2-1} \geq \frac{q^d(q^{d-1}-q^{d-2})}{q^2} = q^{2d-4}(q-1) \geq q^{2d-4}.
		\end{align*}
		In both cases, $\log n \geq 2d-4$. Thus,
		\[
			b(G) \leq d+1 \leq \frac{1}{2}(2d-4)+6 \leq \frac{1}{2}\log n+6.
		\]		
		Suppose now that $k=2$. By \cite[Lemma $5$]{MRD}, we see that
		if $d \leq 6$, then $b(G) \leq 6 \leq (\log n)/2+6$. if $d\geq 7$, then $b(G) \leq \lceil d/2 \rceil +1$. Also in this case $\log n \geq 2d-4$ (see \cite[Proposition $5$]{MRD}). Thus, $b(G) \leq d/2+2 \leq (2d-4)/2+6 \leq (\log n)/2 +6.$\\ 
		Finally, suppose that $k \geq 3$, so that $d \geq 6$. We have $b(G) \leq d/3 + 12$. Moreover, by \cite[Proposition $5$]{MRD}), we see that $\log n \geq 3d-9$. Suppose now that $d\geq 9$. We have
		\[
			b(G) \leq \frac{d}{3}+12 \leq \frac{1}{2}(3d-9)+6 \leq \frac{1}{2}\log n +6.
		\]
		Suppose now that $d\leq 9$. Then, there are just few possibilities: $(d,k) = (6,3),(7,3),(8,3),(8,4)$. \\
		Suppose that $(d,k) = (6,3)$. In this case,
		\[
		n = (q+1)(q^3+1)(q^5+1),
		\]
		while $b(G) \leq 14$. It is easy to see that if $q \geq 4$, then $14 \leq (\log n)/2 + 6$.
		If $q=2$ or $q=3$, we have verified with $\mathrm{GAP}$ \cite{GAP} that $G_0$ has a base of cardinality $6$ and $5$ respectively\footnote{For $q=2$, we used the $\mathrm{GAP}$ library of primitive groups. For $q=3$, we used the FinInG package \cite{FINING}}. Thus, $b(G) \leq 8$ in both cases. Now if $q=2$ then $n=891$ and if $q=3$ then $n=27328$, and thus $\log n \geq 4$.\\ 
		Suppose that $(d,k) = (7,3)$. Thus,
		\[
		n = (q^3+1)(q^5+1)(q^7+1),
		\]
			and $b(G) \leq 7/3+12$. For $q \geq 3$, we have that $7/3+12 \leq (\log n)/2+6$, since $n$ is an increasing function of $q$ and the inequality $b(G) \leq (\log n)/2+6$ holds for $q=3$. For $q=2$, we have verified with $\mathrm{GAP}$ that $G_0$ has a base of cardinality $4$, hence $b(G) \leq 6 \leq 6+(\log n)/2$.\\
		Suppose now that $(d,k)= (8,3)$. In this case,
		\[
			n = (q^2+1)(q^3+1)(q^4+1)(q^5+1)(q^7+1),
		\]
		while $b(G) \leq 8/3 +12$. Since $n$ is an increasing function in $q$ and the value of $(\log n)/2+6$ for $q=2$ is greater then $8/3 +12$, we are done.\\
		Suppose finally that $(d,k) = (8,4)$. In this case,
		\[
		n = (q+1)(q^3+1)(q^5+1)(q^7+1),
		\]
		while $b(G) \leq 14$. As before, for $q=2$, we have that $(\log n)/2+6 \geq 14$, and so we are done since $n$ is in increasing function of $q$.
	\end{proof}
	
	\begin{proposition}
		Let $G$ be an almost simple group having socle $G_0=\mathrm{PSU}_d(q)$, with $d\geq 3$, acting on $N_k$ for $1 \leq k < d/2$, and let $n = |N_k|$. Then, $b(G) \leq (\log n)/2+6$.
	\end{proposition}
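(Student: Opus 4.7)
The plan is to follow the same template used for the totally singular case in Proposition~\ref{PSUtotsing}. By Theorem~\ref{basesizeHLM}(2), $b(G_0) \leq d/k + 11$. Since $\mathrm{Aut}(G_0)/\mathrm{PGU}_d(q)$ is cyclic and $\mathrm{Out}(G_0)$ admits a normal series of length two with cyclic quotients, Lemma~\ref{lemmaQuotCyc} yields $b(G) \leq b(G_0) + 2 \leq d/k + 13$, as well as $b(G) \leq b(\mathrm{PGU}_d(q)) + 1$, which will be useful for small $k$.

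Next, I would read off from \cite[Table~$4.1.2$]{BGiu} the explicit formula for $n = |N_k|$; it counts non-degenerate Hermitian $k$-subspaces of $\mathbb{F}_{q^2}^d$ and has dominant term $q^{2k(d-k)}$. A direct product estimate of the same flavour as the one performed in Proposition~\ref{PSUtotsing} gives $\log n \geq 2k(d-k) - c$ for some small absolute constant $c$, and sharper bounds for $k = 1, 2$ are available from \cite[Proposition~$5$]{MRD}.

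The argument then splits according to the value of $k$. For $k = 1$, one uses $b(\mathrm{PGU}_d(q)) \leq d$ from \cite[Lemma~$3$]{MRD}, giving $b(G) \leq d + 1$, and combines it with $\log n \geq 2d - O(1)$, which comfortably yields $b(G) \leq (\log n)/2 + 6$. For $k = 2$, the bound $b(G) \leq \lceil d/2 \rceil + \text{const}$ coming from \cite[Lemma~$5$]{MRD} is paired with $\log n \geq 4d - O(1)$. For $k \geq 3$ we have $b(G) \leq d/3 + 13$, while $k(d-k) \geq 3(d-3)$ gives $\log n \geq 6d - O(1)$, so the desired inequality holds as soon as $d$ is large enough.

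The main obstacle is the finite list of small-parameter cases left over after these asymptotic estimates: typically the pairs $(d,k)$ with $d \leq 9$ and $k \geq 3$, together with the small values of $d$ or $q$ in the $k=1,2$ regimes. These will be handled individually, either by sharpening the lower bound on $|N_k|$ directly from its closed form (using the explicit product of terms $q^i \pm 1$) or, in the few remaining exceptions, by a direct GAP computation of $b(G_0)$, mirroring exactly the endgame that closes the proof of Proposition~\ref{PSUtotsing}.
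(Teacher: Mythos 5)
Your proposal follows essentially the same route as the paper: the reduction $b(G)\leq b(G_0)+2\leq d/k+13$ via Theorem~\ref{basesizeHLM} and Lemma~\ref{lemmaQuotCyc}, the sharper bounds $b(G)\leq d+1$ for $k=1$ and $b(G)\leq \lceil d/2\rceil+O(1)$ for $k=2$ from the relevant lemmas of \cite{MRD}, the lower bound $\log n\gtrsim 2k(d-k)$ from the explicit formula in \cite{BGiu}, and a finite check of the leftover small parameters (in the paper these are exactly $(d,k)=(7,3)$ and $(8,3)$, settled by monotonicity of $n$ in $q$). The only cosmetic difference is the citation for the $k=1$ bound (the paper uses \cite[Lemma~6]{MRD} for $N_1$ rather than Lemma~3) and your stronger $k\geq 3$ estimate $\log n\geq 6d-O(1)$ in place of the paper's $\log n\geq d+11$; both suffice.
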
 
	\begin{proof}
		As we already seen in Proposition~\ref{PSUtotsing}, $b(G) \leq b(G_0)+2$ and $b(G) \leq b(\mathrm{PGU}_d(q))+1$. Combining these with Theorem~\ref{basesizeHLM},  we get $b(G)\leq d/k+13$.\\
		Suppose firstly that $k=1$. In this case, \cite[Lemma $6$]{MRD}  shows that $b(\mathrm{PGU}_d(q)) \leq d$, so that $b(G) \leq d+1$. Moreover, we have that
		\[
			n = \frac{q^{d-1}(q^d-(-1)^d)}{q+1},
		\]
		and it is easy to see, by dividing cases where $d$ is odd or even, that
		\[
		n \geq
		\begin{cases}
			q^{2d-4} \text{ if $d$ is even,} \\
			q^{2d-3} \text{ if $d$ is odd,}
		\end{cases}
		\]
		so that $\log n \geq 2d-4$ in both cases. In conclusion, $b(G) \leq d+1 \leq (2d-4)/2+6 \leq (\log n)/2+6$.\\
		Suppose that $k=2$. Then, \cite[Lemma $9$]{MRD} shows that $b(G) \leq \lceil d/2 \rceil +1$ if $d\neq 6$. If $d=6$, it shows that $b(G) \leq 5\leq (\log n)/2+6$, so we can suppose that $d \neq 6$. By dividing the cases where $d$ is odd or even, we have 
		\[
		n = \frac{q^{2(d-2)}(q^{d-1}-(-1)^{d-1})(q^d-(-1)^d)}{(q+1)(q^2-1)} \geq q^{4d-10}.
		\]
		Thus, $b(G) \leq d/2+2 \leq (4d-10)/2+6 \leq (\log n)/2+6$.	\\
		Finally, suppose that  $k \geq 3$, so that $d > 6$. We have $b(G) \leq d/3+13$. Moreover, by  \cite[Proposition $5$]{MRD}), we see that $\log n \geq d+11$. If $d\geq 9$, we have
		\[
			b(G) \leq \frac{d}{3}+13 \leq \frac{1}{2}(d+11)+6 \leq \frac{1}{2}\log n+6.
		\]
		Thus, we have to argue separately for the cases $(d,k)=(7,3)$ and $(d,k)=(8,3)$. \\
		Suppose first that $(d,k) = (7,3)$. In this case,
		\[
		n = \frac{q^{12}(q^3-1)(q^5+1)(q^7+1)}{(q+1)(q^2-1)},
		\]
		while $b(G) \leq 46/3$. Notice that $n$ is an increasing function on $q$, and for $q=2$ we have 
		\[
		\frac{1}{2}\log n+6 \geq \frac{46}{3},
		\]
		and so we are done. The case $(d,k) = (8,3)$ is done in the same way.
	\end{proof}

	\begin{proposition}\label{propOrtSing}
		Let $G$ be an almost simple group having socle $G_0=\mathrm{P}\Omega_d^\varepsilon(q)$, with $d \geq 8$ even, acting on $S_k$, and let $n=|S_k|$. Suppose moreover that $q>3$ when $k=1$. Then $b(G) \leq (\log n)/2+6$.
	\end{proposition}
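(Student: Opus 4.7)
The plan mirrors the pattern of the preceding propositions for $\mathrm{PSL}_d(q)$ and $\mathrm{PSU}_d(q)$. The input is the bound $b(G_0) \leq d/k + 10$ supplied by Theorem~\ref{basesizeHLM}, valid except in the single case $(G_0,k) = (\mathrm{P}\Omega^+_d(q), d/2)$ (where the maximal totally singular subspaces split into two $G_0$-orbits). The outer automorphism group of $\mathrm{P}\Omega_d^\varepsilon(q)$ with $d \geq 8$ even is solvable with a normal series of cyclic quotients of length bounded by an absolute constant (at most four in general, and six when $d = 8$ because of triality). Applying Lemma~\ref{lemmaQuotCyc} therefore gives $b(G) \leq b(G_0) + c$ for some $c \leq 6$, and in particular $b(G) \leq d/k + 16$ in the generic case.

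The next move is a case split on $k$, identical in spirit to Propositions~\ref{PSUtotsing} and the one following it. For $k = 1$, I would replace the generic bound by the sharper orthogonal analogue of \cite[Lemma~3]{MRD} giving $b(G) \leq d + O(1)$, and compare it with the estimate $n \geq q^{2d - O(1)}$ read off from \cite[Table~4.1.2]{BGiu}; since the hypothesis $q \geq 4$ is in force for $k = 1$, the inequality $(\log n)/2 + 6 \geq b(G)$ follows at once. For $k = 2$, I would use the corresponding bound from \cite{MRD} of the form $b(G) \leq \lceil d/2 \rceil + O(1)$ together with a quadratic-in-$d$ lower bound on $\log n$. For $k \geq 3$, and hence $d \geq 8$, the generic estimate $b(G) \leq d/3 + 16$ is compared against $\log n \gtrsim d + \text{const}$, which suffices once $d$ is large enough; the finitely many residual small tuples $(d,k)$ are handled by observing that $n$ is monotone increasing in $q$, reducing to $q \in \{2,3\}$ and a short $\mathrm{GAP}$ computation, exactly as in the $\mathrm{PSU}$ arguments.

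The main obstacles are the two peculiarities of the $D_m$ family. First, in the exceptional case $(G_0,k) = (\mathrm{P}\Omega^+_d(q), d/2)$ Theorem~\ref{basesizeHLM} does not apply directly; however, $n$ is here of order $q^{d(d-2)/8}$, so even a crude bound $b(G_0) \leq d + O(1)$ obtainable from the arguments underlying \cite{HLM} is overwhelmed by $(\log n)/2 + 6$, and I would dispatch this case by a separate short computation. Second, the triality automorphism in dimension $d = 8$ inflates the length of the normal series used in Lemma~\ref{lemmaQuotCyc}; since $d = 8$ is a fixed small dimension with $k \in \{1,2,3,4\}$ and only a few small $q$ problematic, a direct verification (analogous to the treatment of $(d,k) = (6,3)$ for $\mathrm{PSU}$) completes the proof. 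All remaining work is bookkeeping of the same flavour as in the preceding propositions.
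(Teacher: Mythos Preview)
Your plan is correct and follows essentially the same route as the paper: bound $b(G_0)$ via Theorem~\ref{basesizeHLM}, pass to $b(G)$ using Lemma~\ref{lemmaQuotCyc} and the structure of $\mathrm{Out}(G_0)$ (with extra care for triality when $d=8$), then split on $k$ exactly as in the $\mathrm{PSU}$ arguments, invoking \cite{MRD} for $k=1,2$, the generic $d/k$ bound for $3\le k\le d/2-1$, and a separate computation for the exceptional case $k=d/2$, $\varepsilon=+$. Two small corrections: for $k=2$ the relevant lower bound on $\log n$ is linear in $d$ (the paper uses $\log n\ge d-1$), not quadratic, though linear already suffices against $b(G)\le d/2+O(1)$; and for $k=d/2$ the paper actually has the constant bound $b(G)\le 12$ from \cite{HLM}, which is sharper than your $d+O(1)$ but either works against $\log n\gtrsim d^2/8$.
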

	\begin{proof}
		If $G_0 = \mathrm{P}\Omega_8^+(p^f)$ with $p$ odd, then $\mathrm{Out}(G_0) = \mathrm{Sym}(4) \times C_f$, while if $p=2$ then $\mathrm{Out}(G_0)=\mathrm{Sym}(3)\times C_f$. If $G_0 \neq  \mathrm{P}\Omega_8^+(p^f)$ or if the action is not on $S_2$, then $G/G_0$ has a normal series with cyclic quotients of length at most three and  $G/\mathrm{PGO}^\varepsilon_d(q)$ has a normal series with cyclic quotients of length at most $2$. Thus, $b(G) \leq b(G_0)+3$ and $b(G) \leq b(\mathrm{PGO}_d^\varepsilon(q))+2$ by Lemma~\ref{lemmaQuotCyc}. By Theorem~\ref{basesizeHLM}, if $3\leq k \leq d/2-1$, then $b(G_0) \leq d/k+10$, and so 
		\[
			b(G) \leq \frac{d}{k}+13.
		\]
		First of all, suppose that $k=1$, so that $q>3$. By \cite[Lemma $3$]{MRD}, $b(\mathrm{PGO}_d^\varepsilon(q) ) \leq d-1$, and so $b(G) \leq d+1$. In this case, using the fact that $q\geq 4$,
		\[
		n = \frac{(q^{\frac{d}{2}}\mp 1)(q^{\frac{d}{2}-1}\pm 1)}{q-1} \geq \frac{(4^{\frac{d}{2}}\mp 1)(4^{\frac{d}{2}-1}\pm 1)}{3}\geq 4^{d-2},
		\]
		so that $\log n \geq 2(d-2)$. Thus, $b(G) \leq d+1 \leq d+4 \leq (\log n)/2+6$.\\
		Suppose now that $k=2$. By \cite[Lemma~$5$]{MRD}, $b(\mathrm{PGO}_d^\varepsilon(q)) \leq \lceil d/2 \rceil$, and so, $b(G) \leq d/2+5$, while $\log n  \geq d-1$ (see \cite[Proposition~$2$]{MRD}). Thus $b(G) \leq d/2+5 \leq (d-1)/2+6\leq (\log n)/2+6$.
		\\
		Suppose now that $3 \leq k \leq d/2-1$. We firstly take into account the case $(d,k)=(8,3)$. If $G_0 = \mathrm{P}\Omega_8^+(q)$, we have that
		\[
			n = \frac{(q^4-1)(q+1)(q^2+1)(q^3+1)}{(q-1)},
		\]
		while $b(G) \leq 8/3+15\leq 18$ if $q$ is odd and $b(G) \leq 8/3+14\leq 17$ if $q$ is even. For $q=7$ we have $(\log n)/2+6 \geq 18$, and for $q=8$ we have $(\log n)/2+6 \geq 17$. Since $n$ is an increasing function of $q$, we have $b(G) \leq (\log n)/2+6$ for all values of $q\geq 7$.	We have thus to handle with the case $G_0 = \mathrm{P}\Omega_8^+(q)$ acting on $S_3$ with $q\in \{2,3,4,5\}$. Using the $\mathrm{GAP}$ package $\mathrm{FinInG}$ \cite{FINING}, we can compute bases for $G_0$ in these cases. In particular, we have found that $G_0$ has a base of cardinality $4$ for all these values of $q$, and so $b(G) \leq 9$. However, $(\log n)/2 +6\geq 9$ in all the four cases.\\
		If $G_0 = \mathrm{P}\Omega_8^-(q)$, then $b(G) \leq b(G_0)+3 \leq 8/3+13\leq 16$. In this case, 
		\[
			n = (q^2+1)(q^3+1)(q^4+1),
		\] 
		and $(\log n)/2 + 6 \geq 16$ for $q=5$, and hence for all $q\geq 5$. For $q\in\{2,3,4\}$, we argue as in the hyperbolic case.\\
		Suppose now $d \geq 10$. We have $b(G) \leq d/3+13$, and by \cite[Proposition~$2$]{MRD}, we see that $\log n \geq d+5$. If $d\geq 27$, we have $b(G) \leq d/3+13 \leq (d+5)/2+6 \leq (\log n)/2+6$.\\
		Thus, we can now suppose that $10 \leq d \leq 26$. If $q=3$, we use the exact value of $n$ to see that, for each possible choice of $(d,k)$, we have
		\[
		\frac{d}{3}+13 \leq \frac{1}{2}\log n+6,
		\]
		in both the cases $\varepsilon = \pm$. Since $n$ is always an increasing function of $q$, the inequality holds for all $q \geq 3$.\\
		Suppose then that $q=2$. Using the exact value of $n$, we see that the inequality $d/3+13 \leq (\log n)/2+6$ holds except for $(d,k,\varepsilon) = (10,3,\pm)$ or $(d,k,\varepsilon)=(10,4,\pm)$ and $(d,k,\varepsilon) = (12,5,-)$. However, in these cases, we can compute a base using the $\mathrm{GAP}$ package $\mathrm{FinInG}$ \cite{FINING}. 
		In particular, if $(d,k,\varepsilon)=(10,3,\pm)$, $G_0$ admits a base of cardinality $4$, so that $b(G) \leq 7 \leq (\log n)/2+6$.\\
		If $(d,k,\varepsilon) = (10,4,\pm)$, $G_0$ admits a base of cardinality $5$ and $4$ respectively, so that $b(G) \leq 8 \leq (\log n)/2+6$.\\
		If $(d,k,\varepsilon) = (12,5,-)$, $G_0$ admits a base of cardinality $4$, so that $b(G) \leq 6 \leq (\log n)/2 +6$.
		\\ 
		Finally, suppose that $k = d/2$, so that $\epsilon = +$. In this case $\log n \geq d(d+2)/8$ while $b\leq 12$ If $d\geq 10$, we have $b(G) \leq 12 \leq d(d+2)/16+6 \leq (\log n)/2+6$.\\
		Assume that $d = 8$. In this case,
		\[
			n = (q+1)(q^2+1)(q^3+1)(q^4+1).
		\]
		It is shown in \cite{HLM} that $b(G) \leq 10$ when $q=2$ and $b(G) \leq 12$ otherwise. If $q=2$ we he have that $(\log n)/2+6 \geq 11$. If $q\geq3$, we have $(\log n)/2+6 \geq 16$, and this concludes the proof.
	\end{proof}
	\begin{proposition}
		Let $G$ be an almost simple group having socle $G_0=\mathrm{P}\Omega_d^\varepsilon(q)$, with $d \geq 8$ even, acting on $N_k^\epsilon$ with $\epsilon=+,-,\circ$, and let $n=|N_k^\epsilon|$. Suppose moreover that $q>3$ when $k=1$. Then $b(G) \leq (\log n)/2+6$.
	\end{proposition}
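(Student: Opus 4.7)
The plan is to mirror closely the argument of Proposition~\ref{propOrtSing}, substituting Theorem~\ref{basesizeHLM}(2) for part~(1). The same outer-automorphism analysis applies: $G/G_0$ has a normal series with cyclic quotients of length at most three and $G/\mathrm{PGO}_d^\varepsilon(q)$ such a series of length at most two, so Lemma~\ref{lemmaQuotCyc} yields $b(G)\leq b(G_0)+3$ and $b(G)\leq b(\mathrm{PGO}_d^\varepsilon(q))+2$. Combined with Theorem~\ref{basesizeHLM}(2) this gives the uniform bound
\[
	b(G)\leq \frac{d}{k}+14.
\]

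I would then split on $k$. For $k=1$ (where $q>3$ by hypothesis), I would invoke the appropriate lemma in \cite{MRD} bounding $b(\mathrm{PGO}_d^\varepsilon(q))$ on non-singular $1$-spaces by at most $d-1$, so that $b(G)\leq d+1$, and extract from \cite[Table~$4.1.2$]{BGiu} that $n$ is of order $q^{d-1}$, yielding $\log n\geq 2(d-2)$; the target inequality then follows as in Proposition~\ref{propOrtSing}. For $k=2$ I would use the analogous MRD lemma giving $b(G)\leq \lceil d/2\rceil + c$ for a small constant $c$, together with the lower bound on $\log n$ from \cite[Proposition~$2$]{MRD}.

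For $3\leq k<d/2$, the uniform bound $b(G)\leq d/3+14$, combined with the lower bound for $\log n$ from \cite[Proposition~$2$]{MRD} (which is of the form $kd-\mathrm{const}$), handles all sufficiently large $d$. The remaining pairs $(d,k)$ with small dimension (essentially $d\in\{8,10,12\}$) and small $q$ (namely $q\in\{2,3\}$) are treated exactly as in Proposition~\ref{propOrtSing}: where the exact formula for $n$ from \cite[Table~$4.1.2$]{BGiu} already suffices, one verifies the inequality at the smallest relevant $q$ and invokes monotonicity in $q$ to cover the rest; where the generic bound is too weak, one computes an explicit base for $G_0$ in $\mathrm{GAP}$ using the $\mathrm{FinInG}$ package \cite{FINING}, which typically produces a base of size $4$ or $5$, amply sufficient.

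The principal obstacle is the case-by-case treatment of the residual small-dimensional, small-$q$ exceptions, varying over $\varepsilon\in\{+,-\}$, $\epsilon\in\{+,-,\circ\}$ and $1\leq k<d/2$. The bookkeeping required to ensure that every quadruple compatible with the hypotheses is covered, and that the computational search in $\mathrm{FinInG}$ terminates within reasonable resources for the cases where the generic bound fails, is the most delicate part of the argument; everything else is a straightforward transcription of the singular case with the constant $10$ in Theorem~\ref{basesizeHLM}(1) replaced by the constant $11$ of part~(2).
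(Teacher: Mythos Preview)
Your outline follows the same overall strategy as the paper, but the paper organises the case analysis by the \emph{parity of $k$} rather than merely by its size, and this is not just cosmetic. Two points you have glossed over are handled by the parity split: first, the lower bounds on $\log n$ from \cite[Proposition~2]{MRD} have genuinely different shapes for even and odd $k$ (for odd $k$ the bound is $(3d-10)\log(q/2)$, not of the form ``$kd-\mathrm{const}$'' that you assume); second, for odd $k>1$ the existence of a non-degenerate $k$-subspace forces $q$ to be odd, so the troublesome $q=2$ never occurs there. The paper exploits both facts: for even $k\geq 4$ it uses the sharper bound $b(G)\leq d/4+14$ together with $\log n\geq 4d-17$, leaving only $(d,k)=(8,4)$ as a residual; for odd $k\geq 3$ it uses $d/3+14$ with the $(3d-10)\log(q/2)$ bound, treating $q>3$ (residuals $(8,3)$, $(10,3)$) and $q=3$ (residuals $8\leq d\leq 20$, dispatched using exact values of $n$ rather than \textsc{FinInG}) separately. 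Your uniform $d/3+14$ route for all $k\geq 3$ can be made to work, but it generates a larger set of residual cases and you will still need to recognise the $q$-odd constraint for odd $k$ to know which small-$q$ checks are actually required.

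One further point: the paper begins by observing that the stabiliser of a non-degenerate $k$-space also fixes its orthogonal complement, so one may assume $k\leq d/2$; you should make this reduction explicit before embarking on the case analysis.
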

	\begin{proof}
		The stabilizer of an element of $N_k^\epsilon$ also stabilizes a non-degenerate $(d-k)$-space, of the opposite sign if $\varepsilon = -$ and $k$ is even, and of the same sign otherwise. Thus, we may assume $k \leq d/2$.\\
		Suppose firstly that $k$ is even, so that $2 \leq k \leq d/2$, and if $k=d/2$ then $\varepsilon = -$ (otherwise the action is not primitive). If $k=2$, then $\log n \geq d+2$ and $b \leq d/2+2$ (see Lemma~$9$ and Lemma~$10$ of \cite{MRD}). Thus, $b(G) \leq (\log n)/2+6$. \\
		Suppose now that $k >2$. By Theorem~\ref{basesizeHLM}, and using the information about $\mathrm{Out}(G_0)$ given in Proposition~\ref{propOrtSing}, we have $b(G) \leq d/4+14$. In \cite[Proposition~$2$]{MRD} it is proved that $\log n \geq 4d-17$. If $d \geq 10$, we have that $b(G) \leq d/4+14 \leq (4d-17)/2+6 \leq (\log n)/2+6$. So we must deal separately with the case $(d,k) = (8,4)$. In this case, $\varepsilon = -$. Suppose firstly that $\epsilon = +$. Here, we have
		\[
			n = \frac{q^8(q^6-1)(q^4+1)}{2(q^2-1)}.
		\] 
		If $q=2$, then $\mathrm{Out}(G_0)$ is cyclic, hence $b(G) \leq 14$ by Lemma~\ref{lemmaQuotCyc}, while $b(G) \leq 16$ if $q>2$. We have that $16 \leq (\log n)/2+6$ if and only if $\log n \geq 20$. However, $\log n \geq 20$ for $q>2$, while $\log n > 14$ if $q=2$, and so we are done. The case $\epsilon = -$ is similar.

		Suppose now that $k$ is odd, so that $1 \leq k < d/2$. If $k=1$, then $b(G) \leq d+1$ (see \cite[Lemma $7$]{MRD}), while
		\[
			n = \frac{q^{\frac{d-2}{2}}(q^{\frac{d}{2}}\mp 1)}{(2,q-1)} \geq \frac{4^{\frac{d-2}{2}}(4^{\frac{d}{2}}\mp 1)}{2} = 2^{d-3}(2^d\mp 1),
		\]
		since $n$ is an increasing function of $q$ and $q>3$ by hypothesis.  In particular, $\log n \geq 2d-4$, and thus we have
		\[
			b(G) \leq d+1 \leq 	\frac{1}{2}(2d-4)+6 \leq \frac{1}{2}\log n+6.	
		\] 
		Suppose finally that $k>1$, so that $q$ is odd. Then $b(G) \leq d/3+14$, while \cite[Proposition~$2$]{MRD} shows that $\log n \geq (3d-10)\log(q/2)$. 
		If $q>3$, then $\log n \geq 3d-10$. If $d\geq 12$, we have
		\[
			b(G) \leq \frac{d}{3}+14 \leq \frac{1}{2}(2d-10)+6 \leq \frac{1}{2}\log n +6,
		\] 
		So the statement is proved for $d\geq 12$ and $q>3$. We now deal with cases $(d,k) = (8,3)$ and $(d,k)=(10,3)$ under the hypothesis $q>3$.\\
		Suppose that $(d,k)=(8,3)$. Here, 
		\[
			n = \frac{1}{2}q^7(q^4-\varepsilon)\frac{q^6-1}{q^2-1},
		\]
		while $b(G) \leq 50/3$. But $50/3 \leq (\log n)/2+6$ for $q=5$ in both cases, and since $n$ is an increasing function of $q$, we are done. The case $(d,k)=(10,3)$ is done in a similar way.\\
		Suppose then that $q=3$. If $d\geq 22$, we have 
		\[
			b(G) \leq \frac{d}{3}+14 \leq \frac{1}{2}(2d-4)\log\frac{3}{2}+6 \leq \frac{1}{2}\log n+6.
		\]
		If $8 \leq d \leq 20$, we checked the validity of the inequality using the exact value of $n$ in all the possible cases.
	\end{proof}

	\begin{proposition}
		Let $G$ be an almost simple group having socle $G_0=\mathrm{P}\Omega_d(q)$, with $d \geq 7$, acting on  $S_k$, with $1 \leq k \leq (d-1)/2$, and let $n=|S_k|$. Suppose moreover that $q>3$ when $k=1$. Then $b(G) \leq (\log n)/2+6$.
	\end{proposition}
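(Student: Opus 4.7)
The plan is to parallel the strategy of Proposition~\ref{propOrtSing}. Since $d$ is odd, the natural module has odd dimension and hence $q$ is odd. For $d \geq 7$ odd, the outer automorphism group $\mathrm{Out}(G_0)$ is isomorphic to $C_2 \times C_f$ with $q = p^f$: there are no graph automorphisms, and the diagonal-over-inner quotient has order two. In particular, $G/G_0$ admits a subnormal series with cyclic quotients of length at most $2$, and $G/\mathrm{PGO}_d(q)$ is cyclic. Applying Lemma~\ref{lemmaQuotCyc} gives $b(G) \leq b(G_0) + 2$ and $b(G) \leq b(\mathrm{PGO}_d(q)) + 1$, which combined with Theorem~\ref{basesizeHLM} yields the coarse bound $b(G) \leq d/k + 12$.

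For $k = 1$, the hypothesis $q \geq 5$ is in force. I would invoke \cite[Lemma~3]{MRD}, which gives $b(\mathrm{PGO}_d(q)) \leq d-1$, hence $b(G) \leq d$. From \cite[Table~4.1.2]{BGiu}, $n = (q^{d-1}-1)/(q-1)$, so $n \geq q^{d-2}$ and $\log n \geq (d-2)\log q \geq 2(d-2)$. Thus $b(G) \leq d \leq (\log n)/2 + 6$.

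For $k = 2$, I would use the sharper bound in \cite[Lemma~5]{MRD} to control $b(\mathrm{PGO}_d(q))$ by roughly $\lceil d/2 \rceil$, obtaining $b(G) \leq d/2 + $ constant; compared with the lower bound on $\log n$ from \cite[Proposition~2]{MRD}, this delivers the desired inequality. For $k \geq 3$ (which forces $d \geq 7$), the inequality $b(G) \leq d/3 + 12$ is compared against the lower bound on $\log n$ from \cite[Proposition~2]{MRD}: for $d$ beyond an explicit threshold, this is immediate. The remaining small cases (bounded values of $d$ and $k$, typically paired with $q \in \{3,5\}$) would be handled by substituting the exact formula for $n$ from \cite[Table~4.1.2]{BGiu}, exploiting monotonicity of $n$ in $q$, and, where the general asymptotic bound still fails, by computing an explicit base for $G_0$ using the FinInG package in GAP — exactly as in the treatment of the small $\mathrm{P}\Omega_d^+(q)$ cases in Proposition~\ref{propOrtSing}.

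The principal obstacle is precisely this finite enumeration: when the general asymptotic bound $b(G) \leq d/k + 12$ is too weak to beat $(\log n)/2 + 6$, one must fall back to direct computation, and care is needed to cover every residual triple $(d,k,q)$ without omission. No new conceptual ingredient is required beyond those already assembled in the preceding propositions of this subsection.
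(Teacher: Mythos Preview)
Your proposal is correct and follows essentially the same route as the paper: the coarse bound $b(G)\le d/k+12$ via Theorem~\ref{basesizeHLM} and Lemma~\ref{lemmaQuotCyc}, the sharper \cite[Lemmas~3 and~5]{MRD} for $k=1,2$, and exact formulas plus GAP/FinInG for the finitely many residual cases. Two small corrections: the relevant $\log n$ lower bounds for odd-dimensional orthogonal groups are in \cite[Proposition~3]{MRD}, not Proposition~2; and the paper additionally splits off the case $k=(d-1)/2$ (where $\log n\ge (d^2-1)/8$), which you should treat separately rather than folding into the $k\ge 3$ analysis. Your observation that $q$ must be odd is correct and in fact cleaner than the paper's proof, which needlessly discusses $q=2$.
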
 
	\begin{proof}
		By Theorem~\ref{basesizeHLM}, $b(G_0) \leq d/k+10$. Moreover, since $\mathrm{Out}(G_0)$ has a normal series with at most two cyclic quotients, $b(G) \leq b(G_0)+2$  by Lemma~\ref{lemmaQuotCyc}, so that 
		\[
		b(G) \leq d/k+12.
		\]
		Suppose that $k=1$, so that $q > 3$. By \cite[Lemma $3$]{MRD}, $b(G) \leq d+1$, while $n=(q^{d-1}-1)/(q-1)\geq (4^{d-1}-1)/3 \geq 4^{d-2}$, so that $\log n \geq 2(d-2)$. Thus, $b(G) \leq d+1 \leq d+4 \leq (\log n)/2+6$. \\
		Suppose now that $k=2$, so that $d \geq 5$. Then \cite[Lemma $5$]{MRD} gives $b(G) \leq \lceil d/2 \rceil +1\leq d/2+2$, while
		\[
			n = \frac{\prod_{(d-3)/2}^{(d-1)/2}(q^{2i}-1)}{\prod_{i=1}^{2}(q^i-1)} = \frac{(q^{d-3}-1)(q^{d-1}-1)}{(q-1)^2(q+1)}\geq q^{2d-7}.
		\]
	 	Thus, $b(G) \leq d/2+2 \leq (2d-7)/2+6 \leq (\log n)/2+6$.\\
		Suppose now that $3 \leq k \leq (d-3)/2$, so that $d \geq 9$. We have $b(G) \leq d/3+12$. Moreover, from \cite[Proposition~$3$]{MRD}, we see that $\log n \geq d+3$. Suppose that $d\geq 27$. Then,
		\[
			b(G) \leq \frac{d}{3}+12 \leq \frac{1}{2}(d+3)+6 \leq \frac{1}{2}\log n+6.
		\]
		For $9 \leq d \leq 25$, we have verified the inequality $d/k+12 \leq (\log n)/2+6$ using the exact value of $n$. In particular, we have verified that for $q = 3$, we have 
		\[
		\frac{d}{k}+12 \leq \frac{1}{2}\log n+6.
		\] 
		Using the fact that $n$ is an increasing function of $q$, we have the desired inequality for all $q\geq 3$. For $q=2$, we again use the exact value of $n$ to see that $d/k+12 \leq (\log n)/2+6$ holds except for $(d,k)=(9,3)$. However, in this case, we have verified with $\mathrm{GAP}$ that $G_0$ has a base of cardinality $10$, thus $b(G) \leq b(G_0)+2 \leq 12$, and $12 \leq (\log n)/2+6$, so we are done. \\
		Finally, suppose that $k = (d-1)/2$. Then $b(G) \leq d/k+12 = 2d/(d-1)+12$. By \cite[Proposition~$3$]{MRD}, we have $\log n \geq (d^2-1)/8$. If $d\geq 11$, then $b(G) \leq 2d/(d-1)+12 \leq (d^2-1)/16+6 \leq (\log n)/2+6$.\\
		We must deal with the case $(d,k)=(9,4)$. In this case,
		\[
			n = (q+1)(q^2+1)(q^3+1)(q^4+1),
		\]
		and $b(G) \leq 69/5$. Now, $69/5 \leq (\log n)/2+6$ for all $q\geq 3$. For $q=2$, we have verified with $\mathrm{GAP}$ that $G_0$ has a base of cardinality $6$, and as before we are done.
	\end{proof}
	
	\begin{proposition}
		Let $G$ be an almost simple group having socle $G_0=\mathrm{P}\Omega_d(q)$, with $d \geq 7$, acting on $N_k^\epsilon$, with $1 \leq k \leq (d-1)$ even, and let $n=|N_k^\epsilon|$. Suppose moreover that $q>3$ if $d=k-1$. Then $b(G) \leq (\log n)/2+6$, where $n=|N_k^\epsilon|$.
	\end{proposition}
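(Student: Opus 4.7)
The plan is to follow the template of the two preceding propositions on orthogonal groups. The structural inputs are Theorem~\ref{basesizeHLM}(2), which gives $b(G_0) \le d/k + 11$, and the standard fact that for $G_0 = \mathrm{P}\Omega_d(q)$ with $d$ odd, $\mathrm{Out}(G_0)$ admits a normal series with at most two cyclic factors. Combined with Lemma~\ref{lemmaQuotCyc}, this yields the blanket bound
\[
b(G) \le b(G_0) + 2 \le \frac{d}{k} + 13.
\]
The corresponding lower bound on $\log n$ is extracted from \cite[Proposition~3]{MRD} and will have the approximate shape $\log n \ge 4d - O(1)$ for even $k \ge 4$, sharpening as $k$ grows.

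I would then split on $k$ (which is even by hypothesis). For $k = 2$, I use the sharper bound $b(G) \le \lceil d/2 \rceil + c$ from \cite[Lemma~9]{MRD} against $\log n \ge d + O(1)$, which forces the target inequality directly for all admissible $d$. For even $k$ with $4 \le k \le d-3$, the blanket bound $b(G) \le d/4 + 13$ combined with $\log n \ge 4d - O(1)$ suffices for all $d$ beyond an explicit threshold. The finitely many residual tuples $(d,k,q)$ are cleared, exactly as in the preceding propositions, by substituting the exact value of $|N_k^\epsilon|$ from \cite[Table~4.1.2]{BGiu}, verifying the inequality at the smallest admissible $q$, and appealing to monotonicity in $q$. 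A short list of leftover cases over $\mathbb{F}_2$ or $\mathbb{F}_3$ that resist the generic estimate would be handled by direct base computations in \texttt{FinInG} \cite{FINING} and GAP \cite{GAP}, followed by Lemma~\ref{lemmaQuotCyc}.

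The boundary case $k = d-1$ (corresponding to the condition on $q$ stated in the proposition) deserves separate attention. The hypothesis $q > 3$ is precisely what excludes the entry $(\mathrm{P}\Omega_d(q), N_{d-1})$ with $q = 3$ listed in Table~\ref{tab:md}. Since the stabilizer of a non-degenerate $(d-1)$-space also stabilizes its $1$-dimensional orthogonal complement, $|N_{d-1}^\epsilon|$ is comparable to the count of non-singular $1$-spaces of a prescribed type, so that $\log n \ge 2(d-2)$ for $q \ge 4$. The crude bound $b(G) \le d/(d-1) + 13 \le 14$ then fits under $(\log n)/2 + 6$ provided $d$ is above a small explicit threshold, and the few remaining low-dimensional sub-cases are dispatched by direct computation of $n$.

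The main obstacle I expect is precisely this boundary regime $k = d-1$ together with the low-dimensional residues at $q \in \{2,3\}$: there the ratio $d/k$ in the blanket bound collapses to essentially $1$, so the unadorned constant $13$ is of the same order as the target, and one is forced to exploit the sharper explicit formulas for $|N_{d-1}^\epsilon|$ or to invoke computer-assisted base computations, mirroring the pattern established in Proposition~\ref{propOrtSing}.
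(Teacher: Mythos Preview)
Your plan is essentially the paper's proof: the same blanket bound $b(G)\le d/k+13$ from Theorem~\ref{basesizeHLM} and Lemma~\ref{lemmaQuotCyc}, the same split into $k=2$, $4\le k\le d-3$, and $k=d-1$, and the same device of clearing residual small tuples via the exact value of $n$ and monotonicity in $q$. Two minor corrections: for $k=2$ the relevant citation is \cite[Lemma~10]{MRD} rather than Lemma~9, and for $4\le k\le d-3$ the bound from \cite[Proposition~3]{MRD} is $\log n\ge (3d-9)\log q-2$, not $4d-O(1)$ uniformly; this still suffices since $q$ is odd here.

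The one substantive difference is in the case $k=d-1$. You observe that the stabilizer of a non-degenerate $(d-1)$-space also stabilizes its $1$-dimensional complement, but you only use this to estimate $n$. The paper uses the same observation to conclude that the action on $N_{d-1}^\epsilon$ is \emph{permutation-equivalent} to the action on $N_1^\epsilon$, whence $b(G)$ is literally the base size for that latter action, and \cite[Lemma~7]{MRD} gives $b(G)\le d$ directly. With $q>3$ one then has $\log n\ge 2(d-2)$ and the inequality falls out for all $d\ge 7$ with no residual cases. Your route via $b(G)\le 14$ leaves $(d,q)\in\{(7,5),(7,7)\}$ unresolved by the degree estimate alone; these could be cleared by machine computation as you suggest, but exploiting the equivalence for the base size (not just the degree) avoids this entirely.
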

	
	\begin{proof}
		By Lemma~\ref{lemmaQuotCyc} and Theorem~\ref{basesizeHLM},
		\[
			b(G) \leq \frac{d}{k}+13.
		\]
		Suppose firstly that $k=2$. In this case $\log n \geq d$ (\cite[Proposition~$3$]{MRD}), while $b(G) \leq \lceil d/2 \rceil +1 \leq d/2+2$ (\cite[Lemma~$10$]{MRD}). We have moreover that 
		\[
			n = \frac{q^{d-2}(q^{d-1})}{2(q\mp 1)}\geq q^{2d-6},
		\]
		and so $\log n \geq 2d-6$. Thus, $b(G) \leq d/2+2 \leq d+3 \leq (\log n)/2+6$.\\
		Suppose now that $4 \leq k \leq d-3$, so that $\log n \geq (3d-9)\log q-2$ (see \cite[Proposition~$3$]{MRD}). If $q\geq 3$, and $d>7$, we have
		\[
			b(G) \leq \frac{d}{4}+13 \leq \frac{1}{2}(3d-9)\log 3 +5 \leq \frac{1}{2}\log n +6.
		\] 
		If $d=7$ (and then $k=4$), a direct computation using the exact value of $n$ shows that, for $q=3$, we have $7/4+13 \leq (\log n)/2+6$, and since $n$ is an increasing function of $q$, we are done.\\		
		Suppose now that $q=2$ and that $d\geq 9$. Thus, $b(G) \leq d/4+13 \leq (3d-9)/2+6 \leq (\log n)/2+6$. If $(d,k,q) = (9,4,2)$, we have verified with $\mathrm{GAP}$ that, in both cases, $b(G) \leq 6\leq (\log n)/2+6$.\\
		Finally suppose that $k=d-1$, so that $q>3$. Then, $b(G) \leq d$: this follows by noting that the action is equivalent to the one of $G$ on $N_1^\epsilon$, and from \cite[Lemma~$7$]{MRD}. By \cite[Proposition~$3$]{MRD} $n \geq (1/4)q^{d-1} \geq 4^{d-2}$, so that $\log n \geq 2(d-2)$. Thus, $b(G) \leq d \leq d+4 \leq (\log n)/2+6$, and this concludes the proof.		
	\end{proof}
	\begin{proposition}
		Let $G$ be an almost simple group having socle $G_0 = \mathrm{PSp}_d(q)$, with $d\geq 4$, acting on $S_k$, with $1 \leq k \leq d/2$, and let $n=|S_k|$. Suppose moreover that $q>3$ when $k=1$. Then, $b(G) \leq (\log n)/2+6$.
	\end{proposition}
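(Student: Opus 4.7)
The plan is to follow the blueprint used in the preceding propositions for the other classical families. First, by Theorem~\ref{basesizeHLM}(1) (which applies since $G_0=\mathrm{PSp}_d(q)$ is not of the excluded form $\mathrm{P}\Omega_{2m}^+$) we have $b(G_0)\le d/k+10$. For $\mathrm{PSp}_d(q)$ the group $\mathrm{Out}(G_0)$ admits a normal series with cyclic quotients of length at most $2$ (the diagonal--field series, with one extra graph quotient in the exceptional case $d=4$, $q$ even, which is still absorbed into a cyclic quotient series of length $2$ once we quotient by the inner-diagonal piece). Lemma~\ref{lemmaQuotCyc} then gives the baseline bound
\[
b(G)\le \frac{d}{k}+12.
\]

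Next I would split by $k$ exactly as in Proposition~\ref{PSUtotsing}. For $k=1$, where $S_1$ consists of all $1$-spaces (every vector is isotropic for a symplectic form), we have $n=(q^d-1)/(q-1)$; since $q\ge 4$ this gives $\log n\ge 2(d-1)$, while \cite[Lemma~3]{MRD} provides $b(\mathrm{PGSp}_d(q))\le d$, so Lemma~\ref{lemmaQuotCyc} gives $b(G)\le d+1\le (\log n)/2+6$. For $k=2$ I would combine the sharper estimate $b(G)\le \lceil d/2\rceil+1$ from \cite[Lemma~5]{MRD} with the standard lower bound on $|S_2|$ (of the form $n\ge q^{2d-5}$ or so, obtained from the closed formula $n=(q^d-1)(q^{d-2}-1)/((q^2-1)(q-1))$ in \cite[Table~4.1.2]{BGiu}), which easily yields the desired inequality.

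For $3\le k\le d/2-1$ (so $d\ge 8$) I would apply $b(G)\le d/3+12$ together with the lower bound $\log n\ge d+3$ coming from \cite[Proposition~4]{MRD} (or a direct estimate of $|S_k|$ from the product formula): for $d\ge 18$ this gives
\[
b(G)\le \frac{d}{3}+12\le \frac{d+3}{2}+6\le \frac{1}{2}\log n+6.
\]
The remaining moderate dimensions $d\in\{8,10,\ldots,16\}$ I would handle by substituting the exact value of $n$, starting with $q=2,3$ (where $n$ is smallest) and exploiting monotonicity of $n$ in $q$, exactly as in Proposition~\ref{propOrtSing}. Finally, the case $k=d/2$ (maximal totally isotropic, so the Lagrangian action) satisfies $\log n\ge d(d+1)/4$ while $b(G)\le 14$, which is comfortable for $d\ge 8$; for $d\in\{4,6\}$ the bound $n=\prod_{i=1}^{d/2}(q^i+1)$ together with the existing explicit base-size results in \cite{HLM} handles it, possibly requiring a short \textrm{GAP} check at $q=2,3$.

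The main obstacle is the same one that made the orthogonal and unitary cases lengthy: for small $d$ and small $q$ (especially $q\in\{2,3\}$) the asymptotic inequality $d/k+12\le (\log n)/2+6$ just barely fails, and one must either compute $n$ exactly or invoke a direct \textrm{GAP}/\textrm{FinInG} base computation on $G_0$ to finish. I expect no conceptual difficulty, only a finite bookkeeping of these edge cases.
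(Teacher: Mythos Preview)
Your proposal follows essentially the same approach as the paper: the same baseline $b(G)\le d/k+12$ via Theorem~\ref{basesizeHLM} and Lemma~\ref{lemmaQuotCyc}, the same case split on $k$, and the same strategy of falling back on exact values of $n$ and \textrm{GAP} for the small-$(d,q)$ corners. A few of your quoted bounds are slightly off (for $k=2$ the paper uses $b(G)\le d$ from \cite[Lemma~5]{MRD} rather than $\lceil d/2\rceil+1$; for $k=d/2$ the correct estimate is $\log n\gtrsim d(d+2)/8$, so $d=8$ is \emph{not} comfortable and must join the explicit checks alongside $d\in\{4,6\}$), but these are bookkeeping slips, not gaps in the strategy.
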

	\begin{proof}
		$\mathrm{Out}(G_0)$ has a normal series with all quotient cyclic of length at most two. Thus, by Lemma~\ref{lemmaQuotCyc} and Theorem~\ref{basesizeHLM},
		\[
			b(G) \leq \frac{d}{k}+12.
		\]
		If $k=1$, then $m=(q^d-1)(q-1)$ while $b(G) \leq d+2$, and the result follows with the same computation as in the linear case.\\
		Suppose that $k=2$. By \cite[Lemma~$5$]{MRD}, if $d = 4$, then $b(G) \leq 5$ and if $d > 4$ then $b(G) \leq d$. In each case, a routine computation shows that $b(G) \leq (\log n)/2+6$, since $\log n \geq 2d-5$ (see \cite[Proposition $4$]{MRD}). \\
		Suppose now that $3 \leq k \leq d/2-1$. Then $b(G) \leq d/3+12$. Moreover, by \cite[Proposition~$4$]{MRD}, $\log n \geq d+5$. If $d\geq 21$, we have $b(G) \leq d/3+12 \leq (d+5)/2+6 \leq (\log n)/2+6$. For $d=4,\dots,21$ and all the possible values of $k$, we have checked the validity of the inequality using the exact value of $n$. In particular, we have checked it for $q=2$ and then we conclude using the fact that $n$ is an increasing function of $q$.\\
		Finally, suppose that $k=d/2$, so that $b(G) \leq 14$. In this case,
		\[
			n=\prod_{i=1}^{d/2}(q^i+1).
		\]
		If $d=10$ and $q=2$, then
		\[
		\frac{1}{2}\log n + 6 \geq 14
		\]
		Since $n$ is an increasing function of both $d$ and $q$, we have that $b(G) \leq 14 \leq (\log n)/2+6$ for all $d\geq 10$ and $q$.\\
		Suppose now that $d\leq8$.
		If $d=4$, \cite[Lemma~$5$]{MRD} shows that $b(G_0) \leq 4$, so that $b(G) \leq 6 \leq (\log n)/2+6$.\\
		Suppose that $d = 6$. Thus, $n = (q+1)(q^2+1)(q^3+1)$, and 
		\[
		14 \leq \frac{1}{2}\log n+6
		\]
		for $q \geq 7$. For $q\in\{2,3,4,5\}$, we have verified with $\mathrm{GAP}$ the inequality.\\
		The case $d=8$ is done in the same way.
	\end{proof}
	
	\begin{proposition}
		Let $G$ be an almost simple group having socle $G_0 = \mathrm{PSp}_d(q)$ with $d\geq 4$, acting on $N_k$ , with $1 \leq k \leq d/2-1$ even, and let $n=|N_k|$. Then, $b(G) \leq (\log n)/2+6$.
	\end{proposition}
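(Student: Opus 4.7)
The plan is to follow the template established throughout this subsection. By Theorem~\ref{basesizeHLM}(2), $b(G_0) \leq d/k + 11$, and since $\mathrm{Out}(\mathrm{PSp}_d(q))$ has a normal series with all quotients cyclic of length at most two, Lemma~\ref{lemmaQuotCyc} yields the baseline estimate
\[
b(G) \leq b(G_0) + 2 \leq \frac{d}{k} + 13.
\]
Because $k$ is even with $2 \leq k \leq d/2 - 1$ (which forces $d \geq 6$), the natural split is between $k = 2$ and $k \geq 4$. In each range the strategy will be to combine a refined upper bound on $b(G)$ from \cite{MRD} with an asymptotic lower bound on $\log n$ from \cite[Proposition~4]{MRD} and the explicit formula for $|N_k|$ in \cite[Table~4.1.2]{BGiu}.

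For $k = 2$, I would invoke the sharper bound $b(G) \leq \lceil d/2 \rceil + 1$ from \cite[Lemma~10]{MRD}. The explicit formula for $|N_2|$ gives a lower estimate of the shape $\log n \geq 2d - c$ for a small absolute constant $c$, and the desired inequality $b(G) \leq (\log n)/2 + 6$ then follows by a direct comparison, exactly as in the orthogonal analogue handled earlier in this subsection.

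For $4 \leq k \leq d/2 - 1$, I would use the baseline bound $b(G) \leq d/4 + 13$ together with a lower bound of the form $\log n \geq (3d - c)\log q$ supplied by \cite[Proposition~4]{MRD}. This settles the inequality for all sufficiently large $d$ (a threshold of roughly $d \geq 20$ should suffice, independently of $q$). The remaining finitely many pairs $(d,k)$ with $d$ below the threshold would then be handled by evaluating $n$ exactly from \cite[Table~4.1.2]{BGiu}; since $n$ is an increasing function of $q$, checking the inequality at $q = 2$ and $q = 3$ propagates it to all larger $q$.

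The main obstacle, as in every preceding proposition of this subsection, will be the residual small-dimensional cases at $q = 2$, where $\log q = 1$ weakens the asymptotic lower bound and the margin between $d/k + 13$ and $(\log n)/2 + 6$ becomes tight. For any such case not covered by the asymptotic argument, I would follow the precedent set in Proposition~\ref{propOrtSing} and rely on a $\mathrm{GAP}$ computation (via the $\mathrm{FinInG}$ package if needed) to exhibit an explicit small base for $G_0$, from which the bound for $b(G)$ follows through Lemma~\ref{lemmaQuotCyc}. The overall architecture of the argument is otherwise entirely parallel to the orthogonal $N_k^\epsilon$ case.
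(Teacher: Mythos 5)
Your proposal is correct and follows essentially the same route as the paper: the baseline bound $b(G)\leq d/k+O(1)$ from Theorem~\ref{basesizeHLM} and Lemma~\ref{lemmaQuotCyc}, a sharper bound from \cite{MRD} when $k=2$, and the lower bounds on $\log n$ from \cite[Proposition~4]{MRD} for $k>2$. The only difference is that the paper uses the $q$-independent bound $\log n\geq d+13$ for $k>2$, which makes $d/3+12\leq(\log n)/2+6$ hold for every $d$ at once, so the residual small-$(d,k)$ cases and $\mathrm{GAP}$ computations you anticipate never arise in this proposition.
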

	\begin{proof}
		As in the previous proposition, using Lemma~\ref{lemmaQuotCyc} and Theorem~\ref{basesizeHLM}, we have
		\[
			b(G) \leq \frac{d}{k}+12.
		\]
		Suppose that $k=2$. Then by \cite[Proposition~$4$]{MRD} $\log n \geq 2d-4$ while, by \cite[Lemma~$9$]{MRD}, $b(G) \leq d$. Using these information we get $b(G) \leq d \leq (\log n)/2+6$. If $k>2$, then \cite[Proposition~$4$]{MRD} shows that $\log n \geq d+13$. Thus, $b(G) \leq d/3+12 \leq (d+13)/2+6 \leq (\log n)/2+6$.
	\end{proof}
	Finally, we take into account groups of type~$2.2$ of Definition \ref{defintionsubspacesubgrp}. 	
	\begin{proposition}
		Let $G$ be an almost simple group with socle $G_0 = \mathrm{PSp}_d(q)$, with $d\geq 4$. Let $M = N_G(\mathrm{GO}_d^\epsilon(q))$, with $q\geq 4$ even, and let $\Omega=[G:M]$ be the right coset space, with $n = |\Omega|$. Then, $b(G) \leq (\log n)/2+6$.
	\end{proposition}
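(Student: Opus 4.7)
The plan is to follow the same three-step template used throughout this subsection: bound $b(G_0)$, lift to $b(G)$ via the outer-automorphism structure, and compare with a lower bound on $n$. This is the remaining subspace type, namely case~(\ref{definition6.2:2}) of Definition~\ref{defintionsubspacesubgrp}: the socle is $G_0 = \mathrm{PSp}_d(q) \cong \mathrm{Sp}_d(q)$ with $d = 2m$ even and $q = 2^f \geq 4$, while $M \cap G_0 = \mathrm{GO}_{2m}^\varepsilon(q)$ is the stabilizer in $\mathrm{Sp}_{2m}(q)$ of a non-degenerate quadratic form of type $\varepsilon \in \{+,-\}$ polarising to the symplectic form.

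For the socle I would invoke a bound of the form $b(G_0) \leq d + c$, for a small absolute constant $c$, which is available in \cite{MRD} (building on \cite{Bur07,BGiu}) for this action. Since $q$ is even, $\mathrm{Out}(\mathrm{PSp}_d(q))$ is cyclic (generated by the field automorphism), except when $d = 4$, where a graph automorphism contributes an additional factor; in every case $G/G_0$ has a cyclic normal series of length at most $2$, and so Lemma~\ref{lemmaQuotCyc} gives $b(G) \leq b(G_0) + 2$. For the degree, the classical order formulas for $\mathrm{Sp}_{2m}(q)$ and $\mathrm{GO}_{2m}^\varepsilon(q)$ yield
\[
    n = [G_0 : G_0 \cap M] = \frac{q^m(q^m \pm 1)}{2} \geq \frac{q^{2m}}{4} = \frac{q^d}{4},
\]
so that $\log n \geq d\log q - 2 \geq 2d - 2$, using $q \geq 4$. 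Combining,
\[
    b(G) \leq b(G_0) + 2 \leq d + c + 2 \leq \frac{1}{2}(2d - 2) + 6 = \frac{1}{2}\log n + 6,
\]
provided $c \leq 5$, which is the range supplied by the cited base-size results.

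The main obstacle is locating an explicit linear-in-$d$ bound for $b(G_0)$ on this particular subspace action: it is less standard than the $S_k$ or $N_k$ actions treated earlier in the subsection. A secondary issue is that the implicit constant $c$ tends to degrade in very small cases, so a direct $\mathrm{GAP}$ verification will probably be needed for a handful of low-dimensional pairs (for instance $d = 4$ with $q \in \{4, 8\}$), entirely analogous to the $\mathrm{GAP}$ checks carried out in several of the preceding propositions.
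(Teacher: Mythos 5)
Your proposal follows essentially the same route as the paper: a linear bound $b(G_0)\le d+1$ (from \cite{HLM}), the lift through Lemma~\ref{lemmaQuotCyc} using the outer-automorphism structure for $q$ even, and the degree formula $n=q^{d/2}(q^{d/2}+\epsilon)/2\ge q^d/4$, giving $\log n\ge 2d-2$. One small slip: your final chain requires $d+c+2\le \tfrac12(2d-2)+6=d+5$, i.e.\ $c\le 3$, not $c\le 5$ as you state; this is harmless because \cite{HLM} supplies $c=1$, and in fact $\mathrm{Out}(G_0)$ is cyclic here, so $b(G)\le b(G_0)+1=d+2$ already suffices and no low-dimensional $\mathrm{GAP}$ checks are needed.
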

	\begin{proof}
		In \cite{HLM}, it is proved that $b(G_0) \leq d+1$. Moreover, since $\mathrm{Out}(G_0)$ is cyclic, by Lemma~\ref{lemmaQuotCyc}, $b(G) \leq d+2$. Using \cite{BGiu}, we see that $n =q^{d/2}(q^{d/2}+\epsilon)/2$. Since $q\geq 4$, $n \geq 2^d(2^d+\epsilon)/2\geq 2^{2d-2}$, so that $\log n \geq 2d-2$. Thus, $b(G) \leq d+2 \leq d+5 \leq (\log n)/2+6$.
	\end{proof}
	We now take into account all the remaining subspace actions. By Definition~\ref{defintionsubspacesubgrp}, consulting \cite{bhr} we see that the remaining cases are the following.
	\begin{enumerate}
		\item\label{HiHo1} $G_0 = \mathrm{PSL}_d(q)$ and $G$ is acting on pair of subspaces,
		\item\label{HiHo2} $G_0=\mathrm{Sp}_4(2^a)$, $G$ contains a graph automorphism and $G_0$ is acting on the cosets of a local subgroup $H$ of type $[q^4]:C_{q-1}^2$, see~\cite[Table~8.14]{bhr},
		\item\label{HiHo3} $G_0=\mathrm{P}\Omega_8^+(q)$, $G$ contains a triality graph automorphism and $G_0$ is acting on the cosets of a local subgroup $H$ of type $$[q^{11}]:\left[\frac{q-1}{d}\right]^2\cdot\frac{1}{d}\mathrm{GL}_2(q)\cdot d^2,$$where $d=\gcd(2,q-1)$, see~\cite[Table~8.50]{bhr},
		\item\label{HiHo4} $G_0=\mathrm{P}\Omega_8^+(q)$, $G$ contains a triality graph automorphism and $G_0$ is acting on the cosets of a subgroup $H$ isomorphic to $G_2(q),$ see~\cite[Table~8.50]{bhr},
	\end{enumerate}

	We begin with the actions of linear groups acting on pair of subsets. To do this, we need to describe these actions, and firstly we set some notation.\\
	Recall that, for $d \geq 3$, we have
	\[
		\mathrm{Aut}(\mathrm{PSL}_d(q)) = \mathrm{P}\Gamma\mathrm{L}_d(q):\langle \iota \rangle,
	\]
	where $\iota$ is the graph automorphism of $G$, defined by
	\begin{align*}
		\iota : \mathrm{PSL}_d(q) &\to \mathrm{PSL}_d(q) \\
				x &\mapsto (x^{-1})^T,
	\end{align*}
	where $g^T$ is the transpose of $g$. \\
	Now if $\mathrm{PSL}_d(q) \trianglelefteq G \nleq \mathrm{P}\Gamma \mathrm{L}_d(q)$, then $G$ acts primitively in the following domain. For $1\leq k < d/2$, we let
	\begin{align*}
		\Omega_k^1 &= \{\{U,W\}\, : \, \dim U = k, U \oplus W = V\},\\
		\Omega_k^2 &= \{\{U,W\}\, : \, \dim U = k, \dim W = n-k, U \subseteq W\},
	\end{align*}
	where $\iota$ interchanges the two subspaces of the pair ${U,W} \in \Omega_k^i$, with $i=1,2$. 
	\begin{proposition}
		Let $\mathrm{PSL}_d(q) \trianglelefteq G \nleq \mathrm{P}\Gamma \mathrm{L}_d(q)$ acts primitively on $\Omega_k^1$ or $\Omega_k^2$ for $1 \leq k < d/2$ and let $n=|\Omega_k^i|$, for $i=1,2$. Then, $b(G) \leq (\log n)/2+6$.
	\end{proposition}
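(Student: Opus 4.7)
The plan is to reduce the problem to the action of the index-$2$ subgroup $H := G \cap \mathrm{P}\Gamma\mathrm{L}_d(q)$ on the Grassmannian $P_k$, for which a sharp bound is already available from the earlier proposition on $\mathrm{PSL}_d(q)$-groups acting on $P_k$. Since $1 \leq k < d/2$ forces $d \geq 3$, the graph automorphism $\iota$ generates the cyclic quotient $\mathrm{Aut}(\mathrm{PSL}_d(q))/\mathrm{P}\Gamma\mathrm{L}_d(q)$ of order $2$, and the hypothesis $G \nleq \mathrm{P}\Gamma\mathrm{L}_d(q)$ gives $[G:H]=2$. Lemma~\ref{lemmaQuotCyc} then yields $b(G) \leq b(H)+1$.

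Next I will show $b(H, \Omega_k^i) \leq b(H, P_k)$. Since $H \leq \mathrm{P}\Gamma\mathrm{L}_d(q)$ preserves dimensions and $\dim U = k \neq d-k = \dim W$ for any $\omega = \{U,W\} \in \Omega_k^i$, the stabilizer in $H$ of $\omega$ is $H_\omega = H_U \cap H_W \leq H_U$. Starting from a base $\{U_1,\dots,U_s\}$ for $H$ on $P_k$, I will form $\omega_j := \{U_j, W_j\}$ by choosing, for each $U_j$, any complement $W_j$ (for $\Omega_k^1$) or any $(d-k)$-space $W_j$ containing $U_j$ (for $\Omega_k^2$). Then
\[
\bigcap_{j=1}^{s} H_{\omega_j} \;\leq\; \bigcap_{j=1}^{s} H_{U_j} \;=\; 1,
\]
so $\{\omega_1,\dots,\omega_s\}$ is a base for $H$ on $\Omega_k^i$. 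Combining gives $b(G) \leq b(H, P_k) + 1$.

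Assume first that $k \geq 2$, or $k=1$ with $q \geq 4$. The earlier proposition on $\mathrm{PSL}_d(q)$ acting on $P_k$ then applies to $H$ and yields $b(H, P_k) \leq \frac{1}{2}\log |P_k| + 6$. To absorb the extra $+1$, I will use the orbit-stabilizer counts $|\Omega_k^1| = |P_k|\cdot q^{k(d-k)}$ and $|\Omega_k^2| = |P_k|\cdot \binom{d-k}{k}_q$. Since $k \geq 1$ and $d-k \geq 2$, a short check shows that in this generic range both ratios are at least $4$, so $\log n \geq \log|P_k| + 2$ and
\[
b(G) \leq \tfrac{1}{2}\log|P_k| + 7 \leq \tfrac{1}{2}\log n + 6.
\]

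The remaining cases $k=1$ and $q \in \{2,3\}$ (which also include the single small exception $(i,k,d,q)=(2,1,3,2)$ where the ratio $|\Omega_k^i|/|P_k|$ drops to $3$) will be handled directly. When $q \in \{2,3\}$ we have $\mathrm{P}\Gamma\mathrm{L}_d(q) = \mathrm{PGL}_d(q)$, and the classical dual-basis argument at $\{[e_1],\dots,[e_d],[e_1+\cdots+e_d]\}$ gives $b(H, P_1) \leq d+1$, hence $b(G) \leq d+2$. On the other hand, a direct count shows that $n$ is of order $q^{2d}$ up to a constant factor, so $\frac{1}{2}\log n + 6 \geq d+2 \geq b(G)$ is immediate. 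The main obstacle is this small-case bookkeeping; conceptually, the reduction to $H$ acting on $P_k$ does all the work.
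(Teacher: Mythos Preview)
Your argument is correct and takes a cleaner route than the paper's. Both proofs rest on the same core reduction: for $\omega=\{U,W\}\in\Omega_k^i$ the stabilizer in any subgroup of $\mathrm{P}\Gamma\mathrm{L}_d(q)$ is contained in the stabilizer of $U$, so a base on $P_k$ yields one on $\Omega_k^i$. The paper then pulls the numerical bound $b(G_0,P_k)\le d/k+5$ from Theorem~\ref{basesizeHLM}, lifts it to $b(G)\le d/k+8$ via Lemma~\ref{lemmaQuotCyc}, and verifies $d/k+8\le(\log n)/2+6$ by computing $n=|\Omega_k^i|$ explicitly in three separate regimes $k=1$, $k=2$, $k\ge 3$ for each of $\Omega_k^1$ and $\Omega_k^2$. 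You instead invoke the already-established inequality $b(H,P_k)\le(\log|P_k|)/2+6$ for the index-$2$ subgroup $H=G\cap\mathrm{P}\Gamma\mathrm{L}_d(q)$, and absorb the extra $+1$ from Lemma~\ref{lemmaQuotCyc} uniformly via the single observation $|\Omega_k^i|\ge 4\,|P_k|$. This bypasses all the case-by-case size estimates. The price is that the exceptional range $k=1$, $q\in\{2,3\}$ (excluded from the earlier $P_k$ proposition) must be handled by hand, but as you note this is a two-line calculation using $b(H,P_1)\le d+1$ and $n\gtrsim q^{2d-3}$. The paper's approach has the mild advantage of being self-contained at this point in the argument; yours has the advantage of not repeating work already done.
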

	\begin{proof}
		We start with $\Omega_k^1$.\\
		As noted in \cite{HLM}, we have that $b(G) \leq b_k(G)$, where with $b_k(G)$ we denote the base size of $G$ acting on the set $P_k$ of $k$-dimensional subspaces of $V$. Thus, using Theorem~\ref{basesizeHLM} and Lemma~\ref{lemmaQuotCyc}, we have that $b(G) \leq d/k+8$. Using \cite{BGiu}, we see that
		\[
			n =  q^{k(d-k)} \prod_{i=1}^{k}\frac{q^{d-k+i}-1}{q^i-1}.
		\]
		Suppose first that $k=1$. In this case, $b(G) \leq b_1(G) \leq d+4$, while
		\[
			n = \frac{q^{d-1}(q^d-1)}{q-1} \geq q^{2d-2}.
		\]
		Thus, $b(G) \leq d+4 \leq d+5 \leq (\log n)/2+6$. \\
		Suppose now that $k = 2$, so that $d>4$. By \cite[Lemma~$4$]{MRD}, $b(G) \leq b_k(G) \leq \lceil d/2 \rceil +3 \leq d/2+4$. Moreover,
		\[
			n = \frac{q^{2(d-2)}(q^{d-1}-1)(q^d-1)}{(q-1)(q^2-1)}\geq q^{4d-9}.
		\]
		Now we have $b(G) \leq d/2+4 \leq 2d-9/2+6 \leq (\log n)/2+6$ for $d>4$.\\
		Assume finally that $3 \leq k < d/2$, so that $d\geq 7$. In this case, we have
		\[
			n = q^{k(d-k)} \prod_{i=1}^{k}\frac{q^{d-k+i}-1}{q^i-1}=q^{k(d-k)}\frac{q^d-1}{q-1}\cdot\frac{q^{d-1}}{q^2-1}\cdot\frac{q^{d-2}}{q^3-1}\prod_{i=4}^{k}\frac{q^{d-k+i}-1}{q^i-1}\geq q^{3d/2}q^{d+3}=q^{5d/2+3}.
		\]
		In conclusion, $b(G) \leq b_k(G) \leq d/3+8 \leq 5d/4+3/2+6  \leq (\log n)/2+6$. \\
		We now deal with $\Omega_k^2$. By \cite{BGiu}, we see that
		\[
			n = \frac{\prod_{i=d-2k+1}^{d}(q^i-1)}{\prod_{i=1}^k (q^i-1)^2}.
		\]
		Suppose now that $k=1$. Thus, $b(G) \leq b_1(G) \leq d+1$. Moreover,
		\[
			n=\frac{(q^{d-1}-1)(q^d-1)}{(q-1)^2} \geq q^{2d-3},
		\]
		so that $b(G)\leq d+1\leq d-3/2+6 \leq (\log n)/2+6$.\\
		Suppose now that $k=2$, so that $d>4$. Thus, $b(G) \leq d/2+4$ by \cite[Lemma~$4$]{MRD} while
		\[
			n = \frac{(q^{d-3}-1)(q^{d-2}-1)(q^{d-1}-1)(q^{d}-1)}{(q-1)^2(q^2-1)^2} \geq q^{4d-12}.
		\]
		If $d\geq 7$, $b(G) \leq d/2+4 \leq 2d-6 \leq (\log n)/2+6$. For $d=5,6$ we use the exact value of $n$ to see that, for $q=2$, we have that $b(G) \leq (\log n)/2+6$.. The result follows then by noting that $n$ is an increasing function of $q$.\\
		Suppose finally that $3\leq k < d/2$, so that $d>6$. We have
		\[
			n = \frac{\prod_{i=1}^{2k}(q^{i+d-2k}-1)}{\prod_{i=1}^{k}(q^i-1)^2} \geq \frac{\prod_{i=1}^{2k}q^{i+d-2k-1}}{\prod_{i=1}^{k}q^{2i}}=q^{-3k^2+2kd-2k}.
		\]
		Suppose now that $d\geq 16$, and consider the quadric $f(k) = -3k^2+2kd-2k$ with $3 \leq k < d/2$. With a simple computation, it is easy to see that $f(k)$ reaches it minimums for $k=3$, since $d \geq 16$. Thus, $f(k) \geq f(3) = 6d-33$. As before, $b(G) \leq b_k(G) \leq d/k+8 \leq d/3+8$ by Theorem~\ref{basesizeHLM} and Lemma~\ref{lemmaQuotCyc}. Now, $ b(G) \leq d/3+8 \leq  3d-33/2+6 \leq (\log n)/2+6  $ for $d \geq 16$, and so we are done. \\
		If $d< 16$, we compute the minimum point $m_{d,k}$ of the quadric $f$ for each value of $d$ and $k$, and a direct computation shows then that $d/k + 8 \leq f(m_{d,k})/2+6$ for all $q\geq 2$.
	\end{proof}
	\begin{proposition}
		Let $G$ and $H$ be as in case~$\mathrm{\ref{HiHo2}}$, and let $n = [G_0:H]$. Then, $b(G) \leq (\log n)/2+6$.
	\end{proposition}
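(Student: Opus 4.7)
The key observation is that $|H| = q^4(q-1)^2$ matches the order of a Borel subgroup of $G_0 = \mathrm{Sp}_4(q)$, with the normal $[q^4]$ being the unipotent radical and $C_{q-1}^2$ the split maximal torus. A direct computation then gives
\[
n = [G_0 : H] = \frac{q^4(q^2-1)(q^4-1)}{q^4(q-1)^2} = (q+1)^2(q^2+1),
\]
and the action of $G_0$ on $G_0/H$ is equivalent to the action on flags $(P, L)$, where $P$ is a totally isotropic point and $L$ a totally isotropic line through $P$ in the natural symplectic polar space. Note that this action of $G_0$ alone is imprimitive, since $H$ lies in two maximal parabolics (the stabilizers of $P$ and of $L$); adjoining the graph automorphism $\rho$, which interchanges these two parabolics, makes $H\langle\rho\rangle$ maximal in $G_0\langle\rho\rangle$, which is why the hypothesis requires $G$ to contain $\rho$.

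Next I would bound $b(G_0)$ by a small absolute constant using the Bruhat geometry. Choosing a Borel $B_1$ and its opposite $B_2$ gives $B_1 \cap B_2 = T$, the split torus of order $(q-1)^2$, and since $q$ is even we have $Z(G_0) = 1$. A suitably generic third flag yields a Borel $B_3$ with $B_3 \cap T = 1$, producing $b(G_0) \leq 3$ for $q$ large enough, with room for one additional flag in borderline small cases. Since $\mathrm{Out}(G_0) = C_{2a}$ is cyclic, Lemma~\ref{lemmaQuotCyc} then gives $b(G) \leq b(G_0) + 1$, so $b(G)$ is bounded by a small absolute constant (say $5$ or $6$).

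The final numerical check is straightforward: since $n \geq 45$ already when $q = 2$, we have $(\log n)/2 + 6 \geq 8$, which dominates our bound on $b(G)$; for $q \geq 4$ the slack increases rapidly since $\log n \geq 4\log q$. For any residual problematic small cases (e.g.\ $q \in \{2, 4\}$) one can invoke a direct $\mathrm{GAP}$ computation, in the style of the preceding propositions of this section, to verify the bound explicitly.

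The main obstacle is establishing the uniform bound on $b(G_0)$, since the flag-variety action is not covered directly by Theorem~\ref{basesizeHLM}. The geometric three-Borel argument sketched above is essentially classical but needs some care for small $q$; an alternative is the Liebeck--Shalev probabilistic method, applied with fixed-point-ratio bounds on the flag variety, which gives a uniform bound at the cost of heavier machinery. Either route yields a constant bound on $b(G)$ that trivially satisfies the desired inequality.
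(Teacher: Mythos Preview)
Your approach is sound in outline and would yield a valid proof if completed, but it differs substantially from the paper's. The paper's proof is a one-line citation: it invokes \cite[Section~7]{MasS}, which already shows that $G$ admits a base of cardinality~$5$, whence $b(G)\le 5\le (\log n)/2+6$ trivially. No computation of $n$, no geometric description of the action, and no separate treatment of $G_0$ versus $G$ is needed.

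Your route is more self-contained and more informative: you correctly identify $H$ as a Borel subgroup, compute $n=(q+1)^2(q^2+1)$, and recognise the action as that on full isotropic flags, with the graph automorphism swapping the two maximal parabolics. The opposite-Borel argument you sketch (two opposite Borels intersect in the split torus, a third generic Borel kills the torus) is the classical way to bound the base size on the flag variety and does give $b(G_0)\le 3$ for $q$ not too small; combined with the cyclicity of $\mathrm{Out}(G_0)\cong C_{2a}$ and Lemma~\ref{lemmaQuotCyc}, this yields $b(G)\le 4$ or $5$. The remaining numerical check is then immediate, exactly as you say.

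The only genuine gap is that the three-Borel step is asserted rather than proved: you would need to exhibit, for each $q\ge 4$, a concrete third flag whose stabiliser meets the torus trivially (or fall back on a GAP check for $q=4$, as you suggest). One minor slip: the case $q=2$ does not arise here, since $\mathrm{Sp}_4(2)\cong S_6$ is not simple and so cannot be the socle $G_0$; your numerical example with $n=45$ is therefore irrelevant, though harmless. Overall your strategy is correct and arguably more illuminating than the paper's citation, at the cost of having to carry out a short explicit construction that the paper outsources.
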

	\begin{proof}
		In \cite[Section~$7$]{MasS}, it is proved that $G$ admits a base of cardinlality $5$, so that $b(G) \leq 5 \leq (\log n)/2+6$.
	\end{proof}
	\begin{proposition}
		Let $G$ and $H$ as in case~$\mathrm{\ref{HiHo3}}$,  and let $n=[G_0:H]$. Then, $G$ admits a base of cardinality $10$. In particular, $b(G) \leq (\log n)/2+6$.
	\end{proposition}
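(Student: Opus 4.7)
The plan is to split the argument in two: verify the numerical inequality $10 \leq (\log n)/2 + 6$, which reduces to $n \geq 256$, and then exhibit a base of cardinality $10$.

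For the numerical part, the given structural description of $H$ yields $|H| = q^{12}(q-1)^3(q^2-1)/d$ with $d = \gcd(2, q-1)$. Combined with the standard order formula for $\mathrm{P}\Omega_8^+(q)$, this expresses $n$ as a polynomial in $q$ of degree $11$. Direct evaluation at $q = 2$ gives $n = 14175$, so $\log n > 13$ and hence $(\log n)/2 + 6 > 12 \geq 10$; for $q \geq 3$ the bound is even stronger. Thus the entire statement follows once we establish $b(G) \leq 10$.

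For the base size, I identify $H \cap G_0$ as the non-maximal parabolic subgroup of $G_0$ attached to the central node $\alpha_2$ of the $D_4$ Dynkin diagram, whose Levi factor is of type $A_1$ (matching the factor $\mathrm{GL}_2(q)$ appearing in the description of $H$). This parabolic is contained in exactly three maximal parabolics of $G_0$, namely the stabilizers of a singular $1$-space and two singular $4$-spaces lying in the two distinct families, arranged in a mutually incident configuration; the triality graph automorphism inside $G$ cyclically permutes these three maximal parabolics. Therefore, $H$ is the setwise stabilizer in $G$ of the corresponding unordered triple of subspaces of the natural module $V = \mathbb{F}_q^8$, and $G/H$ may be interpreted geometrically as the set of such triples modulo triality. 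I would then construct ten such triples whose pointwise $G$-stabilizer is trivial: working with a standard hyperbolic basis, one selects sufficiently generic triples, and controls the residual stabilizer after a bounded number of them via the base-size bounds for the three constituent subspace actions (from Theorem~\ref{basesizeHLM} and \cite{HLM}), with a few additional triples added to kill the triality-induced symmetry.

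The main obstacle is that the triality automorphism does not act linearly on $V$, so a direct coordinate computation of the pointwise stabilizer is delicate. To circumvent this, I would follow the template of case~\ref{HiHo2} as executed in \cite{MasS}: verify $b(G) \leq 10$ computationally via \cite{GAP} for small values of $q$, and treat the remaining cases by a probabilistic argument using fixed-point-ratio estimates from \cite{BLS09}, establishing the inequality $\sum_{1 \neq x \in G} \mathrm{fpr}(x)^{10} < 1$, which guarantees that $10$ uniformly random cosets form a base with positive probability. The cutoff between the computational and asymptotic regimes can be tuned so that the fixed-point-ratio bounds become strong enough that the $10$th power suffices.
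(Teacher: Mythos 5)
Your geometric identification of $\Omega$ with the set of incident triples $\{p,\pi_1,\pi_2\}$ permuted cyclically by triality is exactly the paper's starting point, and your numerical reduction ($n=14175$ at $q=2$, increasing in $q$, so $10\le(\log n)/2+6$) matches the paper's closing computation. The problem is that the heart of the statement --- actually exhibiting a base of cardinality $10$ --- is never carried out. You offer two routes and execute neither. The explicit route is left at the level of ``select sufficiently generic triples and control the residual stabilizer,'' with no construction and no argument for why ten triples suffice; the paper does this concretely, and the two nontrivial points (that two triples sharing a common solid already force the stabilizer to avoid trialities, and that the nine chosen $1$-spaces pin $G_0$ down to diagonal matrices and then to field automorphisms, killed by a tenth triple involving a primitive element $\alpha$) are precisely the content you would need to supply.

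The fallback probabilistic route has a more structural defect. This action is a \emph{subspace} action: $H\cap G_0$ is a (non-maximal) parabolic contained in three subspace stabilizers, which is exactly why it appears in the paper's list of residual standard actions rather than being dispatched by Theorem~\ref{thrm:6.4}. The fixed-point-ratio estimates of \cite{BLS09} are proved for non-standard actions and give bounds of the shape $\mathrm{fpr}(x)<|x^G|^{-\epsilon}$ with $\epsilon$ bounded away from $0$; for parabolic-type actions such bounds fail (long root elements have fixed point ratio roughly $q^{-O(1)}$ independent of $|x^G|$), so the inequality $\sum_{1\ne x\in G}\mathrm{fpr}(x)^{10}<1$ cannot be quoted and would require a separate, genuinely quantitative analysis for all $q$. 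Asserting that ``the cutoff can be tuned'' does not substitute for that analysis. As it stands the proposal establishes the easy half of the proposition and leaves the claimed base of size $10$ unproven.
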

	\begin{proof}
				There is a geometric description of the domain $\Omega$ of $G$. We briefly recall a few basic facts about trialities, see~\cite[Chapter~8]{PolarSpaces}. Let $V=\mathbb{F}_q^8$ be the orthogonal space associated to $G$. The $4$-dimensional totally singular subspaces fall into two families (usually called the greeks and the latins), where two subspaces $W_1,W_2$ are in the same class if $\dim(W_1)-\dim(W_1\cap W_2)$ is even. Now, $\Omega$ can be identified with the collection of all triples $\{p,\pi_1,\pi_2\}$, where $p$ is a point (that is, a $1$-dimensional totally singular subspace) and $\pi_1$ and $\pi_2$ are solids containing $p$ and of different type. The triality is an automorphism of $G_0$ induced by the maps of the polar space which cyclically permutes $p, \pi_1,\pi_2$.\\
				Without loss of generality we may suppose that the hyperbolic quadric is $$Q=X_1X_2+X_3X_4+X_5X_6+X_7X_8.$$
			Consider the following points of $\Omega$.
			\begin{align*}
				\omega_1 &= \{\langle e_1 \rangle, \langle e_1,e_3,e_5,e_7 \rangle, \langle e_1,e_3,e_5,e_8 \rangle \},
				\omega_2 = \{\langle e_3 \rangle, \langle e_1,e_3,e_5,e_7 \rangle, \langle e_1,e_3,e_5,e_8 \rangle \},\\
				\omega_3 &= \{\langle e_5 \rangle, \langle e_1,e_3,e_5,e_7 \rangle, \langle e_1,e_3,e_5,e_8 \rangle \},
				\omega_4 = \{\langle e_2 \rangle, \langle e_2,e_3,e_5,e_7 \rangle, \langle e_2,e_3,e_5,e_8 \rangle \},\\
				\omega_5 &= \{\langle e_4 \rangle, \langle e_1,e_4,e_5,e_7 \rangle, \langle e_1,e_4,e_5,e_8 \rangle \},
				\omega_6 = \{ \langle e_6 \rangle, \langle e_1,e_3,e_6,e_7\rangle, \langle e_1,e_3,e_6,e_8 \rangle\},\\
				\omega_7 &= \{ \langle e_1+e_3+e_5 \rangle, \langle e_1,e_3,e_5,e_7\rangle, \langle e_1,e_3,e_5,e_8 \rangle \}, \\
				\omega_8 &= \{ \langle e_2+e_3+e_7 \rangle, \langle e_2,e_3,e_5,e_7 \rangle, \langle e_2,e_3,e_6,e_7 \rangle\}, \\
				\omega_9 &= \{ \langle e_2+e_3+e_8 \rangle, \langle e_2,e_3,e_5,e_8\rangle, \langle e_2,e_3,e_6,e_8 \rangle \}.
			\end{align*}
			Observe that, since $\langle e_1,e_3,e_5,e_7 \rangle$  is in common to $\omega_1$ and $\omega_2$, every element of $G_{\omega_1,\omega_2}$ fixes $\langle e_1,e_3,e_5,e_7 \rangle$. This implies that $G_{\omega_1,\omega_2}\le\mathrm{P}\Gamma\mathrm{GO}_8^+(q)$, that is, $G_{\omega_1,\omega_2}$ does not contain trialities. Observe now that if $g \in G_0$ fixes $\langle e_1 \rangle, \dots, \langle e_6 \rangle$, then $g$ is (the projective image of) a diagonal matrix  $$\mathrm{diag}(x_1,x_1^{-1},x_3,x_3^{-1},x_5,x_5^{-1},x_7,x_7^{-1}),$$ for some $x_1,x_3,x_5,x_7 \in \mathbb{F}_q \setminus \{0\}$. This can be seen by imposing that $g$ preserve the quadratic form $Q$. Thus, stabilizing also $\langle e_1+e_3+e_5 \rangle, \langle e_2+e_3+e_7 \rangle, \langle e_2+e_3+e_8 \rangle$, we get a base for the action of $G_0$ on the set of totally singular $1$-subspaces. This argument shows that $(G_0)_{\omega_1,\dots,\omega_9} = \mathrm{Aut}(\mathbb{F}_q)$. By stabilizing also $$\omega_{10} = \{ \langle \alpha e_1 \rangle, \langle \alpha e_1,e_3,e_5,e_5 \rangle, \langle \alpha e_1 ,e_3,e_5,e_8 \rangle\},$$ where $\alpha \in \mathbb{F}_q$ is a generator of the multiplicative group of the field, we get a basis for the action of $G$ on $\Omega$. This shows that $b(G) \leq 10$. However, in this case, we see by \cite{BGiu} that 
			\[
				n = \frac{1}{c}(q+1)^3(q^2+1)^2\left(\frac{q^6-1}{q^2-1}\right),
			\] 
			where $c=2$ if $q$ is odd, and $c=1$ otherwise. In particular, $n$ is an increasing function of $q$ and, for $q=2$, we have
			\[
				10 \leq \frac{1}{2}\log n + 6,
			\]
			and this concludes the proof.
	\end{proof}
	\begin{proposition}
		Let $G$ and $H$ be as in case~$\mathrm{\ref{HiHo4}}$, and let $n = [G_0:H]$. Then, $b(G) \leq (\log n)/2+6$.
	\end{proposition}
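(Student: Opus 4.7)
The plan is to reduce directly to Theorem~\ref{thrm:6.4}, thereby avoiding the explicit base construction used in the preceding proposition. The key point is that $H\cong G_2(q)$ is not a subspace subgroup of $G_0=\mathrm{P}\Omega_8^+(q)$ in the sense of Definition~\ref{defintionsubspacesubgrp}. Indeed, the copy of $G_2(q)$ under consideration arises through the triality graph automorphism: starting from the natural embedding $G_2(q)\le\Omega_7(q)$, which sits inside $G_0$ as the stabilizer of a nonsingular vector of the $8$-dimensional orthogonal module, triality carries $G_2(q)$ onto a subgroup of $G_0$ acting \emph{irreducibly} on the natural module $V$. Such a subgroup stabilizes no proper nonzero subspace of $V$, so it satisfies neither clause~\ref{definition6.2:1} of Definition~\ref{defintionsubspacesubgrp} (no invariant totally singular, non-degenerate, or nonsingular $1$-space exists) nor clause~\ref{definition6.2:2} (which applies only when $G_0$ is symplectic in characteristic~$2$). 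Hence the action of $G$ on $[G_0:H]$ is not a subspace action; since moreover $\mathrm{soc}(G)\neq\mathrm{Alt}(m)$, this action is non-standard in the sense of Definition~\ref{definition6.3}.

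Once non-standardness is in hand, Theorem~\ref{thrm:6.4} immediately gives $b(G)\le 7$. To finish, I need only verify that $(\log n)/2+6\ge 7$, equivalently $n\ge 4$. Consulting \cite{BGiu}, one has $n=[G_0:H]=q^6(q^4-1)^2/\gcd(4,q^4-1)$, a polynomial expression in $q$ of degree $14$; even for $q=2$ this gives $n=14400$, so the required inequality is trivial for every prime power $q$.

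The only substantive step in this plan is the identification of $H$ as a non-subspace subgroup, which is a standard fact about the triality embedding of $G_2(q)$ into $\mathrm{P}\Omega_8^+(q)$ and may be cited rather than reproved. In contrast to the preceding proposition, no case analysis on $q$, no explicit construction of a base, and no computer calculation are required.
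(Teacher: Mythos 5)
There is a genuine gap, and it sits exactly at the step you call ``a standard fact'': the claim that the triality image of $G_2(q)$ acts \emph{irreducibly} on the natural module $V=\mathbb{F}_q^8$ is false. The copy of $G_2(q)$ arising in case~\ref{HiHo4} is the triality-stable one (the centralizer of the triality automorphism); it equals the intersection of the three $G_0$-classes of $\Omega_7(q)$ that triality permutes, and one of those three classes is the reducible one, namely the stabilizer of a non-degenerate (for $p$ odd) or nonsingular (for $p=2$) $1$-space. Hence $H\cap G_0=G_2(q)$ \emph{does} stabilize a proper nonzero subspace of $V$ of one of the types listed in clause~\ref{definition6.2:1}; for $q$ odd the restriction of $V$ to $G_2(q)$ is the $7$-dimensional module plus a trivial summand. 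So your stated reason for non-standardness evaporates. Note also that even as logic, irreducibility of $H\cap G_0$ is not the criterion in Definition~\ref{defintionsubspacesubgrp}: the definition quantifies over the \emph{maximal overgroups} $M$ of $H\cap G_0$ in $G_0$ (here the three classes of $\Omega_7(q)$, of which one is a subspace stabilizer and two are irreducible spin embeddings), and you never engage with that quantifier. Most importantly, the paper explicitly places case~\ref{HiHo4} in its list of ``remaining subspace actions,'' i.e.\ it treats this action as \emph{standard}, which is precisely why it is not dispatched by Theorem~\ref{thrm:6.4}; your proposal directly contradicts the paper's classification without justifying why Theorem~\ref{thrm:6.4} (and the underlying literature it summarizes) legitimately covers this novelty maximal subgroup.

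By contrast, the paper's proof is a one-line citation: \cite[Lemma~8.3]{MasS} exhibits an explicit base of cardinality $4$, whence $b(G)\le 4\le(\log n)/2+6$ with no condition on $n$ at all. If you want to rescue a reduction to Theorem~\ref{thrm:6.4}, the argument would have to run through the two irreducible maximal copies of $\Omega_7(q)$ containing $G_2(q)$ (which violate the ``for each maximal subgroup'' condition of Definition~\ref{defintionsubspacesubgrp}), together with a verification that the results behind Theorem~\ref{thrm:6.4} really do handle this triality novelty --- neither of which your write-up supplies. Your computation of $n$ and the final numerical check are fine but do not repair the classification step.
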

	\begin{proof}
		In \cite[Lemma~$8.3$]{MasS} it is proved that $G$ admits a base of cardinality $4$, therefore $b(G) \leq 4 \leq (\log n)/2+6$.
	\end{proof}
	\subsection{Diagonal groups} \label{sec1:sub2}
	\begin{proposition}
		Let $G$ be a primitive group of diagonal type with socle $G_0^k$, for some non-abelian simple group $G_0$, of degree $n=|G_0|^{k-1}$. Then, $ b(G) \leq (\log n)/2+6$.
	\end{proposition}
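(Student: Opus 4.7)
The plan is to invoke the known upper bound on the base size of primitive diagonal type groups, due to Fawcett and subsequent work, and then verify the desired inequality against $(\log n)/2 + 6$ by a direct numerical comparison. Such results give that $b(G)$ is very small: concretely, $b(G) \leq 4$ when $k = 2$, and $b(G) \leq \lceil \log k / \log|G_0|\rceil + c$ for a small absolute constant $c$ when $k \geq 3$, uniformly over all admissible $G$ with socle $G_0^k$, including those containing outer automorphisms and factor permutations from $\mathrm{Sym}(k)$.

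With this bound in hand, we compare with
\[
\frac{1}{2}\log n + 6 = \frac{(k-1)\log|G_0|}{2} + 6.
\]
Since $G_0$ is non-abelian simple, $|G_0| \geq 60$, so the right-hand side is bounded below by $(k-1)(\log 60)/2 + 6$. For $k = 2$ this already exceeds $8.9$, well above the $4$ coming from the quoted bound; for $k \geq 3$ the right-hand side exceeds $11$, while the upper bound on $b(G)$ grows only logarithmically in $k$ and is easily absorbed.

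I expect the main obstacle to lie in the extreme case $k = 2$ with $|G_0|$ minimal, for instance $G_0 = A_5$, since this is where $(\log n)/2 + 6$ attains its smallest value. Even there, the bound $b(G) \leq 4 \leq 8.9 \leq (\log n)/2 + 6$ settles the matter. No case-by-case analysis beyond a direct numerical verification, combined with the quoted upper bound for $b(G)$, is anticipated.
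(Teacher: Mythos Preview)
Your proposal is correct and follows essentially the same approach as the paper: both invoke Fawcett's bounds (giving $b(G)\le 4$ for $k=2$ and $b(G)=\lceil\log k/\log|G_0|\rceil+a_G$ with $a_G\in\{1,2\}$ for $k\ge 3$ when $P_G\ge A_k$, and $b(G)=2$ otherwise), then compare against $(k-1)\log|G_0|/2+6$ using $|G_0|\ge 60$. The paper carries out the $k\ge 3$ verification slightly more explicitly, reducing it to the inequality $2\log k-(k-1)(\log 60)^2\le 6\log 60$ and checking monotonicity, but your sketch (``logarithmic versus linear growth'') captures the same idea.
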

	\begin{proof}
		Observe that $G$ induces an action by conjugation on the $k$ copies of $G_0$. We denote this group with $P_G$, using the same notation of \cite{Faw}. All the results on the base size of a diagonal group that we will use during this proof are from \cite{Faw}.\\
		It is proved that if $P_G$ does not contain $A_k$, then $b(G) = 2 < (\log n)/2+6$, so we may suppose that $A_k \leq P_G$. If $k=2$, then $b(G) \in \{3,4\}$ and thus $b(G) \leq (\log n)/2+6$.\\		
		We can suppose that $A_k \leq P_G$ and $k > 2$. Then, 
		\[
		b(G) = \left\lceil \frac{\log k}{\log |G_0|} \right\rceil + a_G,
		\]
		where $a_G \in \{1,2\}$. Since $\log n \geq (k-1)\log |G_0|$, to prove that $b(G) \leq (\log n)/2+6$ it suffices to show that 
		\begin{equation}
			\label{eq2}
			\left\lceil \frac{\log k}{\log |G_0|} \right\rceil + a_G  \leq \frac{1}{2}(k-1)\log |G_0|+6.
		\end{equation}
		For the left side of (\ref{eq2}), we have
		\[
		\left\lceil \frac{\log k}{\log |G_0|} \right\rceil + a_G  \leq \frac{\log k}{\log 60} + 3
		\]
		while for the right side 
		\[
		\frac{1}{2}(k-1)\log |G_0|+6 \geq \frac{1}{2}(k-1)\log 60+6.
		\]
		So to prove (\ref{eq2}) it is sufficient to prove that
		\[
		\frac{\log k}{\log 60} + 3 \leq\frac{1}{2}(k-1)\log 60+6,
		\]
		which is equivalent to
		\[
		2\log k -(k-1)(\log 60)^2 \leq 6 \log 60.
		\]
		Now the left side of this inequality is a decreasing function of $k$, and for $k=3$ its value is negative, and this concludes the proof.
	\end{proof}
	\subsection{Affine groups}\label{sec1:sub3}
	\begin{proposition}
		Let $G\leq \mathrm{AGL}_d(q)$, with $q\geq 4$ in its natural action of degree $n=q^d$. Then, $b(G) \leq (\log n)/2+6$.
	\end{proposition}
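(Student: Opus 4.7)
The plan is to exploit two elementary facts: subgroup monotonicity of base size on a fixed set, and the already-stated identity $b(\mathrm{AGL}_d(q))=d+1$ from the footnote. Since every $G\le\mathrm{AGL}_d(q)$ acts on the same set $\mathbb{F}_q^d$, any base $B$ for $\mathrm{AGL}_d(q)$ satisfies $G_{(B)}\le \mathrm{AGL}_d(q)_{(B)}=1$, so $B$ is also a base for $G$. Hence
\[
b(G)\le b(\mathrm{AGL}_d(q))=d+1.
\]

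After that it is purely a numerical check. We need
\[
d+1 \le \tfrac{1}{2}\log n + 6 = \tfrac{d\log q}{2}+6,
\]
which rearranges to $d\bigl(1-\tfrac{\log q}{2}\bigr)\le 5$. Under the hypothesis $q\ge 4$ we have $\log q\ge 2$, so the left side is non-positive, and the inequality holds trivially for every $d\ge 1$.

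There is no real obstacle here; the assumption $q\ge 4$ is exactly what is needed to make $(\log q)/2\ge 1$ dominate the linear growth of the base size bound in $d$. The cases $q=2,3$ fall outside the scope of this proposition precisely because there $(\log q)/2<1$, so $d+1$ eventually exceeds $(d\log q)/2+6$; those cases are deferred to the exceptions appearing in Theorem~\ref{mainthm:2} (affine groups over $\mathbb{F}_2$ or $\mathbb{F}_3$) and handled separately in Subsection~\ref{sec2:sub1}. Thus the proof of the proposition will amount to the two lines above.
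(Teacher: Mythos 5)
Your proof is correct and follows essentially the same route as the paper: reduce to $G=\mathrm{AGL}_d(q)$ by subgroup monotonicity, use $b(\mathrm{AGL}_d(q))=d+1$ via the point-stabilizer/basis argument, and then check $d+1\le \frac{d\log q}{2}+6$ using $\log q\ge 2$. No issues.
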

	\begin{proof}
		We may suppose $G = \mathrm{AGL}_d(q)$. The point stabilizer of the zero vector in $\mathrm{AGL}_d(q)$ is $\mathrm{GL}_d(q)$ acting on the non-zero vectors of $\mathbb{F}_q^d$, and a base for $\mathrm{GL}_d(q)$ is in particular a basis for the vector space. In conclusion, $b(G) \leq b(\mathrm{GL}_d(q)) +1 = d+1$. Now $(\log n)/2+6 \geq (\log 4^d)/2+6 \geq d+6 \geq d+1$.
	\end{proof}
	\subsection{Product action}\label{sec1:sub4}
	\begin{proposition}
		Let $G \leq H \wr \mathrm{Sym}(k)$ with product action on $\Omega = \Gamma^k$, where $H \leq \mathrm{Sym}(\Gamma)$, with $|\Gamma|=n$, is a primitive group  of almost simple type or diagonal type, and $k>1$. Suppose moreover that $G$ is not large-base. Then,$b(G) \leq (\log n^k)/2 + 6$.
	\end{proposition}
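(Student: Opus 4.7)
The approach is to combine the base-size estimates for $H$ obtained in Subsections~\ref{sec1:sub1} and~\ref{sec1:sub2} with a standard product-action reduction. Since $G \le H \wr \mathrm{Sym}(k)$ is not large-base, the component $H$ cannot itself be large-base; hence, unless $H$ is one of the exceptional almost-simple actions listed in Table~\ref{tab:md}, Theorem~\ref{mainthm:2} applied to $H$ yields
\[
    b(H) \leq \tfrac{1}{2}\log n + 6.
\]
Moreover, since base sizes are monotone under inclusion of groups, it suffices to bound $b(H \wr \mathrm{Sym}(k))$.

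For the reduction itself, let $R_m(H)$ denote the number of regular $H$-orbits on $\Gamma^m$. I would first establish the elementary fact that $b(H \wr \mathrm{Sym}(k)) \leq m$ whenever $R_m(H) \geq k$: choosing representatives $\omega_1, \dots, \omega_k \in \Gamma^m$ of $k$ distinct regular orbits and arranging them as the columns of an $m \times k$ matrix over $\Gamma$ produces $m$ tuples in $\Gamma^k$ whose pointwise stabilizer in $H \wr \mathrm{Sym}(k)$ is trivial (each column kills the base group, and the orbits being distinct forbids any nontrivial element of $\mathrm{Sym}(k)$). Regular orbits propagate to higher levels cleanly: a regular tuple in $\Gamma^m$ has trivial $H$-stabilizer, so its $n$ one-coordinate extensions to $\Gamma^{m+1}$ lie in distinct $H$-orbits, giving $R_{m+1}(H) \geq n \cdot R_m(H)$. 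Iterating this from $m = b(H)$ yields
\[
    b(G) \leq b(H) + \left\lceil \frac{\log k}{\log n} \right\rceil.
\]

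Combining the two bounds gives $b(G) \leq \tfrac{1}{2}\log n + 7 + \tfrac{\log k}{\log n}$, which compared with the target $\tfrac{1}{2}\log(n^k) + 6 = \tfrac{k}{2}\log n + 6$ reduces to
\[
    \frac{k-1}{2}\log n \geq 1 + \frac{\log k}{\log n},
\]
which is valid whenever $n$ and $k$ are not both very small. The finitely many small pairs $(n,k)$ for which this fails are handled using the actual (considerably smaller) value of $b(H)$ for primitive groups of low degree, as recorded in the earlier subsections.

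The main remaining obstacle is the treatment of the exceptional subspace actions of Table~\ref{tab:md}, for which $b(H)$ may exceed $\tfrac{1}{2}\log n + 6$. In each such case, Theorem~\ref{basesizeHLM} together with Lemma~\ref{lemmaQuotCyc} still provides $b(H) \leq d/k_0 + O(1)$, where $d$ and $k_0$ are the dimension and subspace-dimension parameters of the classical action, while $\log n^k = k \log n$ grows linearly in both $k$ and $d$. This extra slack, combined with a case-by-case examination in the spirit of Subsection~\ref{sec1:sub1}, should close the remaining cases; this verification is the longest but most routine part of the argument.
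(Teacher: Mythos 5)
Your reduction lemma is correct and is essentially the same wreath-product bound the paper imports from the sporadic-groups base-size literature, namely $b(G)\le b(H,\Gamma)+\bigl\lceil \lceil\log k\rceil/\lfloor\log n\rfloor\bigr\rceil$; the regular-orbit argument you give for it is the standard proof. The combination step is also fine in spirit (your inequality $\tfrac{k-1}{2}\log n\ge 1+\tfrac{\log k}{\log n}$ does fail for a few small pairs such as $k=2$, $n\in\{5,6\}$, but these are genuinely finitely many and easy to dispatch, as you note).

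The real problem is your last paragraph. For the component groups $H$ whose socle and domain appear in Table~\ref{tab:md}, you cannot invoke Theorem~\ref{mainthm:2}, and what you offer instead ("$b(H)\le d/k_0+O(1)$ \dots should close the remaining cases") is not a proof: the whole reason these actions are excluded from Theorem~\ref{mainthm:2} is that $d/k_0$ can be as large as $\log_q n$ with $q\in\{2,3\}$, i.e.\ roughly $\log n$ or $(\log n)/\log 3$, so there is no "extra slack" to be had from the classical-group parameters alone, and a genuine case analysis would be needed. The paper sidesteps this entirely with one observation you are missing: since $k\ge 2$, the target $\tfrac12\log(n^k)+6=\tfrac{k}{2}\log n+6$ is at least $\log n+6$, so the \emph{weaker} uniform bound of Moscatiello and Roney-Dougal, $b(H)\le \log n+2$ for every non-large-base primitive almost simple $H$ other than $M_{24}$ (and $b(H)\le\log n$ for diagonal $H$), already suffices. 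That bound has no subspace-action exceptions, so no Table~\ref{tab:md} analysis is required; only $H=M_{24}$ and the degrees $n\le 4$ need separate (trivial) treatment. As written, your argument has a gap precisely where the paper's argument is designed to have none; replacing your use of Theorem~\ref{mainthm:2} on $H$ by the $\log n+2$ bound repairs it and simultaneously removes the exceptional cases.
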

	\begin{proof}
		We may suppose that $G = H \wr \mathrm{Sym}(k)$. By \cite{BOW10}, we have that 
		\begin{equation}\label{basesizewrpr}
			b(G) \leq \left \lceil \frac{\lceil \log k \rceil}{\lfloor \log n \rfloor} \right \rceil + b(H,\Gamma).
		\end{equation}

		Now, if $H$ is almost simple, then by \cite[Theorem~$1$]{MRD}, we see that $b(H) \leq 2+\log n$, except for $H=M_{24}$ in its action of degree $24$. Moreover, if $H$ is diagonal, then from \cite[Proposition~$9$]{MRD}, we see that $b(H) \leq \log n \leq \log n + 2$.
		Now, if $n\leq 4$, then $H=\mathrm{Sym}(3)$, or $H=\mathrm{Alt}(4)$ or $H=\mathrm{Sym}(4)$. Thus, using (\ref{basesizewrpr}), a direct computation shows that $b(G) \leq (\log n^k)/2+6$. We can then assume $n > 4$. \\
		Firstly, suppose that $H \neq M_{24}$. Thus,
		\[
			b(G) \leq \frac{\log k +1 }{2}+1 + \log n + 2 \leq \frac{k+1}{2}+\log n + 3.
		\]
		Therefore, in order to establish the inequality $b(G) \leq (\log n)/2+6$, it suffices to show
		\[
			\frac{k+1}{2}+\log n + 3 \leq 6+\frac{1}{2}\log n^k.
		\]
		A routine computation shows that this latter inequality is equivalent to
		\[
			k-5 \leq (k-2)\log n.
		\]
		If $k=2$, then the inequality holds. If $k>2$, the right hand side is an increasing function of $n$, and since $n>4$, it is greater than $2(k-2)> k-5$.\\
		Suppose now that $H = M_{24}$. In this case, $b(H) = 7$, $n=24$, and $\lfloor \log n \rfloor = 4$. Thus, as before, $b(G) \leq (1+\log k)/4+8 \leq (k\log 24)/2+6$.
	\end{proof}
	\begin{proposition}
		Let $G$ be a primitive group of twisted wreath product type of degree $n$. Then, $b(G)\leq (\log n)/2 + 6$.
	\end{proposition}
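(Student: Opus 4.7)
The plan is to combine the O'Nan--Scott structure of a primitive group of twisted wreath product type with a known bound on its base size from the literature, and then compare this bound with the target quantity $(\log n)/2 + 6$.

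Recall that if $G$ is primitive of twisted wreath product type, then $\mathrm{soc}(G) = N = T^k$ for some non-abelian simple group $T$ and some integer $k \geq 6$, with $N$ acting regularly on $\Omega$. The point stabilizer $P$ is a complement to $N$ in $G$ and projects onto a transitive subgroup of $\mathrm{Sym}(k)$. In particular $n = |\Omega| = |T|^k$, so $\log n = k\log|T| \geq 6\log 60$, which gives the generous lower bound $(\log n)/2 + 6 \geq 3\log 60 + 6 > 23$.

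Next I would invoke the bound on the base size of twisted wreath product type primitive groups established by Fawcett. Her results give an upper bound of the shape $b(G) \leq \lceil \log k/\log |T|\rceil + c$, where $c$ is a small absolute constant. Since $|T|\geq 60$ and $k \geq 6$, any bound of this form is far smaller than $(k\log |T|)/2 + 6$: in fact, $\lceil \log k/\log|T|\rceil + c \leq \log k/\log 60 + c$, whereas the right-hand side grows linearly in $k$. A direct computation, analogous to the ones carried out in Subsection~\ref{sec1:sub2} for diagonal groups, then yields $b(G)\leq (\log n)/2 + 6$.

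The main obstacle is matching the form of Fawcett's bound to the statement we want, since the precise wording in the literature can vary with the type of complement $P$. If no off-the-shelf bound applies uniformly, the fallback plan is to argue directly: exploit the fact that $N$ is regular to pick $\omega_1 = 1$ and $\omega_2 = n$ with $G_{\omega_1,\omega_2} = P \cap P^n$, choose $n \in N$ so that this intersection is as small as possible, and add $O(1)$ further points from well-chosen $P$-orbits on $\Omega$ to kill what remains. Either way, the final comparison step is trivial because $|T|^k$ is enormous relative to any polylogarithmic expression in $k$, so the logarithmic target $(\log n)/2 + 6$ is reached with substantial slack.
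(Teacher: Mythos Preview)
Your approach is essentially correct but takes a different route from the paper. The paper's proof is a one-liner: by a result of Praeger, a primitive group $G$ of twisted wreath type embeds in a wreath product $L \wr P$ acting in product action, with $L$ primitive of diagonal type; the previous proposition (on product actions with almost simple or diagonal components) then applies directly, and there is no further computation.

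Your argument instead appeals to Fawcett's explicit base-size bounds for twisted wreath products and compares them to $(\log n)/2+6$ via the same arithmetic as in the diagonal case. This works, and in fact gives sharper control on $b(G)$ itself, but it requires invoking a separate result from the literature (Fawcett's twisted wreath paper, which is not the diagonal-type paper cited here) and repeating a calculation already done. The paper's reduction to the product-action case is more economical within the structure of this section, since it reuses work already in place; your approach is more self-contained and would survive even if the product-action proposition were weakened or removed. Your fallback plan of arguing directly from the regular socle is plausible in spirit but too vague as written to count as a proof.
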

	\begin{proof}
		From \cite{Pre}, it follows that $G \leq L \wr P$ with the product action and $L$ primitive of diagonal type, and so the result follows from the previous proposition. 
	\end{proof}

	\section{Proof of Theorem~\ref{mainthm:1}}\label{sec2}
		 In this section, we prove Theorem~\ref{mainthm:1}. 
		 Let $G$ be a primitive group of degree $n$, which is not large-base. Suppose moreover that $G$ is neither an almost simple group with socle $G_0$ acting on $\Omega$, with $(G_0,\Omega)$ as in Table \ref{tab:md}, nor a subgroup of $\mathrm{AGL}_d(q)$, with $q=2$ or $q=3$. Then, by Theorem~\ref{mainthm:2}, $$\mu(G)b(G) \leq \frac{1}{2}n\log n + 6 n.$$ Observe that $(n\log n)/2 + 6 n \leq n\log n$ for $n \geq 2^{12}$, meaning that $\mu(G)b(G) \leq n\log n$ for all the primitive groups $G$ of degree $n \geq 4096$. If $G$ is a primitive group of degree $n \leq 4095$, we use the computer system $\mathrm{GAP}$ to see that $b(G)\mu(G) \leq n\log n$, except for the Mathieu group $M_24$ of degree $24$. \\
		 In light of this, it remains to prove Theorem~\ref{mainthm:1} only for those groups which are among the exceptions listed in Theorem~\ref{mainthm:2}. For such a group $G$ of degree $n$, we directly establish the inequality $\mu(G)b(G) \leq n \log n$, depending on the specific group. This requires an explicit analysis for affine groups over the field with $2$ or $3$ elements, for large-base groups, and for almost simple groups with socle and domain as detailed in Table~\ref{tab:md}.\\	
 
	\subsection{Affine groups}\label{sec2:sub1}
	\begin{proposition}
		Let $G \leq \mathrm{AGL}_d(q)$ with $q=2$ or $q=3$ in its natural action on $V = \mathbb{F}_q^d$, and let $n=q^d$. Then, $\mu(G) b(G) \leq n\log n$.
	\end{proposition}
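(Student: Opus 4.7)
The plan is to combine the universal bound $b(G)\leq b(\mathrm{AGL}_d(q))=d+1$, coming from the introductory footnote, with the trivial estimate $\mu(G)\leq n$, sharpening the latter only in one extremal case. The argument splits according to $q$.

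For $q=3$, the elementary inequality $d+1\leq d\log 3$ holds for every $d\geq 2$, so
\[
\mu(G)b(G)\leq n(d+1)\leq n\cdot d\log 3=n\log n.
\]
The remaining case $d=1$ is immediate, since $G\leq\mathrm{AGL}_1(3)=\mathrm{Sym}(3)$ is one of a handful of small groups whose product $\mu(G)b(G)$ can be checked by hand.

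For $q=2$ the trivial estimate gives only $\mu(G)b(G)\leq 2^d(d+1)$, which just exceeds $n\log n=2^d\cdot d$, so a refinement is required. If $b(G)\leq d$ then $\mu(G)b(G)\leq 2^d\cdot d=n\log n$ and the bound is immediate. Otherwise $b(G)=d+1$; writing $G=V\rtimes G_0$ with $V=\mathbb{F}_2^d$ and $G_0\leq\mathrm{GL}_d(2)$, a standard argument (translating a minimum base so that it contains the origin) yields $b(G)=1+b(G_0,V\setminus\{0\})$, whence $b(G_0,V\setminus\{0\})=d$. In particular no set of $d-1$ non-zero vectors has trivial pointwise stabilizer in $G_0$; choosing linearly independent $v_1,\dots,v_{d-1}$, some non-identity $g\in G_0$ fixes each $v_i$ and hence fixes the hyperplane $H=\langle v_1,\dots,v_{d-1}\rangle$ pointwise. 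Over $\mathbb{F}_2$ any such $g$ has $g-I$ of rank one and is therefore a transvection, so $|\mathrm{Fix}_V(g)|=2^{d-1}$ and consequently $\mu(G)\leq 2^{d-1}$. This gives
\[
\mu(G)b(G)\leq 2^{d-1}(d+1)\leq 2^d\cdot d=n\log n,
\]
the final inequality being equivalent to $d\geq 1$.

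The only delicate point is the extremal case $b(G)=d+1$ when $q=2$: it is resolved cleanly by the reduction $b(G)=1+b(G_0,V\setminus\{0\})$, together with the elementary fact that over $\mathbb{F}_2$ every non-identity element of $\mathrm{GL}_d(2)$ fixing a hyperplane pointwise is a transvection. The rest of the argument uses only trivial counting and the inequality $d+1\leq 2d$.
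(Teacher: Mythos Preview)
Your proof is correct. For $q=3$ it coincides with the paper's argument. For $q=2$ the underlying idea is the same---relating the base size of the linear part to the maximal dimension of a fixed space---but the organisation differs. The paper introduces the parameter $t=\max_{h\neq 1}\dim\mathrm{fix}(h)$, proves simultaneously that $\mu(G)\leq 2^d-2^t$ and $b(G)\leq t+2$, and then checks the product inequality $(2^d-2^t)(t+2)\leq d\,2^d$ for every $0\leq t\leq d-1$. You instead split on $b(G)$: when $b(G)\leq d$ the trivial bound $\mu(G)\leq n$ already suffices, and only in the extremal case $b(G)=d+1$ do you extract a transvection (this is exactly the paper's case $t=d-1$). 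Your route is shorter and avoids the intermediate parameter, while the paper's formulation makes the trade-off between $\mu(G)$ and $b(G)$ visible across the whole range of $t$.
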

	\begin{proof}
	If $q=3$, $G \leq \mathrm{AGL}_d(3)$, $b(G) \leq d+1$ and $n=3^d$. Observe that, if $d\geq 2$, $b(G) \leq \log n$: indeed, we have that $d+1 \leq d \log 3$ for $d>1/(\log 3-1)\approx 1.7$. 
	If $d=1$, then $G =\mathrm{Alt}(3)$ or $G = \mathrm{Sym}(3)$ and a direct computation shows that $b(G)\mu(G) \leq 3 \log 3$.\\
	Suppose now that $q=2$, so that $G \leq \mathrm{AGL}_d(2)$. In particular, $G = V : H$, where $V =\mathbb{F}_2^d$ and $H$ is an irreducible subgroup of $\mathrm{GL}_d(2)$. Observe that, for any $h \in H\setminus \{1\}$, the set $$\mathrm{fix}(h) = \{v \in V \, : \, v^h = v\}$$ is a subspace of $V$. Thus,
	\[
		\mu(H) = 2^d-\max_{h \in H \setminus \{1\}} |\mathrm{fix}(h)| = 2^d-2^t,
	\]
	for some $0 \leq t < d$. Since $H \leq G$, we have $\mu(G) \leq \mu(H) = 2^d-2^t$. Take now $v_1,\dots,v_t$ to be a basis for the vector space $\mathrm{fix}(\tilde{h})$, where $\tilde{h}\in H$ is the element realizing the minimal degree of $H$ (that is, the element with the largest number of fixed points in $H$). Consider $H_{v_1,\dots,v_t}$. This fixes all the elements of $\mathrm{fix}(\tilde{h})$, since it is a vector space. Take now $v_{t+1}\notin \mathrm{fix}(h)$, and consider $h \in H_{v_1,\dots,v_{t},v_{t+1}}$. This fixes all the points of $\langle v_1,\dots,v_{t+1} \rangle$, and by the maximality of $|\mathrm{fix}(\tilde{h})|$, we have $h = 1$. In other words, the set $\{v_1,\dots,v_{t+1}\}$ is a base for $H$. Now, $b(G) \leq b(H) +1 \leq t+2$. Thus, $\mu(G) b(G) \leq (2^d-2^t)(t+2)$, and it is therefore sufficient to show that
	\[
		(2^d-2^t)(t+2) \leq d2^d = n\log n.
	\]
	If $t = d-1$, then the inequality reduces to $d+1\leq 2d$ which holds. If $t \leq d-2$, then $t+2 \leq d$ and $2^d-2^t \leq 2^d$, and so we are done.
	
	\end{proof}
	
	\subsection{Almost simple groups as in Table \ref{tab:md}}\label{sec2:sub2}
	Here we prove Theorem~\ref{mainthm:1} for almost simple group $G$ having socle $G_0$ acting on the domain $\Omega$, with $(G_0,\Omega)$ listed in Table~\ref{tab:md}. In some cases, it will suffice to focus solely on the base size. In fact, while proving Theorem~\ref{mainthm:1} for the remaining primitive groups, we will also demonstrate this result, which may be of independent interest.
	\begin{proposition}\label{prop:bleqlogn}
		Let $G$ be a primitive permutation group of almost simple type having socle $G_0$ acting on $\Omega$, where $(G_0,\Omega)$ are as in Table~\ref{tab:md2}, and let $n=|\Omega|$. Then, we have $b(G) \leq \log n$.
	\end{proposition}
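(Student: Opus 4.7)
The strategy is a row-by-row case analysis through Table~\ref{tab:md2}, which is precisely the subtable of Table~\ref{tab:md} consisting of those subspace actions for which the stronger inequality $b(G)\leq\log n$ actually holds. This proposition is isolated because, once $b(G)\leq \log n$ is known and $\mu(G)\leq n$ trivially, the bound $\mu(G)b(G)\leq n\log n$ of Theorem~\ref{mainthm:1} follows immediately, without needing any information about $\mu(G)$ at all.

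For each entry $(G_0,\Omega)$ of the table I would combine three ingredients already set up earlier in the paper. First, Lemma~\ref{lemmaQuotCyc} gives $b(G)\leq b(G_0)+s$, where $s\leq 3$ is the length of a subnormal series with cyclic quotients for $G/G_0$, which can be read off from the standard structure of $\mathrm{Out}(G_0)$ for each classical type (and which has already been computed explicitly row-by-row in the propositions of Subsection~\ref{sec1:sub1}). Second, the specialised lemmas on $b(G_0)$ that were used there, principally \cite[Lemmas 3, 5 and 7]{MRD}, give $b(G_0)\leq d+O(1)$ for all point, $S_1$ and $N_1$ actions appearing in Table~\ref{tab:md2}. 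Third, the formula for $n=|\Omega|$ taken from \cite[Table~4.1.2]{BGiu} yields in each row a lower bound of the form $\log n \geq (\log q)\cdot d -O(1)$.

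Combining these three ingredients, the inequality $b(G)\leq\log n$ reduces to a linear comparison in $d$ which holds for all $d$ at least some explicit threshold $d_0(q)$ depending on the row. For the rows with $q=3$ this is easy: since $\log n$ grows like $d\log 3 - O(1)$ while the base-size bound grows like $d+O(1)$, the gap $(\log 3-1)d-O(1)$ eventually dominates, and only a handful of base cases near the starting dimension in the table need checking.

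The main obstacle is the rows with $q=2$. There the right hand side of the comparison grows only at rate $1\cdot d$, matching the left hand side exactly, so the additive constants $s$ and $c$ become critical and an analytic argument can at best show the inequality for $d$ sufficiently large. In these rows I would optimise the constants by using the refined bounds already derived in Subsection~\ref{sec1:sub1} for the specific action (rather than the generic $b(G_0)\leq d/k+c$ of Theorem~\ref{basesizeHLM}), and for the finitely many remaining small dimensions I would verify the bound directly in $\mathrm{GAP}$, following exactly the pattern of the earlier propositions. Since, by construction, Table~\ref{tab:md2} is the subtable consisting of actions for which $b(G)\leq\log n$ does hold, the small-case verifications are guaranteed to succeed.
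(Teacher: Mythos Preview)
Your proposal is correct and matches the paper's approach: the proof is a row-by-row verification embedded in the case analyses of Subsection~\ref{sec2:sub2}, combining Lemma~\ref{lemmaQuotCyc}, the base-size bounds from \cite{MRD}, and the degree formulae from \cite{BGiu} to reduce each row to a linear inequality in $d$, with a few small dimensions handled by \textsc{GAP}. One small correction: Table~\ref{tab:md2} contains only a \emph{single} row with $q=2$, namely $\mathrm{P}\Omega_d^+(2)$ on $S_1$, and the paper disposes of it purely analytically (no \textsc{GAP}) via the sharp bound $b(G)\le d-1$ and the exact expansion $n=2^{d-1}+2^{d/2}-2^{d/2-1}-1$; the \textsc{GAP} checks you anticipate are in fact needed only for a handful of small $d$ in the $q=3$ rows.
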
	
	\begin{table}
		\[
		\begin{array}{lllll} \hline
			G_0 & \Omega & \mbox{Conditions} \\ \hline
			{\rm PSL}_{d}(3) & P_1  & d \geq 4  \\
			\hline  
			{\rm PSp}_d(3) & P_1 & d \geq 6 \\
			\hline 
			\mathrm{P}\Omega_d^+(2) & S_1 & d \geq 8 \\ \hline
			\mathrm{P}\Omega_d^\varepsilon(3) & S_1 & d \geq 8 		\\	
			& N_1 &  d \geq 8 		\\	
			\hline 
			\mathrm{P}\Omega_d(3) & S_1 & d \geq 7\\
			& N_{d-1} & d \geq 7\\
			\hline
		\end{array}
		\]
		\caption{Groups of Proposition~\ref{prop:bleqlogn} satisfying $b(G)\leq \log n$}
		\label{tab:md2}
	\end{table}
	\subsubsection{Linear groups}
	\begin{proposition}
		Let $G$ be an almost simple group having socle $G_0 = \mathrm{PSL}_d(q)$, with $d\geq 3$ and $q=2$ or $q=3$, acting on $P_1$, and let $n=|P_1|$. Then, $\mu(G)b(G) \leq n\log n$.
	\end{proposition}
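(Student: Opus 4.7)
The plan is to bound $\mu(G)$ and $b(G)$ separately and then verify the inequality $\mu(G)b(G) \leq n\log n$ with elementary estimates, treating the few tight small-dimensional cases by hand.

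\textbf{Step 1 (minimal degree via transvections).} Since $G_0 = \mathrm{PSL}_d(q)$ contains transvections and transvections are of the form $\tau(v) = v + \phi(v)a$ with $a$ in the hyperplane $H = \ker\phi$, a short eigenspace computation shows that the fixed points of $\tau$ on $P_1$ are precisely the $(q^{d-1}-1)/(q-1)$ projective points lying in $H$. Hence $\tau$ moves exactly $q^{d-1}$ points, and since $\tau \in G_0 \le G$ we obtain $\mu(G) \leq q^{d-1}$.

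\textbf{Step 2 (base size).} The pointwise stabilizer in $\mathrm{PGL}_d(q)$ of the $d$ independent projective points $\langle e_1 \rangle, \ldots, \langle e_d \rangle$ is the image of the diagonal subgroup, of order $(q-1)^{d-1}$. For $q=2$ this is trivial, so $b(\mathrm{PSL}_d(2)) \leq d$; since the graph automorphism of $\mathrm{PSL}_d(2)$ sends $P_1 \to P_{d-1}$ and therefore is not admitted by this action, $G = G_0$ and $b(G) \leq d$. For $q = 3$ one additional projective point $\langle e_1 + \cdots + e_d \rangle$ in general position kills the diagonal stabilizer, giving $b(\mathrm{PGL}_d(3)) \leq d+1$; since $q = 3$ is prime (no field automorphisms) and the graph automorphism does not preserve $P_1$, we have $G \leq \mathrm{PGL}_d(3)$ and $b(G) \leq d+1$.

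\textbf{Step 3 (the final inequality).} It remains to verify $q^{d-1}(d + \epsilon_q) \leq n \log n$, where $\epsilon_2 = 0$ and $\epsilon_3 = 1$. For $q=2$, $n = 2^d - 1$ and $\log n \geq d-1$, and the inequality $d \cdot 2^{d-1} \leq (2^d-1)(d-1)$ rearranges to $(d-1)/2^{d-1} \leq d-2$, which holds for every $d \geq 3$. For $q=3$, using $n \geq 3^{d-1}$ and $\log n \geq (d-1)\log 3$, the inequality reduces to $d+1 \leq (d-1)\log 3$, which is true for $d \geq 5$; the remaining cases $d=3, 4$ (and $d=3$ for $q=2$) are dispatched by plugging in the exact value of $n$ and the bounds on $\mu, b$ obtained above.

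\textbf{Main obstacle.} There is no deep obstruction here; the only care required is in Step 2, where one must notice that graph automorphisms do not act on $P_1$ so that $G \leq \mathrm{PGL}_d(q)$, which is what allows the tight bound $b(G) \leq d$ for $q=2$ and $b(G) \leq d+1$ for $q=3$. Without this observation, a generic bound of the form $b(G) \leq d + c$ with $c \geq 3$ would fail to give $\mu(G)b(G) \leq n\log n$ in the boundary case $d=3, q=2$, where $n\log n = 7\log 7 \approx 19.6$ is already small.
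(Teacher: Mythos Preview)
Your proof is correct and follows the same overall strategy as the paper (bound $\mu$ and $b$ separately, then verify the product inequality), but your execution is sharper and more self-contained.

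The key difference is your observation in Step~2 that the graph automorphism of $\mathrm{PSL}_d(q)$ does not preserve $P_1$, forcing $G \leq \mathrm{P\Gamma L}_d(q) = \mathrm{PGL}_d(q)$ for $q=2,3$. This yields $b(G)\leq d$ for $q=2$ and $b(G)\leq d+1$ for $q=3$, whereas the paper uses the looser bounds $b(G)\leq d+1$ and $b(G)\leq d+3$ respectively (the latter coming from a generic subnormal-series argument for $\mathrm{Out}(G_0)$). As a consequence, for $q=3$ the paper can only establish $b(G)\leq\log n$ analytically when $d\geq 8$ and falls back on a GAP computation for $d\in\{3,\dots,7\}$; your tighter bound lets the analytic argument go through for all $d\geq 5$, leaving only $d=3,4$ to check by hand. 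You also give the transvection argument for $\mu(G)\leq q^{d-1}$ directly rather than citing an external table, and you use this bound for $q=3$ as well (the paper only invokes $\mu$ explicitly for $q=2$, relying on $\mu\leq n$ together with $b\leq\log n$ for $q=3$). Overall your route is more elementary and avoids computer verification entirely.
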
 
	\begin{proof}
		Suppose first that $G_0 = \mathrm{PSL}_d(2)$. Then, $b(G) \leq b(G_0)+1 \leq d+1$. Moreover, from \cite[Table~$2$]{BG}, we see that $\mu(G) = 2^{d-1}$. Thus, $\mu(G) b(G) \leq 2^{d-1}(d+1)$, and $2^{d-1}(d+1) \leq n\log n$ if and only if
		\[
			\frac{d+1}{\log(2^d-1)}\leq \frac{2^d-1}{2^{d-1}}.
		\]
		To see this, just observe that the left hand side is less then $3/2$ while the right hand side is greater then $3/2$. \\
		Suppose now that $q=3$. Here, $b(G) \leq d+3$, while $n=(3^d-1)/2$. Now $\log n \geq \log(3^d-3^{d-1})-1 = \log(3^{d-1}(3-1))-1 = (d-1)\log 3 $. Observe now that if $d \geq 8$, then $b(G) \leq d+3 \leq (d-1)\log 3  \leq \log n$.  For $d=3,4,5,6,7$, we have verified that $b(G) \leq \log n$ using $\mathrm{GAP}$ \cite{GAP}.
	\end{proof}
	
	\subsubsection{Symplectic groups}
	\begin{proposition}
		Let $G$ be an almost simple group having socle $G_0 = \mathrm{PSp}_d(q)$, with $d\geq 4$ even and $q=2$ or $q=3$, acting on $S_1$, and let $n=|S_1|$. Then, $\mu(G)b(G) \leq n\log n$.
	\end{proposition}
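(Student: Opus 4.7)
The plan is to split on $q \in \{2,3\}$, after first observing that since the symplectic form is alternating, every 1-space of $V = \mathbb{F}_q^d$ is totally isotropic, so $S_1 = P_1$. Thus the statement concerns the action on projective points of $V$, matching the rows of Table~\ref{tab:md2}.

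For $q=3$ the proof is essentially immediate: the pair $(\mathrm{PSp}_d(3), P_1)$ with $d\geq 6$ appears in Table~\ref{tab:md2}, so Proposition~\ref{prop:bleqlogn} gives $b(G) \leq \log n$. Combining this with the trivial bound $\mu(G) \leq n$ yields $\mu(G)b(G) \leq n \log n$ at once.

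For $q=2$ we cannot invoke Proposition~\ref{prop:bleqlogn}, so we proceed by bounding $\mu(G)$ and $b(G)$ separately. First, $\mu(G) \leq 2^{d-1}$: a symplectic transvection $t_v$ fixes a 1-space $\langle x \rangle$ precisely when $x \in v^\perp$, so it fixes $2^{d-1}-1$ of the $n = 2^d-1$ points of $P_1$ and moves exactly $2^{d-1}$ of them. Second, the bound $b(G) \leq d+2$ is the same one invoked in the earlier proposition for $\mathrm{PSp}_d(q)$ on $S_1$; it follows from the standard bound $b(\mathrm{PGSp}_d(q)) \leq d$ of \cite[Lemma~3]{MRD} via Lemma~\ref{lemmaQuotCyc}, and in fact $\mathrm{Out}(\mathrm{Sp}_d(2))$ is trivial for $d \geq 6$, so $G = G_0$ and no outer contribution is needed.

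It then remains to verify the purely numerical inequality
\[
(d+2)\cdot 2^{d-1} \leq (2^d-1)\log(2^d-1) \qquad \text{for } d\geq 6.
\]
Using $\log(2^d-1) \geq d-1$ (valid for $d \geq 2$) and $2^d - 1 \geq 2^{d-1}$, this reduces to $(d+2) \leq 2(d-1)\cdot (2^d-1)/2^d$, which holds clearly for $d \geq 6$ since the right-hand side is nearly $2(d-1)$. I expect the only point where care is needed is the minimal degree bound for $\mathrm{Sp}_d(2)$; however, because an \emph{upper} bound $\mu(G) \leq 2^{d-1}$ is all that is required, the transvection calculation above settles it without appealing to the full classification of small-degree elements in \cite{BG}.
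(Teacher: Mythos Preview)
Your $q=2$ argument is essentially the paper's: both use $b(G)\le d+2$ and $\mu(G)\le 2^{d-1}$, and the numerical check is the same one the paper defers to the $\mathrm{PSL}_d(2)$ case. Your transvection count is a clean, self-contained substitute for the citation of \cite[Table~2]{BG}; that is a genuine (if minor) improvement.

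There are, however, two gaps. First, for $q=3$ you invoke Proposition~\ref{prop:bleqlogn}, but in the paper's logical structure that proposition is \emph{established} by the case-by-case work in the subsections of which the present proposition is one instance. In particular, the line $(\mathrm{PSp}_d(3),P_1)$ of Table~\ref{tab:md2} is justified precisely by the argument you are meant to supply here, so your appeal to it is circular. The paper instead argues directly: $b(G)\le d+3$, $n=(3^d-1)/2$, and $d+3\le (d-1)\log 3\le\log n$ for $d\ge 8$, with $d=4,6$ checked in \textsc{GAP}.

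Second, the proposition is stated for $d\ge 4$, but you only treat $d\ge 6$ in both cases. For $q=2$ this is easily repaired---your displayed inequality $(d+2)2^{d-1}\le (2^d-1)\log(2^d-1)$ in fact holds at $d=4$ as well ($48\le 15\log 15\approx 58.6$), so you need only say so. For $q=3$, however, Table~\ref{tab:md2} has $d\ge 6$, so even setting aside the circularity your argument says nothing about $d=4$; the paper handles $d\in\{4,6\}$ by a \textsc{GAP} verification that $b(G)\le\log n$.
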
 
	\begin{proof}
		If $q=2$, then $b(G) \leq d+2$ by \cite[Lemma~$3$]{MRD} and Lemma~\ref{lemmaQuotCyc}, while $\mu(G) = 2^{d-1}$ by \cite[Table~$2$]{BG}. Moreover, $n = 2^d-1$, so the inequality $\mu(G) b(G) \leq n \log n$ follows from the case $\mathrm{PSL}_d(2)$. \\
		Suppose now that $q=3$. As before, $b(G)\leq d+3$ and $n = (3^d-1)/2$, and with the same computation for the linear case, we get $b(G)\leq \log n$ for $d \geq 8$. For $d=4,6$, we have checked that $b(G) \leq \log n$ using GAP.
	\end{proof}
	\begin{proposition} \label{SpGO-}		
		Let $d>4$ even and let $G = \mathrm{Sp}_d(2)$ acting on the coset of $\mathrm{GO}_d^\varepsilon(2)$. Setting $n = [G : \mathrm{GO}_d^\varepsilon(2)]$, we have, $\mu(G) b(G) \leq  n \log n$.
	\end{proposition}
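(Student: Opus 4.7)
My plan is to proceed analogously to the previous proposition, which handled the case $q \geq 4$. Since $d > 4$ is even, the outer automorphism group of $\mathrm{Sp}_d(2)$ is trivial, so $G = G_0 = \mathrm{Sp}_d(2)$, and the bound $b(G_0) \leq d+1$ from \cite{HLM} (invoked in the previous proposition) yields $b(G) \leq d+1$ directly, without needing to pass through Lemma~\ref{lemmaQuotCyc}.

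For $\mu(G)$, I will exhibit an explicit non-identity element with many fixed points on $\Omega$. The action of $G$ on $\Omega$ is equivalent to its action on the set of quadratic forms on $V = \mathbb{F}_2^d$ of type $\varepsilon$ that polarize to the standard symplectic form. For any non-zero $v \in V$ the symplectic transvection $t_v : x \mapsto x + (x,v)v$ lies in $G \setminus \{1\}$, and a short calculation in characteristic $2$ shows that $t_v$ fixes a form $Q \in \Omega$ if and only if $Q(v) = 1$. To count such forms, I will double-count the set of pairs $(v, Q)$ with $v \neq 0$ and $Q(v) = 1$: using the transitivity of $\mathrm{Sp}_d(2)$ on non-zero vectors, together with the standard formula giving $2^{d/2-1}(2^{d/2} - \varepsilon)$ non-singular vectors in a form of type $\varepsilon$ (equivalently $2^{d-1} + \varepsilon \cdot 2^{d/2-1}$ singular ones), one obtains $|\mathrm{Fix}(t_v)| = 2^{d-2}$, and therefore
\[
\mu(G) \leq n - 2^{d-2} = 2^{d-2} + \varepsilon \cdot 2^{d/2-1}.
\]

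To conclude, I combine both bounds. Since $n = 2^{d/2-1}(2^{d/2} + \varepsilon) \geq 2^{d-2}$ for $d \geq 4$, one has $\log n \geq d - 2$ for every $d \geq 6$, and a direct arithmetic comparison (splitting according to $\varepsilon = +$ and $\varepsilon = -$) shows that
\[
(n - 2^{d-2})(d+1) \leq n \log n
\]
for every $d \geq 6$ and both $\varepsilon \in \{+,-\}$, giving $\mu(G) b(G) \leq n \log n$ as required.

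The main obstacle I anticipate is the correct computation of $|\mathrm{Fix}(t_v)| = 2^{d-2}$ via double-counting, since it relies on the standard but easily-miscounted formulas for singular and non-singular vectors in non-degenerate orthogonal spaces of each type over $\mathbb{F}_2$; once the value of $\mu(G)$ above is established, the remaining arithmetic inequality is elementary.
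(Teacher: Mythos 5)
Your argument is correct, but it takes a genuinely different route from the paper. The paper's proof is a two-line deduction from black-box citations: it takes $b(G)\leq 2+\log n$ from \cite[Proposition~7]{MRD} and the fixed point ratio bound $\mu(G)\leq \tfrac{2}{3}n$ from \cite[Theorem~4]{BG}, after which $\tfrac{2}{3}n(2+\log n)\leq n\log n$ reduces to $n\geq 16$, which holds since $d\geq 6$. You instead bound $b(G)\leq d+1$ (via \cite{HLM}, as in the preceding proposition, using triviality of $\mathrm{Out}(\mathrm{Sp}_d(2))$) and compute the minimal degree by hand: your identification of $\Omega$ with the quadratic forms of type $\varepsilon$ polarizing to the symplectic form, the criterion that the transvection $t_v$ fixes $Q$ iff $Q(v)=1$ (since $Q(t_v(x))=Q(x)+(x,v)(Q(v)+1)$ over $\mathbb{F}_2$), and the double count giving $|\mathrm{Fix}(t_v)|=2^{d-2}$ are all correct, as is the closing arithmetic ($\mu(G)\leq n-2^{d-2}$, $\log n\geq d-2$, and the case check on $\varepsilon$). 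What your approach buys is an explicit, essentially sharp estimate of $\mu(G)$ — a transvection in fact realizes the minimal degree here — and independence from the fixed point ratio machinery of \cite{BG}; what it costs is a longer computation and reliance on the $b(G_0)\leq d+1$ bound of \cite{HLM} holding at $q=2$, whereas the paper's route avoids both issues entirely.
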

	\begin{proof}
		By \cite[Proposition~$7$]{MRD}, we have that $b(G) \leq 2+\log n$. Moreover, from \cite[Theorem~$4$]{BG}, we have that $\mu(G) \leq 2/3n$. Thus, $\mu(G) b(G) \leq (2/3)n(2+\log n)$, and $(2/3)n(2+\log n)\leq n \log n$ for $n\geq 16$. However, in this case,
		\[
			n = 2^{\frac{d}{2}-1}(2^{\frac{d}{2}}+\varepsilon) \geq 2^{3-1}(2^3+\varepsilon) > 16,
		\]
		since $d \geq 6$.
	\end{proof}
	\subsubsection{Orthogonal groups of even dimension}
	\begin{proposition}
		Let $G$ be an almost simple group having socle $G_0 = \mathrm{P}\Omega_d^\varepsilon(q)$, with $d\geq 8$ even and $q=2$ or $q=3$, acting on $S_1$, and let $n=|S_1|$. Then, $\mu(G)b(G) \leq n\log n$.
	\end{proposition}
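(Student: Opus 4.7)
The plan is to split the argument by $(q,\varepsilon)$. For the three cases $(q,\varepsilon)\in\{(2,+),(3,+),(3,-)\}$ the pair $(G_0,\Omega)$ appears in Table~\ref{tab:md2}, so Proposition~\ref{prop:bleqlogn} yields $b(G)\le \log n$; combining with the trivial bound $\mu(G)\le n$ gives $\mu(G)b(G)\le n\log n$ immediately. This reduces the problem to the single missing case $(q,\varepsilon)=(2,-)$.

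For $(q,\varepsilon)=(2,-)$, I would bound $b(G)$ and $\mu(G)$ separately. For the base size, \cite[Lemma~3]{MRD} gives $b(\mathrm{PGO}_d^-(2))\le d-1$, and since $\mathrm{Out}(\Omega_d^-(2))$ is cyclic of order at most $2$, Lemma~\ref{lemmaQuotCyc} yields $b(G)\le d$. For the minimum degree, I would read off the explicit value of $\mu(G)$ from \cite[Table~2]{BG}, exactly as was done in the preceding proposition for $\mathrm{PSp}_d(q)$ on $S_1$; the resulting estimate is of the order $\mu(G)=O(2^{d-1})$, which is enough to beat $n/d$ by a constant factor.

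Using $n=(2^{d/2}+1)(2^{d/2-1}-1)\ge 2^{d-2}$, so $\log n\ge d-2$, the required inequality $d\cdot\mu(G)\le n(d-2)$ reduces to $\mu(G)/n\le 1-2/d$, which is comfortably satisfied by the BG bound for all $d\ge 10$. The single residual value $d=8$ (where $n=119$ and the asymptotic slack is tight) is handled by a direct computation in \cite{GAP}, following the same pattern used elsewhere in the paper for borderline dimensions.

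The main obstacle I anticipate is precisely the case $(q,\varepsilon)=(2,-)$: Proposition~\ref{prop:bleqlogn} does not apply and the trivial bound $\mu(G)\le n$ is not quite enough for small $d$, so we must combine an explicit form of $\mu(G)$ from \cite{BG} with the base size bound, and verify the small initial values of $d$ separately.
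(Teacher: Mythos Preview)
Your proposal has a circularity problem in the first step. Proposition~\ref{prop:bleqlogn} is not an independent result that you may quote here: as the paper states just before it, that proposition is \emph{established} in the course of proving the individual propositions of Subsection~\ref{sec2:sub2}. In particular, the entries $\mathrm{P}\Omega_d^+(2)$ on $S_1$ and $\mathrm{P}\Omega_d^\varepsilon(3)$ on $S_1$ in Table~\ref{tab:md2} are precisely the cases handled by the present proposition. So invoking Proposition~\ref{prop:bleqlogn} to dispose of $(2,+),(3,+),(3,-)$ assumes what you are trying to prove. The paper instead proves $b(G)\le\log n$ directly in each of these three cases: for $(2,+)$ one checks $b(G)\le d-1$ and $n=(2^{d/2}-1)(2^{d/2-1}+1)\ge 2^{d-1}$; for $q=3$ one uses $b(G)\le d$ and $\log n\ge (d-1)\log 3$.

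For the remaining case $(q,\varepsilon)=(2,-)$ your outline is the right shape but too vague at the key step. You write that ``the resulting estimate is of the order $\mu(G)=O(2^{d-1})$'' from \cite[Table~2]{BG}; this is not a usable bound, and it is not clear that BG's table covers this action. The paper instead exhibits an explicit element: taking the quadratic form $Q=X_1^2+X_1X_2+X_2^2+\sum_{i\ge 3}X_iX_{i+1}$, the block matrix $g=\mathrm{diag}(A,I_{d-2})$ with $A=\left(\begin{smallmatrix}1&1\\1&0\end{smallmatrix}\right)$ lies in $\mathrm{P}\Omega_d^-(2)$ (it is a product of two reflections), fixes every singular $1$-space inside $\langle e_3,\dots,e_d\rangle$, and hence moves at most $n-\tilde n=3(2^{d-3}-2^{d/2-2})$ points. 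Combined with $b(G)\le d-1$ and $n\ge 2^{d-2}$, the inequality $\mu(G)b(G)\le n\log n$ follows for all $d\ge 8$ without any GAP check; your proposed computer verification at $d=8$ is unnecessary once the explicit $\mu$-bound is in hand.
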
 
	\begin{proof}
	Suppose first that $q=2$ and $\varepsilon = +$. In this case, we have that
	\[
		b(G) \leq \log n.
	\]
	Indeed, by Lemma~\ref{lemmaQuotCyc}, we get $b(G) \leq d-1$, while $n = (2^{d/2}-1)(2^{d/2-1}+1)$. Thus, $b(G) \leq \log n$ if and only if
	\[
		2^{d-1} \leq (2^{\frac{d}{2}}-1)(2^{\frac{d}{2}-1}+1) = 2^{d-1} + 2^{\frac{d}{2}}-2^{\frac{d}{2}-1}-1
	\]
	and this holds if and only if $2^{d/2}-2^{d/2-1}-1 \geq 0$ which is true for $d\geq 4$.\\
	Suppose now that $q=2$ and $\varepsilon = -$. In this case, it is easy to see that it is no longer true that $b(G) \leq \log n$, where
	\[
		n = (2^\frac{d}{2}+1)(2^{\frac{d}{2}-1}-1).
	\]
	So, we need to discuss also the minimal degree. We use the quadratic elliptic form given by
	\[
		Q = X_1^2+X_1X_2+X_2^2+\sum_{i=3}^{d-1}X_iX_{i+1}.
	\]
	Consider now the (image of) the matrix
	\[
		g = 
		\begin{bmatrix}
			A & 0 \\
			0 & I_{d-2}
		\end{bmatrix},
	\]
	where 
	\[
		A = \begin{bmatrix}
			1 & 1 \\
			1 & 0
		\end{bmatrix}.
	\]
	Observe that $g \in \mathrm{P}\Omega_d^-(2)$, since it is the product of the reflections trough $e_1$ and $e_2$ respectively. Observe now that $g$ fixes all the totally isotropic $1$-subspaces of $W=\langle e_3,\dots,e_d \rangle$. However, if we restrict $Q$ to this subspace, we recover a quadratic hyperbolic form. Thus, the number of totally isotropic $1$-dimensional subspaces of $W$ is
	\[
		\tilde{n} = (2^{\frac{d}{2}-1}-1)(2^{\frac{d}{2}-2}+1).
	\]
	In particular, since $g$ moves $n-\tilde{n}$ 
	points, we have that $$\mu(G) \leq \mu(\mathrm{P\Omega_d^-(2)}) \leq |\mathrm{supp}(g)| = n-\tilde{n} = 3(2^{d-3}-2^{\frac{d}{2}-2}).$$
	Moreover, we have that $b(G)\leq d-1$. In particular, $\mu(G) b(G) \leq 3(2^{d-3}-2^{d/2-2})(d-1)$. Therefore, it suffices to show that
	\begin{equation}\label{ineqOrt1}
		3(d-1)(2^{d-3}-2^{\frac{d}{2}-2}) \leq (2^\frac{d}{2}+1)(2^{\frac{d}{2}-1}-1) \log((2^\frac{d}{2}+1)(2^{\frac{d}{2}-1}-1)). 
	\end{equation}	
	Firstly, observe that $n \geq 2^{d-2}$. Indeed, 
	\[
		n=(2^\frac{d}{2}+1)(2^{\frac{d}{2}-1}-1) \geq 2^\frac{d}{2}(2^{\frac{d}{2}-1}-2^{\frac{d}{2}-2}) = 2^\frac{d}{2}(2^{\frac{d}{2}-2}(2-1)) = 2^{d-2}.
	\]	
	Thus, the inequality~\ref{ineqOrt1} reduces to
	\[
		3 \frac{d-1}{d-2}(2^{d-3}-2^{\frac{d}{2}-2}) \leq (2^\frac{d}{2}+1)(2^{\frac{d}{2}-1}-1).
	\]
	Now $3(d-1)/(d-2) \leq 4$, and hence
	\[
		3 \frac{d-1}{d-2}(2^{d-3}-2^{\frac{d}{2}-2}) \leq 2^{d-1}-2^{\frac{d}{2}}.
	\]
	Now it is trivial to see that $2^{d-1}-2^{d/2} \leq (2^{d/2}+1)(2^{d/2-1}-1)$, and so we are done.\\
	Suppose now that $q=3$. Since $\mathrm{Out}(\mathrm{P}\Omega_d^\varepsilon(3))$ has a subnormal series of length at most $2$ (depending on the parity of $d/2$), $b(G) \leq d$. We show that $b(G) \leq \log n$ in both cases. Suppose that $\varepsilon=+$. Thus, $n = (3^{d/2}-1)(3^{d/2-1}+1)/2$, and $d+1 \leq \log n$ if and only if 
	\[
		d+1 \leq \log ((3^\frac{d}{2}-1)(3^{\frac{d}{2}-1}+1)).
	\]
	But we have
	\[
	\log ((3^\frac{d}{2}-1)(3^{\frac{d}{2}-1}+1)) \geq \log(3^{\frac{d}{2}-1}(3^\frac{d}{2}-1)) \geq \log(3^{\frac{d}{2}-1}(3^{\frac{d}{2}}-3^{\frac{d}{2}-1}))=(d-1)\log 3+1,
	\]
 	So the claim follows from $d \leq (d-1)\log 3$, which holds for $d \geq 3$. The case $\varepsilon=-$ is similar.
	
	\end{proof}
	\begin{proposition}
		Let $G$ be an almost simple group having socle $G_0 = \mathrm{P}\Omega_d^\varepsilon(q)$, with $d\geq 8$ even and $q=2$ or $q=3$, acting on $N_1$, and let $n=|N_1|$. Then, $\mu(G)b(G) \leq n\log n$.
	\end{proposition}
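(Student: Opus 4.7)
The plan is to follow the case split used in the preceding proposition for the $S_1$ action, treating $q=3$ and $q=2$ separately.

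For $q=3$ I would aim at the stronger bound $b(G) \leq \log n$, which at once yields $\mu(G) b(G) \leq n \log n$. Using Lemma~\ref{lemmaQuotCyc} together with the bound on $b(G_0)$ available for the $N_1$ action (cf.\ \cite[Lemma~7]{MRD}), one obtains $b(G) \leq d$. From \cite[Table~4.1.2]{BGiu} one reads $n = q^{d/2-1}(q^{d/2} \mp 1)/2$, whence $n \geq 3^{d-2}$ and $\log n \geq (d-2)\log 3$. Since $(d-2)\log 3 \geq d$ for $d \geq 8$, the inequality $b(G) \leq \log n$ follows, and any borderline low-dimensional instances would be dispatched by a direct \textsf{GAP} computation exactly as in the $S_1$ proof.

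For $q=2$ the bound $b(G)\leq \log n$ is in general too tight since $n = 2^{d/2-1}(2^{d/2}\mp 1)$, and so one must also control the minimal degree. Proceeding as in the preceding proposition, I would fix a quadratic form of sign $\varepsilon$ on $V=\mathbb{F}_2^d$ so that the first two basis vectors span a nondegenerate $2$-subspace $U$, and consider the involution $g \in \mathrm{P}\Omega_d^\varepsilon(2)$ acting as the product of the two reflections supported on $U$ and as the identity on its orthogonal complement $W$ of dimension $d-2$. Every nonsingular $1$-space contained in $W$ is then fixed by $g$; since the restriction of the form to $W$ is nondegenerate of dimension $d-2$ and of a sign determined by $\varepsilon$ and the type of $U$, the fixed-point set of $g$ on $N_1$ has cardinality of the shape $2^{d/2-2}(2^{d/2-1}\mp 1)$. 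This yields an explicit bound of the form
\[
    \mu(G) \leq n - 2^{d/2-2}(2^{d/2-1}\mp 1),
\]
and combining with $b(G)\leq d$ from Lemma~\ref{lemmaQuotCyc} reduces $\mu(G) b(G) \leq n \log n$ to an elementary polynomial comparison, analogous in shape to the inequality~(\ref{ineqOrt1}) handled just above.

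The main obstacle is expected to be the tight subcase $q=2$, $\varepsilon = -$, where both the base-size and minimal-degree bounds are needed essentially optimally. As in the corresponding $S_1$ computation, the decisive estimates should be $n \geq 2^{d-2}$ together with the crude inequality $3(d-1)/(d-2) \leq 4$, from which the final bound follows after a short manipulation. A handful of small-dimensional exceptions may still need a direct computer verification, consistent with the pattern of the earlier cases.
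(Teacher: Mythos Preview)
Your proposal is correct and follows essentially the same approach as the paper: for $q=3$ you establish $b(G)\le\log n$ directly, and for $q=2$ you exhibit an explicit element of $G_0$ fixing a $(d-2)$-dimensional subspace pointwise in order to bound $\mu(G)$, then combine with $b(G)\le d$.

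One small correction is worth noting. Over $\mathbb{F}_2$ a nondegenerate $2$-space with two nonsingular basis vectors is necessarily of minus type (a hyperbolic line contains only one nonsingular point), and in $\mathrm{GO}_2^-(2)\cong S_3$ the product of two distinct reflections has order $3$, not $2$; so your $g$ is not an involution, though this is harmless for the argument. The paper uses exactly this order-$3$ element when $\varepsilon=-$, whereas for $\varepsilon=+$ it instead takes the commuting product $r_{e_1+e_2}\,r_{e_3+e_4}$, a genuine involution; your uniform choice of a minus-type $U$ works just as well in that case (indeed more cleanly, since $U^\perp$ is then genuinely nondegenerate of type $-\varepsilon$) and leads to the same final estimate.
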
 
	\begin{proof}
		Suppose that $q=2$. We handle firstly with the case $\varepsilon = -$. Observe that $n=2^{d/2-1}(2^{d/2}+1)$. We again use the quadratic elliptic form $Q = X_1^2+X_1X_2+X_2^2+\sum_{i=3}^{d-1}X_iX_{i+1}$. Take again the element of $G_0$ given by
			\[
		g = 
		\begin{bmatrix}
			A & 0 \\
			0 & I_{d-2}
		\end{bmatrix},
		\]
		where 
		\[
		A = \begin{bmatrix}
			1 & 1 \\
			1 & 0
		\end{bmatrix}.
		\]
		Thus, $g$ fixes all the $1$-singular non degenerate subspaces of $W=\langle e_3,\dots,e_d \rangle$. Observe moreover that $Q_{|W}$ is an hyperbolic quadratic form. Thus, $g$ fixes at least $\tilde{n} = 2^{d/2-2}(2^{d/2-1}-1)$ points. This shows that
		\[
			\mu(G) \leq \mu(G_0) \leq |\mathrm{supp}|(g) \leq n-\tilde{n} = 3.2^{d-3}+2^{\frac{d}{2}-1}.
		\]
		Since $b(G) \leq d$, the inequality $b(G)\mu(G) \leq n \log n$ holds if
		\[
			d( 3.2^{d-3}+2^{\frac{d}{2}-1}) \leq 2^{\frac{d}{2}-1}(2^{\frac{d}{2}}+1)\log(2^{\frac{d}{2}-1}(2^{\frac{d}{2}}+1)).
		\]
		We have that $\log n \geq \log(2^{d/2-1}2^{d/2})=d-1$. Then, the inequality $\mu(G)b(G) \leq n\log n$ holds if
		\[
			\frac{d}{d-1}( 3.2^{d-3}+2^{\frac{d}{2}-1}) \leq 2^{\frac{d}{2}-1}(2^{\frac{d}{2}}+1).
		\]
		Since $d\geq 8$, we have $d/(d-1) \leq 8/7$, and thus the inequality reduces to
		\[
			3.2^d+2^{\frac{d}{2}+2}\leq 7.2^{d-1}+7.2^{\frac{d}{2}-1},
		\]
		which is $2^{d-1}\geq 2^{d/2-1}$, which holds for all positive $d$.
		\\
		Suppose now that $(q,\varepsilon)=(2,+)$. We use the hyperbolic quadratic form 
		\[
			Q = \sum_{i=1}^{d-1} X_iX_{i+1}.
		\]
		Take $v_1 = e_1+e_2$ and $v_2 = e_3+e_4$. Observe that both $v_1$ and $v_2$ are non-singular, and hence we can define the reflections $r_{v_1}$ and $r_{v_2}$. Taking their product, we obtain the element $g$ of $\mathrm{P}\Omega_d^+(2)$ given by the image of the matrix
		\[
			\begin{bmatrix}
				A & 0 \\
				0 & I_{d-4},
			\end{bmatrix}
		\]
		where
		\[
			A = 
			\begin{bmatrix}
				0 & 1 & 0 & 0 \\
				1 & 0 & 0 & 0 \\
				0 & 0 & 0 & 1 \\
				0 & 0 & 1 & 0 
			\end{bmatrix}.
		\]
		Observe now that $g$ fixes all the non degenerate $1$-dimensional subspaces of $W=\langle v_1,v_2,e_5,\dots,e_d \rangle$. If now we restrict $Q$ to $W$, we obtain a quadratic form of elliptic type. Hence, $g$ fixes $2^{d/2-2}(2^{d/2-1}+1)$ points. This shows that
		\[
			\mu(G) \leq \mu(G_0) \leq |\mathrm{supp}(g)| = 2^{\frac{d}{2}-1}(2^{\frac{d}{2}}-1)-2^{\frac{d}{2}-2}(2^{\frac{d}{2}-1}+1) = 3(2^{d-3}-2^{\frac{d}{2}-2}).
		\]
		Now we have that $b(G) \leq b(G_0)+2 \leq d$, and hence to show that $b(G)\mu(G) \leq n \log n$ it is sufficient to show
		\[
			3d(2^{d-3}-2^{\frac{d}{2}-2}) \leq 2^{\frac{d}{2}-1}(2^{\frac{d}{2}}-1)\log(2^{\frac{d}{2}-1}(2^{\frac{d}{2}}-1)).
		\]
		Now observe that
		\[
			\log(2^{\frac{d}{2}-1}(2^{\frac{d}{2}}-1)) \geq \log(2^{\frac{d}{2}-1}(2^{\frac{d}{2}}-2^{\frac{d}{2}-1})) = d-2,
		\]
		hence it suffices to show that 
		\[
			3 \frac{d}{d-2}(2^{d-3}-2^{\frac{d}{2}-2}) \leq 2^{\frac{d}{2}-1}(2^{\frac{d}{2}}-1).
		\]
		Now since $d \geq 8$, $3d/(d-2)\leq 4$, so that the inequality reduces to 
		\[
			2^{d-1}-2^{\frac{d}{2}} \leq 2^{d-1}-2^{\frac{d}{2}-1},
		\]
		which is trivially true.
		\\
		Suppose now that $q=3$. Here the two cases for $\varepsilon$ are very similar, and we report only the case $\varepsilon = +$. We have that $m = 3^{d/2-1}(3^{d/2}-1)/2$, and $b(G) \leq d$. We show that $b(G) \leq \log n$. Indeed, this is equivalent to $d+2 \leq \log (3^{d/2-1}(3^{d/2}-1))$. But we have that 
		\[
			\log (3^{\frac{d}{2}-1}(3^{\frac{d}{2}}-1)) \geq \log (3^{\frac{d}{2}-1}(3^{\frac{d}{d}}-3^{\frac{d}{2}-1})) = \log(2.3^{d-2}) = (d-2)\log3 +1.
		\]
		Thus it suffices to show $d+1 \leq (d-2)\log 3+1$, which is true for $d\geq (2\log3+1)/(\log 3-1) \approx 8.83 >9$. For $d=8$, we checked the validity on the computer system GAP.
	\end{proof}
	\subsubsection{Orthogonal groups in odd dimension}
	\begin{proposition}
		Let $G$ be an almost simple group with socle $G_0 = \mathrm{P}\Omega_d(3)$, with $d\geq 7$ odd, acting on $S_1$, and let $n=|S_1|$. Then, $\mu(G) b(G) \leq n \log n$.
	\end{proposition}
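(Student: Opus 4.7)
The plan is to establish the stronger inequality $b(G)\leq \log n$. Once this is in hand, the claim follows immediately from $\mu(G)\leq n$, since then $\mu(G)b(G)\leq n\log n$ trivially.

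First I would bound the base size. By \cite[Lemma~3]{MRD} we have $b(\mathrm{PGO}_d(3))\leq d-1$ (as in the earlier proposition treating $\mathrm{P}\Omega_d(q)$ on $S_1$ for $q>3$), and since $G/\mathrm{PGO}_d(3)$ has a subnormal series with cyclic quotients of length at most two, Lemma~\ref{lemmaQuotCyc} gives
\[
b(G)\leq d+1.
\]
Next I would read off $n$ from \cite[Table~4.1.2]{BGiu}, obtaining
\[
n=\frac{3^{d-1}-1}{2}.
\]
Using $3^{d-1}-1\geq 2\cdot 3^{d-2}$, this yields $\log n\geq (d-2)\log 3$.

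The remaining step is to compare $d+1$ with $(d-2)\log 3$. Solving $(d-2)\log 3\geq d+1$ gives $d\geq (1+2\log 3)/(\log 3 - 1)\approx 8.83$, so for all odd $d\geq 9$ we get $b(G)\leq d+1\leq (d-2)\log 3\leq \log n$, and the proposition follows. The only remaining case is $d=7$, which I would handle by direct computation: there $n=364$, so $\log n=\log 364>8\geq b(G)$, and the same conclusion $b(G)\leq \log n$ holds. If this elementary check felt delicate, one could alternatively invoke a small \textsf{GAP} calculation to confirm $b(G)\leq \log n$ in this single boundary case, in the same style as the earlier propositions in Section~\ref{sec2:sub2}.

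There is no real obstacle here: the argument is essentially a verification that the universal bound $b(G)\leq d+1$ is already dominated by $\log n$ once $d$ is large enough, with a single low-dimensional case ($d=7$) requiring a direct numerical check. This matches the recurring template used throughout Section~\ref{sec2:sub2} for orthogonal groups over $\mathbb{F}_3$.
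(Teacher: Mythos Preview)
Your proof is correct and follows essentially the same route as the paper: bound $b(G)\leq d+1$, compute $n=(3^{d-1}-1)/2$, and verify $d+1\leq\log n$, which immediately gives $\mu(G)b(G)\leq n\log n$. The paper simply asserts that $d+1\leq\log n$ ``is trivial to see for $d\geq 7$'' without the case split; your explicit estimate and direct check at $d=7$ make this rigorous. One small slip: your numerical value $(1+2\log 3)/(\log 3-1)\approx 8.83$ is miscomputed---the correct value is about $7.13$---but this is harmless since you only invoke the bound for odd $d\geq 9$ and handle $d=7$ separately anyway.
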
 
	\begin{proof}
		In this case, we have that $b(G) \leq d+1$, and $n = (3^{d-1}-1)/2$, and it is trivial to see that $d+1 \leq \log n$ for $d \geq 7$.
	\end{proof}
	\begin{proposition}
		Let $G$ be an almost simple group with socle $G_0 = \mathrm{P}\Omega_d(3)$, with $d\geq 7$ odd, acting on $N_1^\epsilon$, and let $n=|N_1^\epsilon|$. Then, $\mu(G) b(G) \leq n \log n$.
	\end{proposition}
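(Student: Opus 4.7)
The plan is to establish the stronger inequality $b(G)\leq \log n$; combined with the trivial bound $\mu(G)\leq n$, this immediately yields $\mu(G)b(G)\leq n\log n$, avoiding any delicate analysis of the minimal degree.

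First I would bound the base size. Since $d$ is odd, orthogonal complementation is a $G$-equivariant bijection between $N_1^\epsilon$ and $N_{d-1}^\epsilon$, so the two actions are permutation isomorphic. Hence, exactly as in the analysis of the $k=d-1$ case in the preceding proposition, \cite[Lemma~7]{MRD} together with Lemma~\ref{lemmaQuotCyc} gives $b(G)\leq d$. The hypothesis $q>3$ appearing in that earlier argument was needed only to meet the stronger target $(\log n)/2+6$; the bound $b(G)\leq d$ itself is available also for $q=3$.

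Next I would lower-bound $n$ using \cite[Table~4.1.2]{BGiu}, which gives
\[
n = |N_1^\epsilon| = \frac{3^{(d-1)/2}\bigl(3^{(d-1)/2}\pm 1\bigr)}{2},
\]
where the sign is $+$ or $-$ according to $\epsilon$. Since $3^{(d-1)/2}-3^{(d-3)/2}=2\cdot 3^{(d-3)/2}$, we have $3^{(d-1)/2}\pm 1\geq 2\cdot 3^{(d-3)/2}$ in either case, so $n\geq 3^{d-2}$, i.e.\ $\log n\geq (d-2)\log 3$.

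Finally, it suffices to verify that $d\leq (d-2)\log 3$ for all $d\geq 7$. This is equivalent to $d\geq 2\log 3/(\log 3 -1)\approx 5.42$, which plainly holds. Therefore $b(G)\leq d\leq \log n$, and the conclusion follows. No step here presents a genuine obstacle; the main conceptual point is that although the $q=3$ restriction in Theorem~\ref{mainthm:2} pushes us outside the regime in which the sharp $(\log n)/2+6$ bound was proved, the cruder estimate $b(G)\leq d$ remains available and is already tight enough against the growth $n\geq 3^{d-2}$.
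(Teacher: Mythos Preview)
Your proof is correct and follows essentially the same approach as the paper: both establish the stronger inequality $b(G)\leq\log n$ by combining the bound $b(G)\leq d$ with a lower bound for $n$. The only cosmetic difference is that you extract $n\geq 3^{d-2}$ from the explicit formula in \cite[Table~4.1.2]{BGiu}, whereas the paper quotes $n\geq\tfrac{1}{4}\,3^{d-1}$ from \cite{MRD}; your bound is marginally sharper and yields the threshold $d\geq 2\log 3/(\log 3-1)\approx 5.42$ instead of the paper's $d\geq (\log 3+2)/(\log 3-1)\approx 6.12$, but since $d\geq 7$ either suffices.
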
 
	\begin{proof}
		Also in this case we have $b(G) \leq \log n$. Indeed, it is shown in \cite[Proposition~$3.7$]{MRD} that $b(G)\leq d$ and $n \geq (1/4)3^{d-1}$, so that $\log n \geq (d-1)\log 3 -2$, so it suffices to show that $(d-1)\log 3-2 \geq d$, and this holds for 
		\[
		d \geq \frac{\log 3 +2}{\log 3-1} \approx6.12.
		\]		
	\end{proof}

	\subsection{Large-base groups}\label{sec2:sub3}
	In this section, we write $S_{m,k}$ (or $A_{m,k}$) to denote the symmetric group $S_m$ (or alternating group $A_m$) in its action on the collection of $k$-subsets of $\{1,\dots,m\}$. In particular, since $S_{m,k} = S_{m,m-k}$, we may suppose that $1\leq k \leq m/2$. \\
	Let now  $G$ be a large-base group, that is $(A_{m,k})^r \trianglelefteq G \leq S_{m,k} \wr S_r$, and the wreath product is equipped with the product action of degree $n = {m \choose k}^r$. Since $(A_{m,k})^r \trianglelefteq G$, then $\mu(G) \leq \mu((A_{m,k})^r)$. We need two basic lemmas.
	\begin{lemma}\label{lemmaAltSubsets}
		Let $m>2$ and $1 \leq k \leq m/2$. Then,
		\[
			\mu(A_{m,k}) \leq 3 {m-2 \choose k-1}.
		\]
	\end{lemma}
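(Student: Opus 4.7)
The plan is to exhibit a specific non-identity element of $A_m$ whose support in the action on $k$-subsets has cardinality exactly $3\binom{m-2}{k-1}$, which will immediately yield the claimed upper bound on $\mu(A_{m,k})$. Since $m > 2$, the natural candidate is the $3$-cycle $g = (1,2,3) \in A_m$; this is the simplest non-identity even permutation available and, as we shall see, its support is easy to count.

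First I would fix the combinatorial description of the fixed $k$-subsets. A $k$-subset $S \subseteq \{1,\dots,m\}$ satisfies $S^g = S$ if and only if $S \cap \{1,2,3\}$ is invariant under the cyclic permutation $g$ restricted to $\{1,2,3\}$; since $g$ permutes $\{1,2,3\}$ transitively, this forces $|S \cap \{1,2,3\}| \in \{0,3\}$. Equivalently, $S$ is moved by $g$ precisely when $|S \cap \{1,2,3\}| \in \{1,2\}$.

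Next I would count the moved subsets by splitting on the value of $|S \cap \{1,2,3\}|$. The number of $k$-subsets meeting $\{1,2,3\}$ in exactly one element is $3\binom{m-3}{k-1}$, and the number meeting it in exactly two elements is $3\binom{m-3}{k-2}$ (with the convention $\binom{m-3}{-1}=0$ to cover $k=1$). Summing and applying Pascal's identity gives
\[
|\mathrm{supp}(g)| \;=\; 3\binom{m-3}{k-1} + 3\binom{m-3}{k-2} \;=\; 3\binom{m-2}{k-1}.
\]
Since $g \in A_m$ is a non-identity element of $A_{m,k}$, this yields $\mu(A_{m,k}) \leq 3\binom{m-2}{k-1}$ as required.

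There is essentially no obstacle here; the only mild care needed is to handle the boundary case $k=1$ (where the second binomial vanishes and the count reduces to $3 = 3\binom{m-2}{0}$) and to note that the restriction $m > 2$ is exactly what guarantees the $3$-cycle $(1,2,3)$ exists in $A_m$. All remaining verification is the single application of Pascal's identity displayed above.
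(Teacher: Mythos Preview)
Your proof is correct and follows essentially the same approach as the paper: both exhibit the $3$-cycle $(1,2,3)\in A_m$, count the moved $k$-subsets according to whether $|S\cap\{1,2,3\}|$ equals $1$ or $2$, and combine the two terms via Pascal's identity. Your exposition is slightly more explicit about the fixed-subset characterization and the $k=1$ boundary case, but the argument is the same.
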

	\begin{proof}
		Take the $3$-cycle $(123)$. This moves set of the form $\{a,x_1,\dots,x_{k-1}\}$ or set of the form $\{a,b,x_1,\dots,x_{k-2}\}$ for $a,b \in \{1,2,3\}$ and $x_i \notin \{1,2,3\}$, for $i=1,\dots,k-1$. In the first case, we have $3$ choices for $a$, and ${m-3 \choose k-1}$ choices for $x_1,\dots,x_{k-1}$, and in the second case we have $3$ choices for $a$ and $b$, and ${m-3 \choose k-2}$ choices for $x_1,\dots,x_{k-2}$. In total, we have 
		\[
			3{m-3 \choose k-1}+3{m-3 \choose k-2} = 3{m-2 \choose k-1}. 
		\]
		Now the conclusion follows by $\mu(A_{m,k},\Omega_k) \leq |\mathrm{supp}(x)|$ for any non trivial $x \in A_m$.
	\end{proof}
	\begin{lemma}\label{lemmaMinDegWrPr}
		Let $H \leq \mathrm{Sym}(\Gamma)$ and $K \leq \mathrm{Sym}(\Lambda)$ be two permutation groups, and consider $G = H \wr K$ in its product action on $\Omega = \Gamma^k$, where $k = |\Lambda|$. Then,
		\[
			\mu(H\wr K,\Omega) \leq \mu(H,\Gamma)|\Gamma|^{k-1}.
		\] 
	\end{lemma}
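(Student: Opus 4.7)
The plan is to exhibit an explicit non-identity element of $G$ whose support has the desired size, and then invoke the definition of minimal degree.

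First, I would fix an element $h\in H$ with $|\mathrm{supp}(h)|=\mu(H,\Gamma)$, which exists since $\mu(H,\Gamma)$ is by definition realised by some non-trivial $h\in H$. I then consider the element $g=(h,1,\ldots,1)\in H^k\le G$, sitting in the base group of the wreath product and acting only on the first coordinate. Since $h\neq 1$ the element $g$ is non-trivial, so its support provides an upper bound on $\mu(G,\Omega)$.

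Next I would compute $\mathrm{supp}(g)$ directly from the product action. A point $(x_1,\ldots,x_k)\in\Gamma^k$ is fixed by $g$ if and only if $x_1^h=x_1$, i.e.\ $x_1\notin\mathrm{supp}(h)$, while the remaining coordinates $x_2,\ldots,x_k$ are unconstrained. Hence the set of moved points is $\mathrm{supp}(h)\times\Gamma^{k-1}$, which has cardinality $|\mathrm{supp}(h)|\cdot|\Gamma|^{k-1}=\mu(H,\Gamma)\,|\Gamma|^{k-1}$. This gives $\mu(G,\Omega)\le|\mathrm{supp}(g)|=\mu(H,\Gamma)\,|\Gamma|^{k-1}$, as required.

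There is really no obstacle here: the bound follows from a single well-chosen element of the base group, and the computation of its support is immediate from the definition of the product action. The only minor point worth stating explicitly is that $g\neq 1$, which is ensured by the choice of $h$ as a minimal-degree element of $H$.
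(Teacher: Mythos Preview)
Your proof is correct and follows exactly the same approach as the paper: choose $h\in H$ realising $\mu(H,\Gamma)$, embed it as $(h,1,\ldots,1)$ in the base group, and count the moved points in $\Gamma^k$. Your write-up is in fact slightly more careful than the paper's, since you explicitly note that $g\ne 1$ and describe the support as $\mathrm{supp}(h)\times\Gamma^{k-1}$.
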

	\begin{proof}
		Take $h \in H$ such that $|\mathrm{supp}(h)|=\mu(H,\Lambda)$, and consider $x=(h,1,\dots,1) \in G$. This moves points of $\Omega$ of the form $(x,y_1,\dots,y_{k-1})$, for $x \in \mathrm{supp}(h)$, $y_i \in \Gamma$ for $i=i,\dots,k-1$, and the result follows.
	\end{proof}
	We are now ready to prove Theorem \ref{mainthm:1} for large-base groups.
	\begin{proposition}
		Let $G$ be a large-base group, so that $(A_{m,k})^r \trianglelefteq G \leq S_{m,k} \wr S_r$, with $1 \leq k \leq m/2$. Setting $n = {m \choose k}^r$, we have
		\[
			\mu(G)b(G) \leq n \log n
		\]
	\end{proposition}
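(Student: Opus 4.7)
The plan is to combine a uniform upper bound on $\mu(G)$ coming from Lemmas~\ref{lemmaAltSubsets} and \ref{lemmaMinDegWrPr} with a wreath-product bound on $b(G)$ coming from inequality~(\ref{basesizewrpr}), and then to verify the resulting numerical inequality.

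For the minimal degree, since $(A_{m,k})^r\trianglelefteq G$, we have $\mu(G)\le\mu((A_{m,k})^r)$. Viewing the direct product as $A_{m,k}\wr \{1\}$ with the trivial group of degree $r$ on the top, Lemma~\ref{lemmaMinDegWrPr} yields $\mu((A_{m,k})^r)\le \mu(A_{m,k})\binom{m}{k}^{r-1}$, and Lemma~\ref{lemmaAltSubsets} gives
\[
	\mu(G)\le 3\binom{m-2}{k-1}\binom{m}{k}^{r-1}.
\]
Using the identity $\binom{m-2}{k-1}/\binom{m}{k}=k(m-k)/(m(m-1))$, this rewrites as $\mu(G)/n\le 3k(m-k)/(m(m-1))$. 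For the base size, since $G\le S_{m,k}\wr S_r$ in the same action, $b(G)\le b(S_{m,k}\wr S_r)$, and applying~(\ref{basesizewrpr}) gives
\[
	b(G)\le \left\lceil \frac{\lceil\log r\rceil}{\lfloor\log\binom{m}{k}\rfloor}\right\rceil+b(S_{m,k}).
\]
For $b(S_{m,k})$ itself, I use $b(S_m)=m-1$ when $k=1$, and a standard logarithmic bound $b(S_{m,k})=O(\log m)$ when $k\ge 2$ (analogous to the bounds on partition actions exploited in Section~\ref{sec1:sub1}).

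Combining the two bounds, the claim $\mu(G)b(G)\le n\log n$ reduces to
\[
	\frac{3k(m-k)}{m(m-1)}\Bigl(b(S_{m,k})+\bigl\lceil \log r/\log \binom{m}{k}\bigr\rceil\Bigr)\le r\log\binom{m}{k},
\]
which I would verify in two subcases. For $k\ge 2$ the prefactor $3k(m-k)/(m(m-1))$ is bounded by $3/2$, while $\log\binom{m}{k}\ge k\log(m/k)$ is already comparatively large and $b(S_{m,k})$ is only logarithmic in $m$, so the inequality is easy asymptotically, leaving a short list of sporadic $(m,k,r)$ to be checked by hand. For $k=1$ the inequality becomes $3(m-1)(m-1+\lceil\log r/\log m\rceil)/m\le r\log m$; this is routine for $r\ge 2$, while when $r=1$ we have $G\in\{A_m,S_m\}$ in its natural action, and the sharper exact values $\mu(S_m)=2$, $\mu(A_m)=3$, $b(S_m)=m-1$, $b(A_m)=m-2$ yield $\mu(G)b(G)\le 3m-6\le m\log m$ for all $m\ge 3$.

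The main obstacle is the regime of small $m,k,r$, where the asymptotic slack disappears and the bound $\mu(A_{m,k})\le 3\binom{m-2}{k-1}$ loses a non-negligible constant factor; these cases will be disposed of either by the sharper exact values of $\mu$ and $b$ on $A_m$ and $S_m$, or by a direct finite check in \textrm{GAP}.
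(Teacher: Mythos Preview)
Your bound on $\mu(G)$ is the same as the paper's and is fine. The genuine gap is your claimed ``standard logarithmic bound $b(S_{m,k})=O(\log m)$ when $k\ge 2$'': this is false. Already for $k=2$ one has
\[
b(S_{m,2})\ \ge\ \frac{\log(m!)}{\log\binom{m}{2}}\ \sim\ \frac{m\log m}{2\log m}\ =\ \frac{m}{2},
\]
so $b(S_{m,2})$ is linear in $m$, not logarithmic. More generally $b(S_{m,k})$ is of order $m\log m/(k\log(m/k))$; this is exactly why the groups $S_{m,k}\wr S_r$ are singled out as \emph{large-base} in the first place --- they are the exceptions to the $b(G)\le\log n+2$ theorem you invoke elsewhere. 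Since your subsequent ``easy asymptotically'' verification for $k\ge 2$ rests on this false input, the argument as written does not go through.

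The paper's proof uses the same $\mu$-bound but handles $b(G)$ differently. Rather than splitting $b(G)\le b(S_{m,k})+\lceil\log r/\log\binom{m}{k}\rceil$ via~(\ref{basesizewrpr}) and then bounding $b(S_{m,k})$, it applies the Halasi--Liebeck--Mar\'oti inequality $b(G)\le 2\log|G|/\log n+22\le 24\log|G|/\log n$ directly to the whole wreath product, using $|G|\le (m!)^r r!\le m^{mr}r^r$. This yields
\[
\mu(G)b(G)\ \le\ 72\,\frac{k(m-k)}{m(m-1)}\,n\cdot\frac{mr\log m+r\log r}{\log n},
\]
and the remaining task is the purely numerical inequality $72k\bigl(r\log m+r\log r/m\bigr)\le \log^2 n$, which the paper verifies by a monotonicity argument in $m,k,r$ once $m\ge 20$ and $r\ge 40$, with the finitely many remaining $(m,k,r)$ checked by machine. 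Your strategy can be repaired by replacing the false $O(\log m)$ claim with the correct HLM-type bound $b(S_{m,k})\le C\log(m!)/\log\binom{m}{k}$; after that substitution your reduced inequality becomes essentially the same numerical inequality the paper proves.
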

	\begin{proof}
	
		Denote with $\Omega_k$ the collection of $k$-subsets of $\{1,\dots,m\}$. By Lemma~\ref{lemmaAltSubsets} and Lemma~\ref{lemmaMinDegWrPr}, we have
		\begin{equation}\label{ineqmindeg}
			\mu(G) \leq \mu((A_{m,r})^r) \leq \mu(A_{m,r}) |\Omega_k|^{r-1} \leq3 \frac{{m-2 \choose k-1}}{{m \choose k}} n = 3 \frac{k(m-k)}{m(m-1)}n.
		\end{equation} 
		
		Firstly, suppose that $m\leq 20$ and $r \leq 40$. In this case, we can bound the base size of $G$ using \cite{BOW10}, and obtain
		\[
			b(G) \leq \left \lceil \frac{\lceil \log r \rceil}{\lfloor \log m \rfloor} \right \rceil + b(S_{m,k}).
		\] 
		We can now use the main result of \cite{DVRD} to compute, with $\mathrm{GAP}$, the exact value of $b(S_{m,k})$. Using these two upper bound for the minimal degree and the base size, we checked with $\mathrm{GAP}$ that, for all the values of $m \leq 20$, $r \leq 40$ and $1\leq k \leq 10$, we have
		\[
			3 \frac{k(m-k)}{m(m-1)}n\left(\left \lceil \frac{\lceil \log r \rceil}{\lfloor \log m \rfloor} \right \rceil + b(S_{m,k})\right) \leq n \log n.
		\]
		We can now suppose that $m \geq 20$, $r \geq 40$. We now need to use a different upper bound for the base size. In particular, from \cite{HLM}, we have that $b(G) \leq 2\log |G|/\log n + 22\leq 24 \log |G| / \log n$. But $G \leq S_{m,k} \wr S_r$, so that $|G| \leq m!^r r! \leq m^{mr} r^r$, and so
		\begin{equation}\label{ineqbasesize}
				b(G) \leq 24\frac{mr\log m + r\log r }{\log n}.
		\end{equation}
		Thus, combining \ref{ineqmindeg} and \ref{ineqbasesize}, we have
		\[
			\mu(G)b(G) \leq  72 \frac{k(m-k)}{m(m-1)}n\left(\frac{mr\log m + r\log r }{\log n}\right).
		\]
		Observe that $(m-k)/(m-1) \leq 1$, so that
		\[
		 	72 \frac{k(m-k)}{m(m-1)}n\left(\frac{mr\log m + r\log r }{\log n}\right) \leq 72n\frac{k}{m}\left(\frac{mr\log m + r\log r }{\log n}\right).
		\]
		Thus, the inequality $\mu(G) b(G) \leq n\log n$ holds if
		\[
			 72n\frac{k}{m}\left(\frac{mr\log m + r\log r }{\log n}\right) \leq n\log n,
		\]
		which is, after some computations
		\begin{equation}\label{ineqLargBase1}
			72k\left(r\log m + r \frac{\log r}{m}\right) \leq \log^2 n.
		\end{equation}
		Now $n = {m \choose k}^r \geq (m/k)^{rk}$, so that $\log^2 n \geq r^2k^2 \log^2(m/k)$. To prove (\ref{ineqLargBase1}), and therefore to conclude the proof, it is sufficient to show that
		\[
			\log m + \frac{\log r}{m} \leq \frac{rk}{72}\log^2\left(\frac{m}{k}\right).
		\]
		To do this, consider the function 
		\[
			f(m,r,k) = \frac{rk}{72}\log^2\left(\frac{m}{k}\right) - \log m - \frac{\log r}{m},
		\]
		restricted to $m \geq 20$, $r \geq 40$, $1 \leq k \leq m/2$. We now show that $f$ is an increasing function in all the three variables, and then we conclude by noting that $f(20,40,1) \geq 0$.\\
		From now on, we consider the discrete variables $m,k,r$ as continuous, and we write $\ln$ to indicate the logarithm in base $e$. Setting $x=(m,k,r)$, we have
		\[
			\frac{\partial f(x)}{\partial m} = \frac{rk}{36m\ln^22}\ln \left(\frac{m}{k}\right) + \frac{\log r}{m^2}-\frac{1}{m\ln(2)}. 
		\]
		Thus, $\partial f(x)/\partial m \geq 0$ if and only if
		\begin{equation}\label{eqpart1}
			\frac{rkm}{36n\ln^22}\ln \left(\frac{m}{k}\right)-\frac{m}{\ln 2} \geq- \log r.
		\end{equation}		
		Now, $r \geq 40$, so that $-\log r < 0$. Thus, to show (\ref{eqpart1}) it is sufficient to show that
		\[
		\frac{rkm}{36m\ln^22}\ln \left(\frac{m}{k}\right)-\frac{m}{\ln 2} \geq 0,
		\]
		which is
		\[
			\frac{m}{\ln 2}\left(\frac{rk}{36\ln2}\ln\left(\frac{m}{k}\right)-1\right) \geq 0.
		\] 
		The first factor is of course positive. For the second one, we must show that 
		\[
			\ln \left(\frac{m}{k}\right) \geq \frac{36 \ln 2}{rk}.
		\]
		Now, $r \geq 40$ and $k \geq 1$, so that $(36 \ln 2)/rk \leq (36 \ln 2)/40$. But $k \leq m/2$, so that $m/k \geq 2$, and thus $\ln(m/k)\geq \ln 2$. Now the claim follows from $\ln2 \geq (36 \ln 2)/40$. Thus, $f$ is an increasing function on $m$.\\
		Consider now the variable $r$. We have
		\[
			\frac{\partial f(x)}{\partial r} = \frac{k}{72}\log^2\left(\frac{m}{k}\right)-\frac{1}{rm\ln2},
		\]
		so that $\partial f(x)/\partial r \geq 0$ if and only if
		\[
			km\log^2\left(\frac{m}{k}\right) \geq \frac{72}{r \ln 2}.
		\]
		Since $r \geq 40$, $72/(r\ln2)\leq 72/(40\ln 2)$. Moreover, since $m/k \geq 2$, $km \log^2(m/k) \geq km$, thus it is sufficient to show that 
		\[
			km \geq \frac{72}{40 \ln 2},
		\]
		which is trivially true.\\
		Finally, consider the variable $k$, and remark that $1 \leq k \leq m/2$. We have
		\[
			\frac{\partial f(x)}{\partial k} = \frac{r}{72}\log^2\left(\frac{m}{k}\right) - \frac{r}{36\ln^22}\ln\left(\frac{1}{k}\right).
		\]
		Thus, $\partial f(x)/\partial r\geq 0$ if and only if
		\[
			\log^2\left(\frac{n}{k}\right)\geq \frac{2}{\ln 2}\ln\left(\frac{1}{k}\right).
		\]
		But $\ln(1/k) \leq 0$, while $\log^2(n/k)\geq 1$, and so we are done.
	\end{proof}	
	
	\thebibliography{20}

	\bibitem{FINING}  J.~Bamberg, A.~Betten, P.~Cara,  J.~De Beule,  M.~Lavrauw, M.~Neunhoeffer,  and  M.~Horn, FinInG, Finite Incidence Geometry, Version 1.5.6 (2023)
	\bibitem{bhr}J.~N.~Bray, D.~F.~Holt, C.~M.~Roney-Dougal, \textit{The maximal subgroups of the low-dimensional finite classical groups}, Cambridge: Cambridge University Press, 2013.
	\bibitem{BGiu} T.~C.~Burness TC, M.~Giudici, Classical Groups, Derangements and Primes, Cambridge University Press; 2016.
	\bibitem{BG}T.~C.~Burness, R.~M.~Guralnick, \textit{Fixed point ratios for finite primitive groups and applications}, \textit{Advances in Mathematics}, \textbf{411}, (2022)
	\bibitem{BGS11}T.~C.~Burness, R.~M.~Guralnick, J.~Saxl, On base sizes for symmetric groups, \textit{Bull. Lond. Math. Soc. 43} (2011), 386--391.
	\bibitem{BLS09} T.~C.~Burness, M.~W.~Liebeck, A.~Shalev, Base sizes for simple groups and a conjecture of Cameron, \textit{Proc. Lond. Math. Soc. (3)} \textbf{98} (2009), 116--162.
	\bibitem{BOW10} T.~C.~Burness, E.~A.~O’Brien, R.~A.~Wilson, Base sizes for sporadic simple groups, \textit{Israel J. Math.} \textbf{177} (2010), 307--333.
	\bibitem{BS}	T.~C.~Burness, A.~Seress, On Pyber's base size conjecture \textit{Transactions of the American Mathematical Society}, 2015 367, 5633-5651.
	\bibitem{Bur07} T.~C.~Burness, On base sizes for actions of finite classical groups, \textit{J. Lond. Math. Soc. (2)} \textbf{75} (2007), 545--562.
	\bibitem{Bur2018}T.~C.~Burness, On base sizes for almost simple primitive groups, \textit{J. Algebra} \textbf{516} (2018), 38--74.
	\bibitem{Cam92}P.~J.~Cameron,``Some open problems on permutation group'' in Groups, Combinatorics and
	Geometry. Proceedings of the L.M.S. Durham Symposium, Held July 5--15, 1990 in Durham, UK,
	Cambridge University Press, Cambridge, MA, 1992, 340--350.
	\bibitem{PolarSpaces}P.~J.~Cameron, \textit{Projective and Polar Spaces}, University of London, Queen Mary and Westfield college, 1992.
	\bibitem{CK97}P.~J.~Cameron, W.~M.~Kantor, Random permutations: Some group-theoretic aspect in Combinatorics, Geometry and Probability. A Tribute to Paul Erdős. Proceedings of the Conference Dedicated to Paul Erd\H{o}s on the Occasion of His 80th Birthday, Cambridge, UK, 26 March 1993, Cambridge University Press, Cambridge, MA, 1997, 139--144.

	\bibitem{DVRD} C.~del Valle, C.~M.~Roney-Dougal, The base size of the symmetric group acting on subsets, \textit{Algebraic Combinatorics} Volume 7 (2024) no. 4, pp. 959-967.
	\bibitem{dixon_mortimer}J.~D.~Dixon,  B.~Mortimer, \textit{Permutation groups}, Graduate Texts in Mathematics \textbf{163}, Springer-Verlag, New York, 1996
	\bibitem{DHM} H.~Duyan, Z.~Halasi, A.~Maróti, A proof of the Pyber's base size conjecture, \textit{Advances in Mathematics}, 2018 Vol. 331, pp. 720-747.
	\bibitem{Faw} J.~B.~Fawcett, \text{The base size of a primitive diagonal group}, \textit{Journal of Algebra}, \textbf{375}, (2013)
	[GAP2024]
	\bibitem{GAP}The GAP Group, GAP -- Groups, Algorithms, and Programming, Version 4.13.1; 2024. (https://www.gap-system.org)
	\bibitem{HLM}Z.~Halasi, M.~W.~Liebeck, A.~Maróti, \textit{Base sizes of primitive groups: Bounds with explicit constants}, \textit{Journal of Algebra}, \textbf{521}, 2019
	\bibitem{KPS} J.~Kempe, L.~Pyber, A.~Shalev, Permutation groups, minimal degrees and quantum computing, \textit{Groups, Geometry, and Dynamics 1} (2006): 553-584.	
	\bibitem{Lieb}M.~W.~Liebeck, \textit{On minimal degrees and base sizes of primitive permutation groups}, \textit{Archivum Mathematicum}, \textbf{43}, (1984)
	\bibitem{MasS} F.~Mastrogiacomo, P.~Spiga, IBIS primitive groups of almost simple type, arXiv preprint arXiv:2406.13318 (2024).
	\bibitem{MRD} M.~Moscatiello, C.~Roney-Dougal, \text{Base sizes of primitive permutation groups}, \textit{Monatshefte für Mathematik}, \textbf{198}, (2022)
	\bibitem{LS99} M.~W.~Liebeck, A.~Shalev, Simple groups, permutation groups, and probability, \textit{J. Amer. Math. Soc.} \textbf{12} (1999), 497--520
	\bibitem{Pre} C. E. Preager, The inclusion problem for finite primitive permutation groups, \textit{Prod. Lond. Math. Soc} \textbf{60} (1990), 66-88.
	\bibitem{MS} J.~Morris, P.~Spiga, \textit{On the base size of the symmetric and the alternating group acting on partitions}, \textit{Journal of Algebra}, \textbf{587}, (2021)
	
\end{document}